\documentclass[a4paper,11pt]{article}

\usepackage{hyperref}

\usepackage{latexsym}
\usepackage{amsmath}
\usepackage{amsthm}
\usepackage{amscd}
\usepackage{amssymb}

\usepackage{url}

\usepackage{tikz}


\usepackage[mode=multiuser,status=draft,lang=english]{fixme}
\fxsetup{theme=colorsig}

\FXRegisterAuthor{K}{aK}{Keita}
\FXRegisterAuthor{S}{aS}{Silvia}


\usepackage[a4paper]{geometry}
\geometry{left=3cm,right=3cm,top=2.5cm} 

\def\RCAo{\mathsf{RCA_0}}
\def\RCAs{\mathsf{RCA^*_0}}
\def\WKLo{\mathsf{WKL_0}}
\def\WKL{\mathsf{WKL_0}}
\def\ACAo{\mathsf{ACA_0}}
\def\RT{\mathsf{RT}}

\def\E{\exists}
\def\A{\forall}
\def\N{\mathbb{N}}

\def\Seq{\mathrm{Seq}}

\def\lh{\mathrm{lh}}

\def\id{\mathop{\mathrm{id}}\nolimits}

\def\rest{{\upharpoonright}}

\newcommand{\rng}{\mathrm{rng}}
\newcommand{\ran}{\mathrm{rng}}

\def\PH{\mathrm{PH}}

\def\ADS{\mathrm{ADS}}
\def\CAC{\mathrm{CAC}}

\def\Ii{\mathrm{I}\Sigma_1}

\def\BII{\mathrm{B}\Sigma^0_2}
\def\II{\mathrm{I}\Sigma^0_1}


\newcommand\LAR{\mathrm{LRG}}

\newcommand\rank{\mathrm{rank}}

\newcommand\WPH{\mathrm{WPH}}
\newcommand\PHs{\mathrm{PH}^{*}{}}
\newcommand\WPHs{\mathrm{WPH}^{*}{}}
\newcommand\HCT{\mathrm{HCT}}
\newcommand\HCTo{\mathrm{HCT}^{\omega}}
\newcommand\WRT{\mathrm{WRT}}

\newcommand\Ack{\mathcal{F}}

\newcommand\TTo{\mathrm{TT}^{*}_{\omega}}
\newcommand\Tot{\mathrm{Tot}}

\newcommand\TT{\mathrm{TT}}

\newcommand\HCToh{\mathrm{HCT}_{b}^{*}}
\newcommand\TToh{\mathrm{TT}_{b}^{*}}
\newcommand\HCToH{\mathrm{HCT}_{H}^{*}}
\newcommand\TToH{\mathrm{TT}_{H}^{*}}
\newcommand\TRT{\mathrm{TRT}^2_2}

\DeclareMathOperator{\tP}{\mathcal{P}}

\newcommand{\ap}[1]{\langle #1 \rangle}
\newcommand{\bp}[1]{\left\lbrace #1 \right\rbrace}

\newcommand\acc{\mathrm{Acc}}

\newcommand{\vgt}[1]{``#1''}

\newtheorem{thm}{Theorem}[section]
\newtheorem{theorem}[thm]{Theorem}

\newtheorem{claim}{Claim}[thm]
\newtheorem*{claim*}{Claim}
\newtheorem{prop}[thm]{Proposition}
\newtheorem{lem}[thm]{Lemma}
\newtheorem{cor}[thm]{Corollary}
\newtheorem{proposition}[thm]{Proposition}
\newtheorem{lemma}[thm]{Lemma}
\newtheorem{corollary}[thm]{Corollary}

\theoremstyle{definition}
\newtheorem{defi}{Definition}[section]
\newtheorem{definition}[defi]{Definition}

\newtheorem{remark}[thm]{Remark}

\newtheorem{question}[defi]{Question}
\newtheorem{example}[defi]{Example}

\title{Reverse mathematical bounds for the Termination Theorem}
\author{
Silvia Steila and
Keita Yokoyama%
\footnote{
e-mail: {\sf y-keita@jaist.ac.jp}
}
\footnote{This work is partially supported by JSPS Grant-in-Aid for Research Activity Start-up grant number 25887026.}
}
\date{\today}

\begin{document}
\maketitle

\begin{abstract}
In 2004 Podelski and Rybalchenko expressed the termination of transition-based programs as a property of well-founded relations. The classical proof by Podelski and Rybalchenko requires Ramsey's Theorem for pairs which is a purely classical result, therefore extracting bounds from the original proof is non-trivial task.

Our goal is to investigate the termination analysis from the point of view of Reverse Mathematics. By studying the strength of Podelski and Rybalchenko's Termination Theorem we can extract some information about termination bounds.
\end{abstract}

\section{Introduction}
In \cite{Podelski} Podelski and Rybalchenko characterized the termination of transition-based programs as a property of well-founded relations. Their result may be stated as follows: a binary relation $R$ is well-founded if and only if there exist a natural number $k$ and $k$-many well-founded relations whose union contains the transitive closure of $R$. The classical proof of Podelski and Rybalchenko's Termination Theorem (just Termination Theorem for short) requires Ramsey's Theorem for pairs. Although Ramsey's Theorem for pairs is a purely classical result, the Termination Theorem can be intuitionistically proved by using some intuitionistic version of Ramsey providing and providing to consider the intuitionistic notion of well-foundedness. In 2012 Vytiniotis, Coquand and Wahlstedt proved an intuitionistic version of the Termination Theorem by using the Almost-Full Theorem \cite{CoquandStop}, while in 2014 Stefano Berardi and the first author proved it by using the $H$-closure Theorem \cite{Hclosure}. The $H$-closure Theorem arose by the combinatorial fragment needed to prove the Termination Theorem (see Section \ref{Section: Ramsey}, \ref{Section: TTandHc}).

The goal of this paper is to study the $H$-closure Theorem and the Termination Theorem from the viewpoint of Reverse Mathematics, in order to extract bounds for termination. The first question is whether the $H$-closure Theorem and the Termination Theorem are equivalent over $\RCAo$ to Ramsey's Theorem for pairs. Due to our analysis we answer to  \cite[Open Problem 2]{Gasarch} posed by Gasarch: finding a natural example showing that the Termination Theorem requires the full Ramsey Theorem for pairs. In this paper we prove that such program cannot exists. We also answer negatively to \cite[Open Problem 3]{Gasarch} posed by Gasarch: is the Termination Theorem equivalent to Ramsey's Theorem for pairs? 

In \cite{Schmitz} Figueira et al. gave a deeper analysis of the Termination Theorem by using Dickson's Lemma\footnote{Dickson's Lemma states that $(\N^k, \leq)$ (where ${\leq}$ is the componentwise order) is a well-quasi order \cite{Kruskal, Milner}; i.e. every infinite sequence $\sigma$ of elements of $\N^k$ is such that there exists $n < m$ with $\sigma(n) \leq \sigma(m)$.}. In fact the Termination Theorem is a consequence of Dickson's Lemma by observing that any relation is well-founded if and only if it is embedded into a well-quasi-ordering. However this property of well-quasi-orderings is equivalent to $\ACAo$ over $\RCAo$ and therefore in order to analyse the strength of the Termination Theorem we need a different point of view. 

In Section \ref{Section: zoo} we prove that the Termination Theorem is equivalent over $\RCAo$ to a weak version of Ramsey's Theorem for pairs. As a corollary of this result we have that for any natural number $k$, $\CAC$ (the Chain-AntiChain principle) is stronger than the Termination Theorem for $k$ many relations, which is the statement: given a relation $R$, if there exist $k$-many well-founded relations $R_0, \dots, R_{k-1}$ such that $R_0 \cup \dots \cup R_{k-1} \supseteq R^+$ then $R$ is well-founded. Therefore we get answers to \cite[Open Problem 2, Open Problem 3]{Gasarch}.

These results can be used to characterize the programs proved to be terminating by the Termination Theorem: our goal is to extract a time bound for such a program by using reverse mathematics tools. Assume that $R$ is the binary transition relation of some program. We say that a function $f: S \to \N$ is a bound for the relation $R$ on $S$, if any $R$-decreasing sequence starting from an element $a\in S$ is shorter than $f(a)$. By using \cite{prov-rec-func, Paris-Kirby, CSY2012} it is known that the class provably recursive functions of $\WKLo+\CAC$ is exactly the same as the class of primitive recursive functions. Hence given any binary relation $R$ generated by a primitive recursive transition function, we conclude that if there exist $k$-many relations $R_1 \cup \dots \cup R_{k-1} \supseteq R^+$  with primitive recursive bounds, then the program has a primitive recursive bound. The proof is in Section \ref{Section: application}. 

In order to provide more precise termination bounds, in Section \ref{Section: boundedtermination} we study the reverse mathematical strength of some bounded versions of both the $H$-closure Theorem and of the Termination Theorem. Differently from the full case, in the restricted ones they turn out to be equivalent. Moreover we prove they are equivalent to a weaker version of the Paris Harrington Theorem \cite{PH}. 

A natural question which arise is: is there a correspondence between the complexity of a primitive recursive transition relation and the number of relations which compose the transition invariant? Thanks to our analysis and by using the relationship between Paris Harrington Theorem and the Fast-Growing Hierarchy in Section and \ref{Section: boundedbounds} we provide results in this directions.

Finally, in Section \ref{Section: iteratedversion}, we focus on the case of iterated applications of the Termination Theorem.

\section{Ramsey's Theorem in reverse mathematics}\label{Section: Ramsey}

Ramsey's Theorem, and in particular Ramsey's Theorem for pairs in two colors, is a central central argument of study in Reverse Mathematics. In this section we summarize some main facts about the strength of Ramsey's Theorem and of some of its corollaries we will use in this paper. Let $k$ be a natural number. Ramsey's Theorem for pairs in $k$ colors guarantees that for any coloring in $k$-many colors over the edges of a complete graph with countably many nodes, there exists an infinite homogeneous set; i.e. there exists an infinite subset $H$ of the vertices such that any two elements of $H$ are connected in the same color. 

A $n$-regular hypergraph is a set $X$ equipped with some set of subsets of $X$ having cardinality $n$. A $1$-regular hypergraph defines a subset of $X$ and a $2$-regular hypergraph defines a graph in which no element is related to itself. A $3$-regular hypergraph is a set with some triangular connection on its elements. Ramsey proved that his theorem holds also for $n$-regular hypergraphs, that is: given any set $X$ and any assignment of colors to all subsets of cardinality $n$, there is some infinite subset $H$ whose subset of cardinality $n$ all have the same color. Thus we can formally state Ramsey's Theorem for $n$-regular hypergraphs in $k$ colors as follows. Given a set $X$ we let $[X]^n$ denote the $n$-regular complete hypergraphs on $X$, i.e. $\bp{Y\subseteq X : |Y|=n}$.

\begin{definition} For given $k, n \in \N$, $k \geq 2$ we define the following statements.
\begin{description}
\item[$\RT^n_k$.] For any $P:[\N]^n \to k$, there exists an infinite set $H \subseteq \N$ such that $|P[[H]^n]|=1$. 
\item[$\RT^n$.] $\forall k \in \N \ \RT^n_k$.
\end{description}
\end{definition}

For any natural numbers $n$ and $k \geq 2$ it is straightforward to show directly both $\RT^n_{k+1}$ from $\RT^n_k $, and $\RT^n_2$ from $\RT^{n+1}_2$ in $\RCAo$.
%
%
Moreover, thanks to \cite{Jockusch,Specker,CJS}, it is well-known that in $\RCAo$ the situation is the following: 

\[
	\RT^1_2 < \RT^1 <  \RT^2_2 < \RT^2 < \RT^3_2 = \ACAo = \RT^3 = \dots = \RT^n  < \forall n \RT^n.
\]

Ramsey's Theorem for pairs in two colors has been proved to be not equivalent to any of the \vgt{Big Five} principles of Reverse Mathematics \cite{CJS, Liu}.\footnote{As remarked by Alexander Kreuzer there are other theorems know to be not equivalent to the Big Five before Ramsey's Theorem for pairs. For instance the Infinite Pigeonhole Principle \cite{Hirst} and the Weak Weak K\"{o}nig Lemma \cite{SimpsonYu}.} Moreover there exists a zoo of consequences of Ramsey's Theorem for pairs (from now on just Ramsey's Theorem). In this work we mainly focus on $\WRT$ (Weak Ramsey's Theorem) \cite{KeitaLNCS} which states that any coloring over the edges of a complete graph with countably many nodes admits an infinite homogeneous sequence. In order to analyse it we need to recall some more famous principles: $\CAC$ and $\ADS$. A chain is a totally ordered subset, and an antichain is a subset of pair-wise unrelated elements.

\begin{definition} Let $k \in \N$.
\begin{description}
\item[$\WRT^2_k$.] For any $P:[\N]^2 \to k$, there exist $c \in k$ and an infinite set $H=\bp{x_i : i \in \N} \subseteq \N$ such that for any $i\in\N$ $P(\bp{x_i,x_{i+1}})=c$.
\item[$\CAC$.] Every infinite poset has an infinite chain or antichain.
\item[$\ADS$.] Every infinite linear ordering has an infinite ascending or descending sequence.
\end{description}
\end{definition}

As shown in \cite{Hirschfeldt}, $\ADS$ is equivalent to the Transitive Ramsey Theorem for pairs ($\TRT$), stated below. A coloring $P: [\N]^2 \to k$ is said to be transitive if for any $x,y,z\in \N$,
\[
	P(\bp{x,y}) = P(\bp{y,z}) \implies P(\bp{x,z}) = P(\bp{x,y}). 
\]
The Transitive Ramsey Theorem for pairs states that every transitive coloring $P: [\N]^2 \to k$  admits an infinite homogeneous set.  It is straightforward to prove directly that $\CAC$ implies $\ADS$ in $\RCAo$. Moreover for any $k \in \N$, $\WRT^2_k$ lies between them.

\begin{proposition}[$\RCAo$]\label{Proposition: WRT-CAC-ADS} Let $k$ be a natural number. Then 
\begin{enumerate}
\item $\WRT^2_2 = \ADS$;
\item $\CAC \implies \WRT^2_k$.
\end{enumerate}
\end{proposition}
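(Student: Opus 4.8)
The plan is to prove the two items separately: item (1) via the equivalence $\ADS=\TRT$ recalled above, and item (2) by an induction on the number of colours driven directly by $\CAC$.

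For the direction $\WRT^2_2\Rightarrow\ADS$ I would code a given infinite linear order $\prec$ on $\N$ by the colouring $P_\prec:[\N]^2\to 2$ that, for $x<y$ in the natural order, sets $P_\prec(\{x,y\})=1$ if $x\prec y$ and $P_\prec(\{x,y\})=0$ if $y\prec x$. Feeding $P_\prec$ to $\WRT^2_2$ returns a colour $c$ and an increasing sequence $x_0<x_1<\cdots$ with $P_\prec(\{x_i,x_{i+1}\})=c$ for all $i$; reading off the definition, $c=1$ gives an ascending sequence $x_0\prec x_1\prec\cdots$ and $c=0$ a descending one. No transitivity is used here.

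For the converse $\ADS\Rightarrow\WRT^2_2$ I would work through $\TRT$. A first useful observation is that on a transitive colouring consecutive-monochromaticity already propagates to full homogeneity, so $\WRT^2_2$ and $\TRT$ coincide on transitive colourings; the whole difficulty is to treat an arbitrary, non-transitive $P:[\N]^2\to 2$. The stable case is within reach: if $\lim_y P(\{x,y\})$ exists for every $x$, then $\BII$ (a consequence of $\ADS$) yields an infinite $A$ and a colour $c$ with this limit equal to $c$ on $A$, and one threads an increasing sequence through $A$ by always picking the next point far enough out to see colour $c$. The main obstacle is the passage from the general case to something the ascending/descending machinery can settle, because the obvious reductions overshoot: asking for a chain or an antichain in the colour-$0$ reachability order (put $x\sqsubset y$ when some increasing colour-$0$ path joins $x$ to $y$) is exactly $\CAC$, which is strictly stronger than $\ADS$, while turning a height function into a predecessor forest and extracting an infinite branch invokes K\"onig's Lemma and hence $\ACAo$. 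The heart of the proof is therefore to match the stable and cohesive components of $\WRT^2_2$ with those of $\ADS$ so that only the linear strength of $\ADS$ is consumed.

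Item (2) I would prove by induction on $k$, the case $k=1$ being trivial. For the inductive step, given $P:[\N]^2\to k$ define $\sqsubseteq$ on $\N$ by $x\sqsubset y$ iff $x<y$ and there is $x=z_0<z_1<\cdots<z_m=y$ with $P(\{z_j,z_{j+1}\})=0$ throughout. Since any witnessing path lies in the finite interval $[x,y]$, this relation is $\Delta^0_1$, and it is clearly a partial order refining the natural order, hence a legitimate poset for $\CAC$. An infinite chain $c_0\sqsubset c_1\sqsubset\cdots$ is then automatically increasing in the natural order, and concatenating the colour-$0$ paths that witness $c_i\sqsubset c_{i+1}$ produces an increasing sequence whose consecutive pairs are all coloured $0$; this is the desired set with $c=0$. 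An infinite antichain $A$, on the other hand, can contain no colour-$0$ edge, so $P$ restricted to $[A]^2$ omits colour $0$ and hence uses only $k-1$ colours; reindexing $A$ as $\N$ and applying the induction hypothesis $\WRT^2_{k-1}$ finishes the step. I expect the hard direction of (1) to be the real obstacle, precisely because, unlike (2), it cannot afford the chain/antichain dichotomy and must instead decompose $\WRT^2_2$ along the same stable/cohesive lines as $\ADS$.
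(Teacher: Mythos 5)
Your item (2) and the direction $\WRT^2_2\Rightarrow\ADS$ are correct and essentially identical to the paper's argument: the paper also passes to the partial order ``$n\prec m$ iff $n<m$ and there is an increasing colour-$0$ path from $n$ to $m$'' (phrased there via an auxiliary colouring $P^*$), uses a chain to concatenate colour-$0$ paths and an antichain to drop to $k-1$ colours, and runs an external induction on $k$ (which is why the paper remarks one cannot conclude $\CAC\Rightarrow\forall k\,\WRT^2_k$ from this proof).

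The genuine gap is the direction $\ADS\Rightarrow\WRT^2_2$: you correctly diagnose that the naive reductions overshoot to $\CAC$ or $\ACAo$, but you then leave the key step as an unexecuted plan (``match the stable and cohesive components''), so no proof is actually given. The idea you are missing is a \emph{transitive collapse of $P$ itself}, not a stability analysis. The paper defines, by recursion on $y-x$, a colouring $P^*$ with $P^*(\bp{x,x+1})=P(\bp{x,x+1})$ and $P^*(\bp{x,y})=P^*(\bp{x,z})$ whenever some $x<z<y$ satisfies $P^*(\bp{x,z})=P^*(\bp{z,y})$ (default $P(\bp{x,y})$ otherwise); a short argument shows this is well defined (two conflicting splittings $z<z'$ would force a contradiction via the colour of $\bp{z,z'}$), $\Delta^0_1$, and transitive. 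Now $\TRT$ --- i.e.\ exactly the linear strength of $\ADS$ --- applies directly to $P^*$ and yields an infinite $P^*$-homogeneous set $H^*$ in some colour $c$; unfolding the recursion, $P^*(\bp{x,y})=c$ guarantees an increasing $P$-colour-$c$ path from $x$ to $y$, so splicing canonical (lexicographically least shortest) witnessing paths between consecutive elements of $H^*$ produces a $\Delta^0_1$-definable infinite sequence whose consecutive pairs are $P$-coloured $c$, which is precisely a witness for $\WRT^2_2$. This is the same ``colour-$c$ reachability'' device you already deploy in item (2), but packaged as a single transitive $2$-colouring so that only $\TRT$, and not $\CAC$, is consumed; without it (or an equivalent), your proof of item (1) is incomplete.
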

\begin{proof}
\begin{enumerate}
\item $\WRT^2_2 \implies \ADS$. Let $(\N,\prec)$ be an infinite linear ordering. Let $P:[\N]^2 \to 2$ be such that $P(\bp{x,y})=0$ if and only if $x \prec y$. Then, by $\WRT^2_2$ there exists an infinite homogeneous sequence.  If the sequence is in color $0$ we have an infinite increasing sequence; otherwise since the order is total we have an infinite decreasing sequence. 

$\ADS \implies \WRT^2_2$. Given $P:[\N]^2 \to 2$, let $P^*:[\N]^2 \to 2$ defined by induction over $|x-y|$ as follows
\begin{align*}
P^*(\bp{x,x+1}) &= P(\bp{x,x+1});\\
P^*(\bp{x,y}) &= 
\begin{cases}
P^*(\bp{x,z}) &\mbox{if } \exists z<y (x<z \wedge P^*(\bp{x,z})=P^*(\bp{z,y}))\\
P(\bp{x,y}) &\mbox{otherwise}.
\end{cases}
\end{align*}

A priori, the definition $P^*(\bp{x,y})=P^*(\bp{x,z})$ could assign more than one value to $P^*(\bp{x,y})$. However,
$P^*$ is well-defined since if there exist $x<z < z'<y$ such that 
\[
	P^*(\bp{x,z}) = P^*(\bp{z,y}) \neq P^*(\bp{x,z'}) = P^*(\bp{z',y})
\]
then either $P^*(\bp{z,z'})=P^*(\bp{x,z})$ or $P^*(\bp{z,z'})=P^*(\bp{z',y})$. If $P^*(\bp{z,z'})= P^*(\bp{x,z})$ by definition we get $P^*(\bp{x,z'}) = P^*(\bp{x,z})$, and this is a contradiction. Otherwise,
if $P^*(\bp{z,z'})= P^*(\bp{z',y})$, then $P^*(\bp{z,y}) = P^*(\bp{z',y})$ and this is once again a contradiction. It is straightforward to prove that $P^*$ is transitive. Since we assumed $\ADS$, by \cite{Hirschfeldt} we can use the Transitive Ramsey Theorem for pairs in two colors to derive the existence of an infinite homogeneous set $H^*$ for $P^*$ in color $c$. From $H^*$ we obtain the infinite homogeneous sequence $H$ for $P$. In fact let $\bp{x_n : n \in \omega}$ be an increasing enumeration of $H^*$. Define $H$ by: $x \in H$ if and only if $\exists n, m < x$ such that $x$ belongs to the minimum, with respect to the lexicographical order, of the shortest paths in color $c$ from $x_n$ to $x_m$. 

\item By induction over $k$. First of all we prove that $\RCAo \vdash \CAC \implies \WRT^2_2$. Given $P: [\N]^2 \to 2$, let $P^*$ as above and define define the poset $(\N, \prec)$, where
\[ 
	n \prec m \ \iff \ n < m \ \wedge \ P^*(\bp{n,m})=0.
\]
By $\CAC$ we have that there exists either an infinite chain or an infinite antichain. In the first case and since $\N$ is well-founded we obtain an infinite sequence in color $0$. In the second one we have an infinite homogeneous set in color $1$. Anyway we are done.

Assume now that $\RCAo \vdash \CAC \implies \WRT^2_k$ in order to prove that $\RCAo \vdash \CAC \implies \WRT^2_{k+1}$. Assume that $P: [\N]^2 \to k+1$ is given and define the poset $(\N, \prec)$ as above. Again by $\CAC$ we have that there exists either an infinite chain or an infinite antichain. In the former case we obtain an infinite sequence in color $0$ as well. In the latter one we have an infinite set $X$ whose nodes cannot be connected in color $0$.  Since $X$ is infinite, there exists a bijection between $X$ and $\N$. Therefore by applying ($\CAC$ and) the inductive hypothesis to $P \rest [X]^2$, we obtain an infinite homogeneous sequence. \qedhere
\end{enumerate}
\end{proof}

Hence we have the following situation
\[
	 \RCAo < \ADS = \TRT = \WRT^2_2 \leq  \WRT^2_3 \leq \dots \leq \WRT^2_k \leq \CAC < \RT^2_2 = \dots = \RT^2_k.
\] 
The equivalence between $\ADS$ and $\WRT^2_2$ was found independently by the first author and Patey \cite{Patey}. Since Lerman, Solomon and Towsner in \cite{LermanSolomonTowsner} proved that $\ADS < \CAC$ we get that at least one of the previous inequalities is strict. Apparently Patey \cite{Patey} separates $\CAC$ from $\WRT^2_k$. As far as we know the equivalence between $\WRT^2_k$ and $\WRT^2_{k+1}$ is still open.  The separation between $\CAC$ and $\RT^2_2$ was firstly proved by Hirschfeldt and Shore in \cite{Hirschfeldt}. 

Observe that, since the proof of Proposition \ref{Proposition: WRT-CAC-ADS}.2 is by induction over $k$, we cannot conclude from this proof that $\CAC \geq \forall k \WRT^2_k$ in $\RCAo$.

\section{Podelski and Rybalchenko's termination}\label{Section: TTandHc}

In \cite{Podelski} Podelski and Rybalchenko expressed the termination of transition-based programs as a property of well-founded relations. They said that a program $\tP$ is terminating if its transition relation restricted to the accessible states is well-founded. By using this definition they proved their main theorem which states that a program $P$ is terminating if it has a disjunctively well-founded transition invariant.
In this section we firstly introduce the Termination Theorem, then we analyse the notion of inductive well-foundedness in order to present the $H$-closure Theorem \cite{Hclosure}.

\subsection{Transition Invariants}
In this subsection we recall the definition of transition invariant and the Termination Theorem. For details we refer to \cite{Podelski}.

\begin{defi} As in \cite{Podelski}:
	\begin{itemize}
		\item A transition-based program $\tP = (S,I,R)$ consists of:
		\begin{itemize}
			\item $S$: a set of states,
			\item $I$: a set of initial states, such that $I \subseteq S$,
			\item $R$: a transition relation, such that $R \subseteq S \times S$.
		\end{itemize} 
		\item A computation is a maximal sequence of states $s_0,\ s_1,\ \dots$ such that
		\begin{itemize}
			\item $s_0\in I$,
			\item $(s_{i+1}, s_{i})\in R$ for any $i \in \N$.
		\end{itemize}
		\item The set $\acc$ of accessible states is the set of all the states which appear in some computation.
		\item The transition-based program $\tP$ is terminating if and only if $R \cap (\acc \times \acc)$ is well-founded.
		\item A transition invariant $T$ is a superset of the transitive closure of the transition relation $R$ restricted to the accessible states $\acc$. Formally,
				\[
					R^+ \cap (\acc \times \acc) \subseteq T.
				\]
		\item A relation $T$ is disjunctively well-founded if it is a finite union of well-founded relations; i.e. $T = T_0 \cup \dots \cup T_{k-1}$ where for any $i \in k$, $T_i$ is well-founded.
	\end{itemize}
\end{defi}

In this work each state is represented by a finite sequence $s$ which contains the values of the variables and the location of $s$ (the address of the instruction being executed). By using these definitions we can state the main result by Podelski and Rybalchenko. 

\begin{theorem}[Termination Theorem, Theorem 1 \cite{Podelski}] \label{Theorem: Podelski}
The transition-based program $\tP$ is terminating if and only if there exists a disjunctively well-founded transition invariant for $\tP$.
\end{theorem}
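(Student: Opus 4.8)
The plan is to prove the two implications separately, noting that the forward direction is essentially routine while the reverse direction is where the combinatorial content — Ramsey's Theorem for pairs — enters; this is what makes the theorem interesting from the reverse-mathematical viewpoint.

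For the forward direction, suppose $\tP$ is terminating, that is, $R \cap (\acc \times \acc)$ is well-founded. I would take as transition invariant the single relation $T = R^+ \cap (\acc \times \acc)$, which is trivially a transition invariant. To see that it is disjunctively well-founded with $k = 1$, I would show that any infinite $T$-decreasing sequence $a_0, a_1, \dots$ of accessible states can be unfolded into an infinite $R$-decreasing sequence. The key point is that since $a_0$ is accessible it lies on some computation; prepending the initial segment of that computation to the concatenation of the finite $R$-paths witnessing $(a_{i+1}, a_i) \in R^+$ yields an infinite sequence starting from $I$, hence a computation, so \emph{all} of its states — including the intermediate ones — are accessible. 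This produces an infinite $R \cap (\acc \times \acc)$-decreasing sequence, contradicting termination.

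For the reverse direction, suppose $T = T_0 \cup \dots \cup T_{k-1} \supseteq R^+ \cap (\acc \times \acc)$ with each $T_m$ well-founded, and suppose toward a contradiction that $\tP$ is not terminating. Then there is an infinite computation $s_0, s_1, s_2, \dots$, whose states are all accessible and which satisfies $(s_{i+1}, s_i) \in R$. For any $i < j$ we then have $(s_j, s_i) \in R^+ \cap (\acc \times \acc) \subseteq T$, so some $T_m$ contains $(s_j, s_i)$. I would define a coloring $P : [\N]^2 \to k$ by letting $P(\bp{i,j})$ be the least $m$ with $(s_j, s_i) \in T_m$, and then apply $\RT^2_k$ to obtain an infinite homogeneous set $H = \bp{i_0 < i_1 < \dots}$ in some color $m$. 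By homogeneity $(s_{i_{a+1}}, s_{i_a}) \in T_m$ for every $a$, so $s_{i_0}, s_{i_1}, \dots$ is an infinite $T_m$-decreasing sequence, contradicting the well-foundedness of $T_m$.

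The main obstacle is the reverse direction, and specifically the appeal to $\RT^2_k$: this is exactly the non-constructive step that the remainder of the paper is devoted to analysing. One should verify that the coloring is well-defined (the least-witness convention keeps $P$ total and, once $T$ is suitably coded, effective) and that the orientation of $R$ — with computations being $R$-decreasing — matches the direction of well-foundedness, so that a homogeneous set genuinely yields a decreasing chain inside a \emph{single} $T_m$. In the forward direction the only delicate point is the accessibility-closure argument justifying that unfolding a $T$-chain from an accessible start never leaves $\acc$.
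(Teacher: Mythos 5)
Your proof is correct, and it is the classical Podelski--Rybalchenko argument: both directions check out, including the two delicate points you flag (closing the unfolded $T$-chain under accessibility by prepending a computation prefix, and the orientation of the coloring). Note, however, that the paper states this theorem as a citation and never reproduces the classical proof; its own derivation of the nontrivial direction appears in Theorem \ref{theorem: PRT-WRT}, where termination is deduced from the Weak $H$-closure Theorem, which in turn is equivalent to $\WRT^2_k$ rather than to $\RT^2_k$. The difference matters, and your own write-up already contains the reason: after applying Ramsey you only ever use the \emph{consecutive} pairs $(s_{i_{a+1}},s_{i_a})\in T_m$ of the homogeneous set, never the full homogeneity $(s_{i_b},s_{i_a})\in T_m$ for all $a<b$. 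So a weakly homogeneous sequence in the sense of $\WRT^2_k$ already suffices, and invoking full $\RT^2_k$ is an overshoot. This is not a gap --- your proof is valid --- but the gap between what you invoke and what you use is precisely the paper's main observation: since $\WRT^2_k$ sits at or below $\CAC$, which is strictly weaker than $\RT^2_2$, the Termination Theorem cannot require the full Ramsey Theorem for pairs, which is how the paper answers Gasarch's Open Problems 2 and 3. Your approach buys the familiar textbook argument; the paper's approach buys the exact calibration of the combinatorial core.
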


By unfolding definitions this result states that given a binary relation $R$, it is well-founded if and only if there exist a natural number $k$ and $k$-many well-founded relations $R_0,\dots, R_{k-1}$ whose union contains the transitive closure of $R$. The result is non trivial since the union of two well-founded relations can be ill-founded. As observed by Geser in \cite[pag 31]{Geser}, the fact that given any two well-founded binary relations if their union is transitive then it is well-founded has been remarked before Podelski and Rybalchenko. However the Termination Theorem is a non-trivial generalization of this result. In fact it cannot be directly proved from it by induction over the number of the relations, since we cannot keep the transitivity through the inductive steps.

\subsection{Inductively well-foundedness}

In order to introduce the $H$-closure Theorem we recall the definition of inductive well-foundedness as in \cite{Altenkirch}. A binary relation $R$ is classically well-founded (just well-founded for short) if it does not have infinite decreasing sequences. On the other hand $R$ is inductively well-founded if any $R$-inductive set contains all the elements in the domain of $X$, i.e. 
\[
	\forall x \forall X (\forall y (\forall z (zRy \implies z \in X)\implies y \in X) \implies x \in X). 
\]

Inductive well-foundedness is intuitionistically stronger than the classical notion \cite{BRBOUND}. Moreover even if classically they are equivalent, they are not the same in $\RCAo$.  In fact in this system we can prove that well-foundedness is stronger than  inductively well-foundedness, but the other implication is equivalent to $\ACAo$, as noticed by Marcone \cite{Marcone}.

\begin{lemma}[$\RCAo$]\label{lemma: wfimpliesiwf}
	$R$ well-founded implies $R$ inductively well-founded.
\end{lemma}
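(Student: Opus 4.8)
The plan is to argue by contraposition: assuming $R$ is \emph{not} inductively well-founded, I would construct an infinite $R$-descending sequence, which contradicts classical well-foundedness. Working in $\RCAo$, suppose the defining formula of inductive well-foundedness fails. Then there is a set $X$ that is $R$-inductive, meaning $\forall y (\forall z (zRy \to z \in X) \to y \in X)$, together with a point $x$ with $x \notin X$. First I would note that the complement $\N \setminus X$ exists by $\Delta^0_1$-comprehension (indeed $\Delta^0_0$-comprehension) relative to the parameter $X$, and that the $R$-inductiveness of $X$ yields the crucial descent property: for every $y \notin X$ there exists $z$ with $zRy$ and $z \notin X$. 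Indeed, were all $R$-predecessors of $y$ in $X$, the inductive clause would force $y \in X$.

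Since the states are coded as natural numbers, I would turn this existential statement into a canonical choice by taking least witnesses, thereby avoiding any appeal to a choice principle. Concretely, define $g$ by $g(y) = \mu z\,(zRy \wedge z \notin X)$ when $y \notin X$, and $g(y) = 0$ otherwise. Its graph is $\Delta^0_0$ in the parameters $R$ and $X$ (the minimality clause is a bounded quantifier), so $g$ exists as a total function, and by the descent property $g$ maps $\N \setminus X$ into $\N \setminus X$. Next I would define $f \colon \N \to \N$ by primitive recursion, $f(0) = x$ and $f(n+1) = g(f(n))$, which is legitimate in $\RCAo$ because $g$ is total. A straightforward induction on the $\Delta^0_1$ formula $f(n) \notin X$ (hence within $\Sigma^0_1$-induction) shows $f(n) \notin X$ for all $n$, so that $g(f(n)) = \mu z\,(zRf(n) \wedge z \notin X)$ and therefore $f(n+1) R f(n)$ for all $n$. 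Thus $f$ is an infinite $R$-descending sequence, contradicting the well-foundedness of $R$.

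The hard part will not be conceptual but a matter of bookkeeping about definability and induction complexity: I must verify that $g$ is genuinely $\Delta^0_1$ so that it exists as a set function in $\RCAo$, and that the verification of $f(n) \notin X$ stays within $\Sigma^0_1$-induction. The real heart of the argument is the use of least witnesses to replace what would otherwise be a dependent-choice step by an explicit primitive recursion; this is exactly why the implication is available already in $\RCAo$, in contrast to the converse implication, which requires the stronger $\ACAo$ as noted by Marcone.
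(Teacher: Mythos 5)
Your argument is correct and is essentially the paper's own proof: the paper likewise takes an inductive set $X$ with nonempty complement and builds the descending sequence by primitive recursion with minimization, $f(0)=x$ and $f(n)=\mu y\,(yRf(n-1)\wedge y\notin X)$, the least-witness device being exactly what makes the construction available in $\RCAo$. Your additional bookkeeping (isolating the descent property and checking the induction complexity of $f(n)\notin X$) just makes explicit what the paper leaves implicit.
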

\begin{proof}
	 Assume by contradiction that $R$ is not inductively well-founded. Let $X$ be an inductive set such that $\N \setminus X$ is not empty and let $x \in \N \setminus X$. Then we can define by primitive recursion and minimization the following infinite decreasing $R$-sequence.
	\[
		f(n)= 
		\begin{cases}
		\mu y\ ( y R f(n-1)\ \wedge \ y \notin X) &\mbox{if } n>0\\
		x &\mbox{if } n=0. \qedhere
		\end{cases} 
	\]
\end{proof}

\begin{proposition}[$\RCAo$, \cite{Marcone}]\label{proposition: wfiwf}
The following are equivalent:
\begin{itemize}
\item $\ACAo$ ;
\item for any binary relation $R$, $R$ inductively well-founded implies $R$ well-founded.
\end{itemize}
\end{proposition}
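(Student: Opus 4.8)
The plan is to prove the two implications separately, establishing the equivalence of $\ACAo$ with the statement that inductive well-foundedness implies (classical) well-foundedness.

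\medskip

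\noindent\textbf{From $\ACAo$ to the implication.} First I would work in $\ACAo$ and assume $R$ is inductively well-founded; the goal is to show $R$ has no infinite descending sequence. I would argue by contraposition. Suppose $f\colon\N\to\N$ is an infinite $R$-descending sequence, so $f(n+1)\mathrel{R}f(n)$ for all $n$. Using arithmetical comprehension, form the set
\[
	Y=\{\,x : \neg\exists n\,(f(n)=x)\,\}=\N\setminus\rng(f).
\]
Here arithmetical comprehension is exactly what is needed: membership in $\rng(f)$ is $\Sigma^0_1$, so its complement $Y$ exists as a set in $\ACAo$ (this is the step that fails in $\RCAo$). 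I would then check that $Y$ is $R$-inductive, i.e.\ that $\forall y\,(\forall z\,(z\mathrel{R}y\to z\in Y)\to y\in Y)$. If some $y\notin Y$, then $y=f(n)$ for some $n$; but $f(n+1)\mathrel{R}f(n)=y$ and $f(n+1)\in\rng(f)$, so $f(n+1)\notin Y$, witnessing that the hypothesis $\forall z\,(z\mathrel{R}y\to z\in Y)$ fails at $y$. Hence $Y$ is inductive. Since $R$ is inductively well-founded, the domain of $R$ is contained in $Y$; but $f(0)\in\rng(f)$ lies in the domain and is not in $Y$, a contradiction. Therefore no descending sequence exists and $R$ is well-founded.

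\medskip

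\noindent\textbf{From the implication back to $\ACAo$.} For the reversal I would work in $\RCAo$, assume the implication holds for every binary relation $R$, and derive $\ACAo$. Over $\RCAo$ it suffices to prove the existence of the range of an arbitrary injection $g\colon\N\to\N$, since this is a standard characterization of $\ACAo$. The key is to encode the range-problem as a well-foundedness problem. Given an injection $g$, I would define a primitive-recursive relation $R$ on $\N$ designed so that $R$ is \emph{inductively} well-founded outright (provably in $\RCAo$, without needing $\rng(g)$), while any infinite $R$-descending sequence would compute $\rng(g)$ or otherwise be blocked unless we already have arithmetical comprehension. Concretely, the natural construction is a tree-like relation built from approximations to $\rng(g)$: arrange nodes so that $x\mathrel{R}y$ exactly when $x$ extends $y$ by one step of searching for a new element entering the range, so that descending chains correspond to unbounded search through $g$. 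The hypothesis then forces $R$ to be classically well-founded, and from the well-foundedness—that is, from the guarantee that search terminates—I would extract the characteristic function of $\rng(g)$ by bounded search, yielding the set $\rng(g)$ in $\RCAo+(\text{implication})$.

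\medskip

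\noindent\textbf{The main obstacle.} The easy direction is the forward one: it is essentially the observation that in $\ACAo$ one can take the complement of a range, mirroring the construction already used in Lemma~\ref{lemma: wfimpliesiwf} but in reverse. The delicate part is the reversal, where the challenge is to design the relation $R$ so that inductive well-foundedness is \emph{provable in $\RCAo$} (this is what makes the hypothesis applicable) yet its classical well-foundedness genuinely encodes arithmetical comprehension. Getting both properties simultaneously—a $\Delta^0_1$ relation that is demonstrably inductively well-founded by a $\RCAo$-available induction, but whose lack of descending sequences is equivalent to the existence of $\rng(g)$—is the crux, and I would expect to spend most of the effort verifying the inductive well-foundedness of $R$ within the weak base theory and confirming that the extracted function is indeed $\Delta^0_1$-definable from the descending-sequence data.
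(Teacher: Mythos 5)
Your forward direction is correct and is essentially the paper's argument: contrapose, use arithmetical comprehension to form the complement of the range of the descending sequence, and check that this complement is $R$-inductive.

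The reversal, however, has a genuine gap, and it lies in the overall strategy rather than in a detail. You propose to build a relation $R$ that is \emph{provably inductively well-founded in $\RCAo$}, apply the hypothesis to conclude that $R$ is classically well-founded, and then ``extract'' $\rng(g)$ from that conclusion. But classical well-foundedness is a universal ($\Pi^1_1$) statement --- \emph{there is no} infinite descending sequence --- and asserting it gives you no set to work with; in particular, $\RCAo$ does not let you pass from ``every search along $R$ terminates'' to a characteristic function for $\rng(g)$ by ``bounded search,'' since no uniform bound on the lengths of descending sequences is available (that passage is essentially the content of Theorem~\ref{theorem: weightbounds} and costs $\WKLo$ even when a bound exists). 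Worse, the relation you sketch, in which descending chains correspond to unbounded search for $n$ entering $\rng(g)$, is actually \emph{ill}-founded whenever $\rng(g)\neq\N$ (for $n\notin\rng(g)$ the search never halts, giving an infinite chain), so it cannot be provably inductively well-founded in $\RCAo$ on pain of making $\RCAo$ plus the hypothesis inconsistent; the two properties you want to hold simultaneously are in irreconcilable tension.

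The paper's reversal uses the hypothesis in exactly the opposite way. Given an injection $f$, it builds a relation $R$ on points $a_k,b_n$ with $b_n\mathrel{R}a_k$ iff $f(n)=k$ and $a_k\mathrel{R}b_n$ iff $f(n+1)=k$, which is \emph{explicitly ill-founded} via the computable chain $a_{f(0)},b_0,a_{f(1)},b_1,\dots$. Applying the contrapositive of the hypothesis, $R$ is not inductively well-founded, and this conclusion is existential: it hands you an $R$-inductive set $X$ whose complement is nonempty. That set is the payoff. Since any $a_k$ with $k\notin\rng(f)$ has no $R$-predecessors, inductiveness forces $a_k\in X$; conversely all but finitely many $a_{f(n)}$ land in $\N\setminus X$, and from $X$ together with a finite correction one defines $\rng(f)$ by $\Delta^0_1$ comprehension. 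If you rework your reversal so that the hypothesis is invoked contrapositively on an ill-founded relation --- so that its conclusion \emph{produces} a set rather than merely denying the existence of a sequence --- the argument goes through.
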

\begin{proof}
\vgt{$\Downarrow$}. Assume that $\ACAo$ holds and assume that $R$ is not well-founded. Then there exists an infinite decreasing $R$-sequence $f$. By $\ACAo$ we can define $X$ to be the range of $f$. Then $\overline{X} =\N \setminus X$ is an inductive set which witnesses that $R$ is not inductively well-founded.
 
\vgt{$\Uparrow$}.Assume that for any binary relation $R$, $R$ inductively well-founded implies $R$ well-founded. Given a 1-1 function $f$ we have to prove that $f$ has a range. Define the binary relation $R$ on the set $\bp{a_n: n \in \N} \cup \bp{b_n: n \in \N}$, with $a_n \neq b_m$ for any $n,m$, as follows:
\begin{align*}
b_n R a_k &\iff f(n)=k\\
a_k R b_n &\iff f(n+1)=k
\end{align*}
$R$ is not well-founded as witnessed by the descending sequence $ a_{f(0)}, b_0, a_{f(1)}, b_1, \dots$.  Indeed for any $i \in \N$, $a_{f(i+1)} R b_i$ and $b_{i+1} R a_{f(i+1)}$. Then by hypothesis it is not inductively well-founded. Hence there exists $X \subseteq \N$ inductive such that $\overline{X}= \N \setminus X$ is not empty. Therefore $\overline{X}$ contains no $a_k$ such that $k$ is not in the range of $f$, since all these elements have no $R$-predecessors. Moreover there exists $m\in \N$ such that $\overline{X}$ contains every $a_{f(n)}$ with $n\geq m$. In fact since $\overline{X}$ is non-empty we have two possibilities: either $b_i \in \overline{X}$ for some $i$ and then for any $n\geq i+1$ we have $a_{f(n)}\in \overline{X}$, or $a_{k}\in \overline{X}$ for some $k$, but since it does not belong to $X$ also its predecessor belongs to $\overline{X}$ and it is $b_i$ for some $i$, therefore we are done again. 
Hence we can define the range of $f$ as follows:
\[
	\bp{n \in \N : a_n \in X} \cup \bp{f(n): n < m} \qedhere
\]
\end{proof}

However as witnessed by Lemma \ref{lemma: wfimpliesiwf}, the classical definition of well-foundedness implies the inductive definition and moreover if the binary relation is transitive then also the other implication turns out to be true. 

\begin{lemma}[$\RCAo$]
If $R$ is transitive and inductively well-founded, then it is well-founded.
\end{lemma}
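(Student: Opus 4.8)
The plan is to prove the contrapositive in $\RCAo$: assuming $R$ is transitive but not well-founded, I will produce a set that is $R$-inductive yet proper, contradicting inductive well-foundedness. Since $R$ is not well-founded, fix an infinite descending sequence $f$ with $f(n+1)Rf(n)$ for all $n$. The naive idea would be to let $X$ be the complement of the range of $f$: this range has no $R$-minimal element, since each $f(n)$ has the $R$-predecessor $f(n+1)$, so its complement is $R$-inductive, and it is proper because $f(0)$ is omitted. The difficulty --- and this is precisely the point at which the general implication in Proposition~\ref{proposition: wfiwf} needs $\ACAo$ --- is that the range of $f$ is only $\Sigma^0_1$ and so need not exist as a set in $\RCAo$. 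The entire task is therefore to replace the range of $f$ by a genuinely $\Delta^0_1$ substitute, and this is where transitivity of $R$ enters.

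First I would treat the case in which $f$ has unbounded range. Using transitivity I will thin $f$ to a subsequence that is strictly increasing as a sequence of natural numbers. Put $n_0=0$, $g(k):=f(n_k)$, and $n_{k+1}:=\mu n\,(n>n_k\wedge f(n)>g(k))$; unboundedness of the range guarantees that each search succeeds, so $g$ is total and is definable by primitive recursion with a provably total minimization, hence available in $\RCAo$. Since $n_{k+1}>n_k$, chaining $f(n_{k+1})Rf(n_{k+1}-1)R\cdots Rf(n_k)$ and applying transitivity gives $g(k+1)Rg(k)$, so $g$ remains $R$-descending. The gain is that $g$ is numerically strictly increasing, whence $g(k)\ge k$ and the range $G=\bp{g(k):k\in\N}$ obeys $y\in G\iff\exists k\le y\,(g(k)=y)$, a bounded condition; thus both $G$ and $\N\setminus G$ are $\Delta^0_1$. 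Now $X:=\N\setminus G$ is $R$-inductive, because every $y\in G$ is some $g(k)$ and has the $R$-predecessor $g(k+1)\in G$, which is exactly the condition for $\N\setminus G$ to be inductive; and $X$ is proper since $g(0)\notin X$. This contradicts inductive well-foundedness.

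It remains to dispose of the case in which the range of $f$ is bounded, say by $b$. Then among $f(0),\dots,f(b+1)$ a bounded pigeonhole search yields $n<m\le b+1$ with $f(n)=f(m)$; iterating $f(i+1)Rf(i)$ and using transitivity gives $f(m)Rf(n)$, hence $pRp$ for $p:=f(m)$. The singleton $\bp{p}$ is trivially $\Delta^0_1$, its complement is $R$-inductive (its sole member $p$ has itself as an $R$-predecessor), and it is proper, once more contradicting inductive well-foundedness. The main obstacle throughout is the one flagged in the first paragraph: ensuring the witnessing inductive set is $\Delta^0_1$ rather than merely $\Sigma^0_1$. Transitivity is exactly what allows the passage to a value-increasing descending subsequence, turning the $\Sigma^0_1$ range of $f$ into the $\Delta^0_1$ range of $g$; without it, as the construction in Proposition~\ref{proposition: wfiwf} demonstrates, no such $\Delta^0_1$ set need exist and the implication genuinely requires $\ACAo$.
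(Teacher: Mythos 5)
Your proof is correct and follows essentially the same strategy as the paper's: a degenerate case in which transitivity produces a cycle (you collapse it to a single self-loop $pRp$ and use the complement of $\{p\}$, where the paper keeps the whole finite periodic orbit), and a main case in which transitivity lets you pass to a value-increasing $R$-descending subsequence whose range is $\Delta^0_1$, so that its complement is a proper inductive set. The only cosmetic differences are the case split (bounded versus unbounded range rather than injective versus non-injective) and your greedy minimization in place of the paper's record-index construction.
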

\begin{proof}
Assume by contradiction that $R$ is not well-founded, then there exists an infinite decreasing transitive $R$-sequence:
$f: \N \rightarrow \N$. 
	
	We have two possibilities:
	\begin{enumerate}
	\item $f$ is not injective. Let $n$, $m$ such that $n < m $ and $f(n)=f(m)$. Then define 
	\[
	\begin{split}
		g: \N &\rightarrow \N \\
			k &\mapsto f(n+(k \mod(m-n))).
	\end{split}
	\]
	\item $f$ is injective.  Then define $h: \N \rightarrow \N$ as follows
	\[
		h(k)= 
				\begin{cases}
				    f(0) &\mbox{if } k=0;\\
					\mu m ( m > h(k-1) \wedge  \forall i < m (f(m) > f(i))) &\mbox{if } k>0.
				\end{cases} 
	\]
	Let $g: \N \rightarrow \N$ be such that	$g(k)= f(h(k))$.
	\end{enumerate}
	The function $g$ is a $R$-sequence. In the case (1), we have 
	\[
		g(0) = f(n), g(1) = f(n+1),\dots , g(m-n-1) = f(m-1), g(m-n) = f(n) = f(m) = g(0),
	\] 
	therefore 
	\[
		g(0) R g(1) R \dots R g(m-n-1) R g(m-n) = g(0).
	\]
	In the case (2), $g$ is an infinite subsequence of a decreasing transitive $R$-sequence, therefore $g$ is a decreasing $R$-sequence.
	
	There exists $X$ which is the range of $g$. In the case (1) the set $X$ exists because it is finite. In the case (2), $g$ is increasing, therefore $\exists m(g(m)=n)$ is equivalent to
	\[
		 \exists m \leq n (g(m) = n),
	\]
	which is $\Delta^0_1$.
	Let $\overline{X}= \N \setminus X$. $\overline{X}$ is different from $\N$: if we prove that $X$ is $R$-inductive we conclude that $R$ is not inductive.  In fact
	\[
		\forall y(\forall z (z \succ y \implies z \in \overline{X})\implies y \in \overline{X})
	\]
	holds. We prove the contrapositive. If $y\notin \overline{X}$, then $y\in X$ and since $g$ is an infinite decreasing $R$-sequence, this implies there exists $z$ in $X$ such that $z \succ y$.
\end{proof}

\subsection{H-closure Theorem}

Podelski and Rybalchenko in \cite{Podelski} classically proved their Termination Theorem by using Ramsey's Theorem for pairs which is a purely classical result \cite{RT22iff3LLPO}. However if we consider the definition of inductive well-foundedness instead of the classical one the Termination Theorem turns out to be intuitionistic. $H$-closure was introduced for this purpose \cite{Hclosure}. It was not the first intuitionistic proof of the intuitionistic version  of the Termination Theorem since it was already proved by Vytiniotis, Coquand and Wahlstedt  in \cite{CoquandStop} by using the Almost-Full Theorem \cite{Coquand}. However the Almost-Full Theorem and the $H$-closure Theorem are not intuitionistically equivalent, the first one is obtained from Ramsey's Theorem by two classical steps: a contrapositive and an application of the De' Morgan Law. On the other hand $H$-closure is obtained from Ramsey's Theorem by using just a contrapositive. Moreover both of them provide the fragment of Ramsey's Theorem for pairs needed to prove the Termination Theorem.

First of all we have to define $H$-well-foundedness as in \cite{Hclosure}. We say that a relation is $H$-well-founded if it has no infinite transitive decreasing sequences. Let $\succ$ be the one-step expansion between finite sequences: i.e.
\[
	\ap{y_0, \dots, y_{k}} \succ \ap{x_0, \dots, x_h} \iff \ k=h+1 \wedge \forall i \in k (y_{i}=x_i). 
\]
Here we say that a sequence $a_0,a_1,\dots, a_n$ is $\succ$-decreasing sequence if  $a_n \succ \dots \succ a_1 \succ a_0$. 

\begin{defi}
 	Let $R$ be a binary relation on $S$. 
 	\begin{itemize}
 	\item $H(R)$ is the set of the $R$-decreasing transitive finite sequences on $S$:
 		\[
 			 \ap{x_0,\dots, x_{n-1}} \in H(R) \iff \forall i, j < n\ (i<j \implies x_j R x_i).
 	 	\]
 	\item $R$ is (inductively) $H$-well-founded if $H(R)$ is  (inductively) $\succ$-well-founded.
 	\end{itemize}
\end{defi}

Observe that there is an infinite $\succ$-decreasing sequence in $H(R)$ if and only if there is an infinite $R$-decreasing transitive sequence. There is a strong connection between (inductively) well-foundedness and (inductively) $H$-well-foundedness, as shown by the following proposition.

\begin{proposition}[Proposition 1 \cite{Hclosure} ]\label{proposition: wfHwf}
Let $R$ be a binary relation.
	\begin{enumerate}
		\item If $R$ is (inductively) well-founded then $R$ is (inductively) $H$-well-founded;
		\item If $R$ is (inductively) $H$-well-founded and $R$ transitive then $R$ is (inductively) well-founded.
	\end{enumerate}
\end{proposition}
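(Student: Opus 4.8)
The statement bundles four implications (the classical and the inductive reading of each item), so I would treat them separately and dispatch the two genuinely easy ones first. For the classical reading of item~1, I would simply note that an infinite $R$-decreasing transitive sequence is in particular an infinite $R$-decreasing sequence; hence if $R$ has no infinite decreasing sequence it has no infinite transitive decreasing one, which by the observation preceding the proposition is exactly $H$-well-foundedness. For the classical reading of item~2 I would use transitivity in the opposite direction: if $R$ is transitive, then any infinite $R$-decreasing sequence $x_0,x_1,\dots$ (with $x_{i+1}Rx_i$) automatically satisfies $x_jRx_i$ for all $i<j$, so it is transitive; thus $H$-well-foundedness forbids infinite decreasing sequences and $R$ is well-founded. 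Both arguments are purely arithmetical and go through in $\RCAo$.

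For the inductive reading of item~2 I would argue directly. Fix an $R$-inductive set $X$; the goal is $X=\N$. The plan is to transfer $R$-inductiveness of $X$ to $\succ$-inductiveness of
\[
	W=\bp{s\in H(R): s=\ap{}\ \vee\ \mathrm{last}(s)\in X}.
\]
The decisive point is that, because $R$ is transitive and $t\in H(R)$, the one-step extensions of a nonempty $t\in H(R)$ with last entry $a$ are exactly the $t\frown\ap b$ with $b R a$ (transitivity upgrades $bRa$ to $bRx_i$ for every earlier entry $x_i$ of $t$). Hence, assuming all one-step extensions of $t$ lie in $W$, we get $b\in X$ for all $bRa$, so $a\in X$ by $R$-inductiveness, and therefore $t\in W$; the empty sequence lies in $W$ by definition. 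Thus $W$ is $\succ$-inductive, inductive $H$-well-foundedness yields $W=H(R)$, and specialising to the singletons $\ap a$ gives $a\in X$ for every $a$. Since $W$ is $\Delta^0_1$ in $X$, it is available in $\RCAo$ and the whole argument stays within the base theory.

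For the inductive reading of item~1 I would run the symmetric ``forward'' induction, and here no transitivity is needed. Given a $\succ$-inductive $W\subseteq H(R)$, set
\[
	A=\bp{a\in\N:\A s\,(s\frown\ap a\in H(R)\implies s\frown\ap a\in W)}
\]
and show $A$ is $R$-inductive: suppose every $R$-predecessor $b$ of $a$ lies in $A$, and let $s$ be any prefix with $s\frown\ap a\in H(R)$. Each one-step extension $s\frown\ap{a,b}\in H(R)$ forces $bRa$, hence $b\in A$; applying $b\in A$ to the prefix $s\frown\ap a$ gives $s\frown\ap{a,b}\in W$. By $\succ$-inductiveness of $W$ we conclude $s\frown\ap a\in W$, so $a\in A$. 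Inductive well-foundedness of $R$ then gives $A=\N$, so every nonempty element of $H(R)$ lies in $W$; a final appeal to $\succ$-inductiveness (the one-step extensions of $\ap{}$ being exactly the singletons, all now in $W$) puts $\ap{}\in W$ as well.

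I expect this last step to be the main obstacle. The set $A$ is defined by a $\Pi^0_1$ condition (a universal quantifier over all prefixes $s$), so forming it in order to feed it to the second-order hypothesis ``$R$ is inductively well-founded'' ($\A X(\dots)$) is an instance of arithmetical comprehension rather than of $\Delta^0_1$ comprehension. A quick check suggests this is not a mere artefact of the proof: the contrapositive route would manufacture an infinite $R$-decreasing transitive sequence from a $\succ$-inductive $W\subsetneq H(R)$ and then try to contradict inductive well-foundedness, which is precisely the passage shown to be equivalent to $\ACAo$ in Proposition~\ref{proposition: wfiwf}. So, whereas the classical parts and the inductive version of item~2 are unproblematic in $\RCAo$, the inductive version of item~1 is delicate: it is immediate intuitionistically, where it is plain accessibility induction as in \cite{Hclosure}, but over $\RCAo$ one must either invoke arithmetical comprehension to form $A$ or check that the ambient context already supplies the required set.
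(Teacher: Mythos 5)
The paper does not actually prove this proposition: it is imported verbatim as Proposition 1 of \cite{Hclosure} (note that, unlike the surrounding results, it carries no base-system tag), so there is no in-paper proof to compare against. Judged on its own merits, your treatment of the two classical implications and of the inductive reading of item~2 is correct and stays within $\RCAo$: the set $W=\bp{s\in H(R): s=\ap{}\ \vee\ \mathrm{last}(s)\in X}$ is $\Delta^0_1$ in $X$ and $R$, and your observation that transitivity makes the one-step extensions of a nonempty $t\in H(R)$ depend only on its last entry is exactly the right pivot.

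Your assessment of the inductive reading of item~1, however, ends on a wrong note. You are right that your set $A$ is $\Pi^0_1$ in $W$, so forming it is an appeal to arithmetical comprehension; but the conclusion that the implication itself is delicate over $\RCAo$, and in particular that the contrapositive route runs into the $\ACAo$-equivalent passage of Proposition~\ref{proposition: wfiwf}, is incorrect. The contrapositive route does go through in $\RCAo$: a $\succ$-inductive $W\subsetneq H(R)$ yields, by the minimization argument of Lemma~\ref{lemma: wfimpliesiwf} applied to $\succ$ on $H(R)$, an infinite $\succ$-decreasing sequence and hence an infinite decreasing \emph{transitive} $R$-sequence $f$; and the proof of the unnamed lemma following Proposition~\ref{proposition: wfiwf} shows in $\RCAo$ that such an $f$ already refutes inductive well-foundedness of $R$. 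That proof uses transitivity of $R$ only to make $f$ transitive, which here you get for free; it then either extracts a finite cycle or a value-increasing subsequence $g$ whose range is $\Delta^0_1$ and whose complement is $R$-inductive and proper, thereby avoiding the comprehension you feared. Equivalently: inductive well-foundedness of $R$ implies classical $\succ$-well-foundedness of $H(R)$ by that argument, and Lemma~\ref{lemma: wfimpliesiwf} (for $\succ$ on $H(R)$) upgrades this to inductive $\succ$-well-foundedness. So all four implications hold over $\RCAo$; only your particular direct proof of the last one overshoots the base theory.
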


Moreover there are relations which are $H$-well-founded but not well-founded. For instance $R=\bp{\ap{n+1,n}: n \in \N}$. In fact any sequence in $H(R)$ has length at most two, but $R$ is not well-founded.

The $H$-closure theorem states that the inductively $H$-well-founded relations are closed under finite unions. Formally

\begin{theorem}[Theorem 2 \cite{Hclosure}]\label{theorem: Hclosure}
Let $R_0,\dots, R_{k-1}$ be binary relations. If $R_0, \dots, R_{k-1}$ are inductively $H$-well-founded then $(R_0 \cup \dots \cup R_{k-1})$ is inductively $H$-well-founded.
\end{theorem}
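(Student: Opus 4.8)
The plan is to prove the statement by induction on $k$, with all of the content concentrated in the case of two relations. For the inductive step one writes $R_0\cup\dots\cup R_k=(R_0\cup\dots\cup R_{k-1})\cup R_k$; assuming the statement for $k$ relations, the inner union $S=R_0\cup\dots\cup R_{k-1}$ is inductively $H$-well-founded, so it suffices to know that the union of \emph{two} inductively $H$-well-founded relations is again inductively $H$-well-founded. Thus everything reduces to the two-relation case: if $R_0$ and $R_1$ are inductively $H$-well-founded, then so is $R_0\cup R_1$. The base case $k=1$ is trivial.

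For the two-relation case the guiding intuition is the classical Ramsey argument, which exhibits $H$-closure as, in the paper's phrasing, \vgt{one contrapositive away} from $\RT^2_2$. Classically, if $R_0\cup R_1$ were not $H$-well-founded there would be an infinite transitive decreasing $(R_0\cup R_1)$-sequence $x_0,x_1,\dots$; colouring each pair $\{i,j\}$ with $i<j$ by a colour $c$ with $x_j R_c x_i$ and applying $\RT^2_2$ yields an infinite homogeneous set whose induced subsequence is an infinite transitive decreasing $R_c$-sequence, contradicting the $H$-well-foundedness of $R_c$. To obtain the \emph{inductive} conclusion I would recast this in terms of the accessibility predicate of $\succ$ on $H(R_0\cup R_1)$ rather than in terms of infinite descending sequences: concretely, I would show that the empty sequence lies in the accessible part of $H(R_0\cup R_1)$ by a nested induction driven by the (inductively defined) accessibility predicates of $\succ$ on $H(R_0)$ and on $H(R_1)$. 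When a sequence $s\in H(R_0\cup R_1)$ is extended by a new bottom element $a$, the element $a$ is related by $R_0$ or by $R_1$ to each element already occurring in $s$, and the aim is to read off from this a legitimate descent in one of the two component accessibility orders, so that the outer induction hypothesis applies to the extended sequence.

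The hard part is precisely this last step, and it is where the genuine strength of Ramsey for pairs enters. Knowing that $a$ is $R_0$- or $R_1$-related to each previous element is \emph{local} colour information relative to single points, and by itself it does not produce a monochromatic, hence transitive, $R_c$-subsequence; this mismatch between local and global homogeneity is exactly what makes $\RT^2_2$ non-trivial, and it is the obstacle one must overcome by the nested induction rather than by a single colouring. A second point requiring care, specific to the reverse-mathematical setting, is that \vgt{accessibility rank} must be handled through the inductive-definition formulation of well-foundedness used in the definition above, since ordinals are not available in the weak base theory; one therefore performs the double induction directly on the two accessibility predicates, checking at each stage that the derived descent keeps the relevant sequences inside $H(R_0)$ and $H(R_1)$.
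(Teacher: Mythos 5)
The paper does not actually prove Theorem~\ref{theorem: Hclosure}; it imports it from \cite{Hclosure}, so the only fair comparison is with the argument there, which is a genuine double induction on accessibility predicates. Your proposal correctly sets up the right frame: the reduction to the two-relation case by (external) induction on $k$ is sound here, because inductive $H$-well-foundedness of the union is itself the closure property being iterated (unlike the Termination Theorem, where transitivity is lost at each step). But the two-relation case, which you yourself call ``the hard part,'' is not proved. The classical Ramsey argument you recall establishes only that $R_0\cup R_1$ has no infinite transitive descending sequence, i.e.\ classical $H$-well-foundedness; passing from that to the \emph{inductive} statement is exactly what fails in $\RCAo$ (cf.\ Proposition~\ref{proposition: wfiwf} and Lemma~\ref{iHwfimpliesHwf}, which give only the converse direction cheaply). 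So the contrapositive-of-Ramsey intuition cannot be converted into the inductive conclusion by a routine recasting.

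The missing content is the actual engine of the nested induction: you must specify, for each $s\in H(R_0\cup R_1)$, which element (or elements) of $H(R_0)$ and $H(R_1)$ the induction is running on, and then verify that a one-step $\succ$-extension of $s$ by a new bottom element $a$ forces a strict descent in at least one of the two component accessibility orders. The obvious candidate --- track a maximal $R_0$-transitive subsequence $l_0$ and a maximal $R_1$-transitive subsequence $l_1$ of $s$ and argue that $a$ extends one of them --- fails for precisely the local-versus-global reason you name: $a$ is $R_0$- or $R_1$-below each element of $s$ separately, so it may fail to be $R_0$-below some element of $l_0$ and simultaneously fail to be $R_1$-below some element of $l_1$, in which case neither $l_0{}^\frown a\in H(R_0)$ nor $l_1{}^\frown a\in H(R_1)$. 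Overcoming this requires a more carefully chosen invariant (this is the substance of Theorem~2 of \cite{Hclosure}), and your proposal states the goal of that step rather than carrying it out. As written, then, the proposal is a correct strategic outline with the central lemma left unproved; it does not yet constitute a proof.
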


\section{Termination Theorem and H-closure Theorem in the reverse mathematics' zoo}\label{Section: zoo}

The first question about the Termination Theorem and the $H$-closure Theorem is whether they are equivalent to Ramsey's Theorem for pairs over $\RCAo$. In this section we prove that actually the H-closure Theorem is equivalent to Ramsey's Theorem for pairs. On the other hand the Termination Theorem is equivalent to Weak Ramsey's Theorem.

\subsection{H-closure Theorem and Ramsey's Theorem for pairs}

The main problem we have in order to prove this equivalence is that the $H$-closure Theorem uses the inductive definition of well-foundedness which is not equivalent to the classical one as shown by Proposition \ref{proposition: wfiwf}. Fortunately we can show that if $R$ is inductively $H$-well-founded then it is classical $H$-well-founded as well. And by using this result we obtain the desired equivalence. 

Let $\Seq$ be the set of code for finite sequences. First of all we can observe that $H(R)$ is defined by the following formula
	\[
	  	s\in H(R) \iff  s\in \Seq \wedge \forall i,j < \lh(s) (i<j \implies s(j) R s(i)).
	\]
	
\begin{lemma}[$\RCAo$] \label{iHwfimpliesHwf} Let $R$ be a binary relation.
	If $R$ is inductively $H$-well-founded then $R$ is $H$-well-founded. 
\end{lemma}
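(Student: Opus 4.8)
The plan is to argue the contrapositive: assuming $R$ is not $H$-well-founded, I will produce a $\succ$-inductive subset of $H(R)$ that is not all of $H(R)$, which exactly witnesses that $R$ is not inductively $H$-well-founded. This has the same shape as the easy direction of Proposition~\ref{proposition: wfiwf}, but the content of the lemma is that here the construction can be carried out in $\RCAo$, whereas the general implication \vgt{inductively well-founded $\implies$ well-founded} costs $\ACAo$. The reason the arithmetical comprehension can be avoided is the tree structure of $\succ$: an infinite $\succ$-decreasing sequence in $H(R)$ is literally an increasing chain of initial segments of one infinite branch, and membership in that branch is decidable once the branch is available as a function.

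Concretely, suppose $H(R)$ is not $\succ$-well-founded, so there is an infinite $\succ$-decreasing sequence $(a_i)_{i\in\N}$ in $H(R)$, i.e.\ $a_{i+1}\succ a_i$ for every $i$. Since each step is a one-element extension, the $a_i$ agree on all common coordinates and their lengths increase without bound, so I can define a single function $b\colon\N\to\N$ by letting $b(j)=a_i(j)$ for the least $i$ with $\lh(a_i)>j$; this $b$ exists by $\Delta^0_1$-comprehension. Each initial segment $b\rest m=\ap{b(0),\dots,b(m-1)}$ is an initial segment of some $a_i$, and since initial segments of members of $H(R)$ are again in $H(R)$, every $b\rest m$ lies in $H(R)$.

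Now set
\[
	X=\bp{s\in H(R): \lnot\,\E m\,(s=b\rest m)}.
\]
Deciding whether a given $s$ of length $m$ equals $b\rest m$ is just comparing $s(j)$ with $b(j)$ for $j<m$, a bounded and hence decidable condition relative to $b$; thus $X$ exists by $\Delta^0_1$-comprehension. I claim $X$ is $\succ$-inductive in $H(R)$: arguing contrapositively, if $y\notin X$ then $y=b\rest m$ for some $m$, and then $z=b\rest(m+1)$ satisfies $z\succ y$, $z\in H(R)$ and $z\notin X$, so the hypothesis $\A z\,(z\succ y\to z\in X)$ fails at $y$. Finally $X\neq H(R)$, since $a_0=b\rest\lh(a_0)$ lies outside $X$. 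Hence $H(R)$ admits a $\succ$-inductive subset missing the element $a_0$, so it is not inductively $\succ$-well-founded, i.e.\ $R$ is not inductively $H$-well-founded, as required.

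The argument is essentially routine once set up this way, and the only delicate point is \emph{definability}. The would-be obstacle — and the source of the $\ACAo$ in the general Proposition~\ref{proposition: wfiwf} — is forming the range of an infinite sequence. Here that difficulty disappears: because $\succ$ extends a sequence by exactly one coordinate at a time, both the branch $b$ and the set $X$ are $\Delta^0_1$ in the given descending sequence, so no comprehension beyond $\RCAo$ is invoked. I therefore expect that the main care in a clean write-up goes into bookkeeping lengths and coordinates (and the harmless offset from $a_0$ possibly being nonempty), rather than into any genuine extra logical strength.
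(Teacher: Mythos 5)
Your proposal is correct and follows essentially the same route as the paper: argue the contrapositive and observe that the set of terms of the infinite $\succ$-descending sequence is $\Delta^0_1$-definable (because lengths strictly increase along $\succ$), so its complement in $H(R)$ is an inductive proper subset available in $\RCAo$. The only cosmetic difference is that you first extract the underlying branch $b$ and complement the set of its initial segments, whereas the paper directly takes $X=\rng(f)$ using $s\in\rng(f)\iff\exists x\le\lh(s)\,(f(x)=s)$.
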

\begin{proof}
	Assume by contradiction that $R$ is not $H$-well-founded. Then there exists an infinite decreasing $\succ$-sequence $f: \N \rightarrow H(R)$.  Let $X = \rng(f)$, it exists since $s \in \rng(f)$ iff $\exists x \leq \lh(s)(f(x)=s)$. Hence put $\overline{X}= H(R) \setminus X$. This is a counterexample to the inductiveness of $\succ$ in $H(R)$. In fact
	\[
		\forall y(\forall z (z \succ y \implies z \in \overline{X})\implies y \in \overline{X})
	\]
	holds (where $z \succ y$ iff $s(z) \succ s(y)$).  We prove the contrapositive. If $y\notin \overline{X}$, then $y\in X$ and since $f$ is an infinite decreasing $\succ$-sequence, this implies there exists $z$ in $X$ such that $z \succ y$.
\end{proof}

Thanks to the previous lemma we can prove that the $H$-closure Theorem is equivalent to Ramsey's Theorem for pairs  by considering the classical definition of well-foundedness instead of the intuitionistic one.

\begin{theorem}[$\RCAo$]\label{theorem: HCL-RT}
Let $k$ be a natural number, then the following are equivalent
\begin{enumerate}
\item the $H$-closure Theorem for $k$-many relation, i.e. the union of $k$-many inductively $H$-well-founded relations is inductively $H$-well-founded;
\item the union of $k$-many $H$-well-founded relations is $H$-well-founded;
\item $\RT^2_k$. 
\end{enumerate}
\end{theorem}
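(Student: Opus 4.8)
The plan is to route everything through the core equivalence $(2)\Leftrightarrow(3)$, after first collapsing the distinction between the inductive and the classical notion of $H$-well-foundedness so that $(1)$ and $(2)$ become, in effect, the same statement. The key preliminary observation is that over $\RCAo$, for every binary relation $R$, being inductively $H$-well-founded is \emph{equivalent} to being $H$-well-founded. One direction is exactly Lemma \ref{iHwfimpliesHwf}. For the converse I would apply Lemma \ref{lemma: wfimpliesiwf} to the relation $\succ$ restricted to the set $H(R)$: this set is $\Delta^0_1$-definable from $R$ and hence exists in $\RCAo$, and there ``$\succ$ well-founded on $H(R)$'' is by definition $R$ being $H$-well-founded, while ``$\succ$ inductively well-founded on $H(R)$'' is $R$ being inductively $H$-well-founded. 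Since the two predicates are thus interchangeable, the hypotheses and conclusions of $(1)$ and $(2)$ match term by term, giving $(1)\Leftrightarrow(2)$ with no extra work.

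For $(3)\Rightarrow(2)$ I would run the standard Ramsey argument. Assume $\RT^2_k$ and let $R_0,\dots,R_{k-1}$ be $H$-well-founded; suppose toward a contradiction that their union $R$ fails to be $H$-well-founded, so there is an infinite $R$-decreasing transitive sequence $x_0,x_1,\dots$ with $x_j R x_i$ for all $i<j$. I would colour $[\N]^2$ by sending $\{i,j\}$ (with $i<j$) to the least $c$ such that $x_j R_c x_i$, which is total because $R=\bigcup_{c} R_c$. An infinite homogeneous set $\{i_0<i_1<\cdots\}$ in colour $c$, furnished by $\RT^2_k$, produces the subsequence $x_{i_0},x_{i_1},\dots$ satisfying $x_{i_b} R_c x_{i_a}$ whenever $a<b$; this is an infinite $R_c$-decreasing transitive sequence, contradicting the $H$-well-foundedness of $R_c$. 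Hence $R$ is $H$-well-founded.

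For $(2)\Rightarrow(3)$, which is the reversal and the place where one must encode an arbitrary colouring, I would exploit the fact that an $H$-decreasing transitive chain of a ``colour relation'' literally \emph{is} a homogeneous set. Given $P\colon[\N]^2\to k$, set $R_c=\{(m,n) : n<m \ \wedge\ P(\{n,m\})=c\}$ for each $c<k$. An infinite $R_c$-decreasing transitive sequence is a strictly increasing sequence all of whose pairs receive colour $c$, i.e. precisely an infinite $P$-homogeneous set in colour $c$. Now $\bigcup_{c} R_c$ is exactly the relation $m R n \iff n<m$, which is manifestly not $H$-well-founded, as witnessed by $0,1,2,\dots$. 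Applying the contrapositive of $(2)$, not all of $R_0,\dots,R_{k-1}$ can be $H$-well-founded, so some $R_c$ admits an infinite $R_c$-decreasing transitive sequence, which is the desired infinite homogeneous set. Together with $(1)\Leftrightarrow(2)$ and $(3)\Rightarrow(2)$ this closes the cycle.

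I expect the genuinely delicate point to be the first paragraph, in keeping with the paper's own flag that the inductive/classical mismatch is the real difficulty: one must check that Lemma \ref{lemma: wfimpliesiwf}, stated for relations on $\N$, applies verbatim to $\succ$ on $H(R)$, and, crucially, that together with Lemma \ref{iHwfimpliesHwf} it yields a \emph{two-way} collapse in $\RCAo$ — unlike the general situation of Proposition \ref{proposition: wfiwf}, where inductive-to-classical needs $\ACAo$. Once that collapse is secured and the correspondence ``homogeneous set $=$ $H$-decreasing transitive chain of a colour relation'' is identified, both Ramsey directions are routine.
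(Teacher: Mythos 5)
Your proposal is correct and takes essentially the same route as the paper: the equivalence of $(1)$ and $(2)$ is obtained from exactly Lemmas \ref{lemma: wfimpliesiwf} and \ref{iHwfimpliesHwf} applied to $\succ$ on $H(R)$, and the two Ramsey directions use the same relations/colourings (the $R_c$ built from a colouring $P$ with union $\{(m,n):n<m\}$, and the colouring of index pairs induced by an infinite transitive decreasing sequence). The only detail worth making explicit in $(2)\Rightarrow(3)$ is that the resulting $R_c$-decreasing transitive sequence is strictly increasing, so its range exists as a set in $\RCAo$ and genuinely witnesses $\RT^2_k$.
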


\begin{proof}
\vgt{$1\Leftrightarrow 2$}. It follows from Lemma \ref{lemma: wfimpliesiwf} and Lemma \ref{iHwfimpliesHwf}. 

\vgt{$2 \Rightarrow 3$}. Let $R_i'$ be symmetric relations defined by a $k$-coloring on $[\N]^2$ such that
		\[
			R_0' \cup \dots \cup R_{k-1}' =\bp{(x,y) \in \N \times \N : x \neq y}.
		\]
		We need to prove that there exists an infinite homogeneous set $X \subseteq \N$.
		For any $i < k$, put
		\[
			R_i := \bp{(x,y) : x R_i' y \wedge x>y}
		\]
		$R_i$ is defined by $\Delta^0_1$-comprehension.
		Then
		\[
			R_0 \cup \dots \cup R_{k-1} =\bp{(x,y) : x > y },
		\]
		and $\bp{n: n \in \N}$ is an infinite transitive decreasing $(R_0 \cup \dots \cup R_{k-1})$-sequence. By applying the $H$-closure Theorem we obtain there exists an infinite transitive decreasing sequence $f_i: \N \rightarrow H(R_i)$ for some $i < k$.

		Define $\tilde{f}_i: \N \to \N$ such that $\tilde{f}_i(n)$ is the last element of $f_i(n)$, i.e. $\tilde{f}_i(n)= f_i(n)(n-1)$. Let $X$ be the range of $\tilde{f}_i$. It exists since $\tilde{f}_i$ is increasing. Then $X$ is an infinite homogeneous subset of $\N$.
		
\vgt{$3\Rightarrow 2$}. Suppose that there exists an infinite transitive decreasing $(R_0 \cup \dots \cup R_{k-1})$-sequence: 
$f: \N \rightarrow \N$. For any $i < k $, put
		\[
			R_i' := \bp{(m,n) : (m<n \wedge f(n) R_i f(m) ) \vee (n<m \wedge f(m) R_i f(n))},
		\]
		Since $f$ is $(R_0 \cup \dots \cup R_{k-1})$-transitive for any $m,\ n \in \N$ we have
		\[
			m<n \implies f(n) (R_0 \cup \dots \cup R_{k-1}) f(m),
		\]
		then $(R_0' \cup \dots \cup R_{k-1}') = \bp{(m,n) \in \N \times \N : m \neq n}$.
		Thanks to $\RT^2_k$ there exists an infinite homogeneous set $X \subseteq \N$ for some $R_i'$, for some $i< k$. Then 
		\[
			\forall m, n \in X (m<n \implies n R_i' m).
		\]
		Then define $h: \N \rightarrow \N$ as follows
			\[
				h(k)= 
						\begin{cases}
						   	\mu m (m \in X) &\mbox{if } k=0;\\
							\mu m (m \in X \wedge m > h(k-1)) &\mbox{if } k>0.
						\end{cases} 
			\]
Hence for any $m < n$, we have $h(m)<h(n)$ and since $h(m), h(n) \in X$ we get $h(n) R_i' h(m)$. Therefore $h$ is an infinite transitive decreasing $R_i'$-sequence. By definition of $R_i'$, $h$ is also an infinite decreasing $R_i$-sequence and therefore $H(R_i)$ is ill founded.
\end{proof}

\subsection{Termination Theorem, Weak Ramsey's Theorem and Weak $H$-closure}

In this subsection we are going to prove that the Termination Theorem is equivalent in $\RCAo$ to Weak Ramsey's Theorem. Moreover we can observe that these two results  are equivalent to a weak version of $H$-closure, which we call Weak $H$-closure.

\begin{theorem}[Weak $H$-closure Theorem.]
Given any relations $R_0, \dots, R_{k-1}$ with $k \in \N$, if $R_i$ is well-founded for every $i<k$ then  $\bigcup\bp{R_i : i < k}$
 is $H$-well-founded.
\end{theorem}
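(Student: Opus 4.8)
The plan is to derive this principle from Weak Ramsey's Theorem $\WRT^2_k$ over $\RCAo$, which is the natural direction given that the section aims to place Weak $H$-closure, the Termination Theorem and $\WRT^2_k$ at the same level. First I would reduce the goal to a statement about sequences: by the remark immediately following the definition of $H$-well-foundedness, the union $\bigcup\bp{R_i : i<k}$ fails to be $H$-well-founded precisely when there is an infinite transitive decreasing $(R_0\cup\dots\cup R_{k-1})$-sequence, that is, a function $f\colon\N\to\N$ with $f(j)\,(R_0\cup\dots\cup R_{k-1})\,f(i)$ whenever $i<j$. So I assume such an $f$ exists and work toward a contradiction.

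The central step is to convert $f$ into a colouring suitable for $\WRT^2_k$. For $i<j$ I would define $P(\bp{i,j})$ to be the least $c<k$ with $f(j)\,R_c\,f(i)$. Transitivity of $f$ ensures that $f(j)$ is $(R_0\cup\dots\cup R_{k-1})$-related to $f(i)$, so some such $c$ exists and $P$ is total; since this is a bounded search over $c<k$, the colouring $P\colon[\N]^2\to k$ is $\Delta^0_1$ and hence available in $\RCAo$. Applying $\WRT^2_k$ to $P$ yields a colour $c<k$ together with an infinite homogeneous sequence, whose increasing enumeration $x_0<x_1<x_2<\dots$ satisfies $P(\bp{x_n,x_{n+1}})=c$ for every $n$. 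By the definition of $P$ this means $f(x_{n+1})\,R_c\,f(x_n)$ for all $n$, so $g(n):=f(x_n)$ is an infinite decreasing $R_c$-sequence, contradicting the well-foundedness of $R_c$.

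The conceptual heart of the argument, and the step I expect to be the real content rather than a routine calculation, is the observation that the \emph{weak} Ramsey conclusion already suffices. Full Ramsey would return a homogeneous \emph{set}, but well-foundedness of $R_c$ forbids only an infinite descending chain, and such a chain is exactly what monochromaticity on consecutive pairs produces along the subsequence $g$. This is precisely why Weak $H$-closure lives at the level of $\WRT^2_k$ rather than $\RT^2_k$, in agreement with the intended equivalence with the Termination Theorem and with $\ADS$ in the case $k=2$. The one bookkeeping point to handle with care is the passage from the infinite homogeneous set supplied by $\WRT^2_k$ to its increasing enumeration, so that $x_n<x_{n+1}$ and the colouring convention (namely $f(j)\,R_c\,f(i)$ for $i<j$) is applied in the intended direction; this is routine in $\RCAo$ via minimisation over the infinite set.
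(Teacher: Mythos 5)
Your argument is correct, and it is in substance the implication \vgt{$2\Rightarrow 3$} of Theorem~\ref{theorem: PRT-WRT}: there too one assumes an infinite transitive decreasing sequence $f$ for the union, converts it into a $k$-colouring of indices (the paper's relations $R_i'$ play the role of your minimal-colour map $P$), applies $\WRT^2_k$, and reads off an infinite $R_c$-decreasing sequence along the increasing enumeration of the homogeneous set, contradicting well-foundedness of $R_c$. However, the paper's own justification of the Weak $H$-closure Theorem as stated is different and much shorter: each well-founded $R_i$ is inductively $H$-well-founded by Lemma~\ref{lemma: wfimpliesiwf} and Proposition~\ref{proposition: wfHwf}.1, the $H$-closure Theorem~\ref{theorem: Hclosure} then makes the union inductively $H$-well-founded, and Lemma~\ref{iHwfimpliesHwf} converts this back to classical $H$-well-foundedness. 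The trade-off is exactly the one you point out: the paper's route invokes the full $H$-closure Theorem, which by Theorem~\ref{theorem: HCL-RT} is equivalent to $\RT^2_k$ over $\RCAo$, whereas your route uses only $\WRT^2_k$, which is the reverse-mathematically sharp hypothesis and is what places Weak $H$-closure at the level of the Termination Theorem (and of $\ADS$ when $k=2$) rather than of $\RT^2_2$. Both proofs are sound (the theorem, being unprovable in $\RCAo$ alone, must be derived from some such principle); yours is precisely the \vgt{$2\Rightarrow 3$} half of the equivalence the paper establishes two paragraphs later, so nothing is missing.
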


This result follows from $H$-closure Theorem by applying Proposition \ref{proposition: wfHwf}.1. Observe that thanks to Lemma \ref{iHwfimpliesHwf} the theorem above is equivalent to the statement ``the union of well-founded relations is inductively $H$-well-founded''. While the statement ``the union of inductively-well-founded relations is (inductively) $H$-well-founded'' is stronger by Proposition \ref{proposition: wfiwf}.

\begin{theorem}[$\RCAo$]\label{theorem: PRT-WRT}
	Let $k$ be a natural number. Then the following are equivalent:
	\begin{enumerate}
	\item the Termination Theorem for transition invariants composed by $k$-many relations: given a binary relation $R$, if there exist $k$-many well-founded relations whose union contains the transitive closure of $R$, then $R$ is well-founded; 
	\item $\WRT^2_k$;
	\item Weak $H$-closure Theorem for $k$-many relations: the union of $k$-many well-founded relations is $H$-well-founded.
	\end{enumerate}
\end{theorem}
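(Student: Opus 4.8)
The plan is to close the cycle of implications $1 \Rightarrow 2 \Rightarrow 3 \Rightarrow 1$, each obtained by a direct combinatorial translation that uses nothing beyond $\Delta^0_1$-comprehension, so that the whole argument stays inside $\RCAo$. For $1 \Rightarrow 2$ I would feed a coloring $P:[\N]^2 \to k$ into the Termination Theorem as follows. Take $R = \bp{(n+1,n) : n \in \N}$, whose transitive closure is $R^+ = \bp{(m,n) : m > n}$, and for each $i < k$ set $R_i = \bp{(m,n) : m > n \wedge P(\bp{m,n}) = i}$, a $\Delta^0_1$ definition. Then $R_0 \cup \dots \cup R_{k-1} = R^+$, while $R$ is manifestly not well-founded (witnessed by $0,1,2,\dots$). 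By the contrapositive of the Termination Theorem some $R_i$ is not well-founded, so there is an infinite $R_i$-decreasing sequence $a_0, a_1, \dots$; by definition of $R_i$ this means $a_0 < a_1 < \cdots$ with $P(\bp{a_t,a_{t+1}}) = i$ for all $t$, which is precisely the homogeneous sequence demanded by $\WRT^2_k$.

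For $2 \Rightarrow 3$ I would argue contrapositively. Suppose $R_0,\dots,R_{k-1}$ are well-founded but $\bigcup_{i<k} R_i$ is not $H$-well-founded, so there is an infinite transitive decreasing sequence $f:\N \to \N$, i.e. $f(j)\,(\bigcup_{i<k} R_i)\,f(i)$ whenever $i < j$. Define $P:[\N]^2 \to k$ by letting $P(\bp{i,j})$, for $i<j$, be the least $\ell < k$ with $f(j) R_\ell f(i)$; such $\ell$ exists by transitivity of the sequence, and $P$ is total and obtained by bounded search, hence available in $\RCAo$. Applying $\WRT^2_k$ yields an increasing sequence $n_0 < n_1 < \cdots$ and a color $c$ with $P(\bp{n_t,n_{t+1}}) = c$, hence $f(n_{t+1}) R_c f(n_t)$ for all $t$; then $f(n_0), f(n_1), \dots$ is an infinite $R_c$-decreasing sequence, contradicting the well-foundedness of $R_c$.

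For $3 \Rightarrow 1$, again contrapositively: assume $R_0,\dots,R_{k-1}$ are well-founded with $R^+ \subseteq \bigcup_{i<k} R_i$, and that $R$ is not well-founded, witnessed by an infinite $R$-decreasing sequence $x_0, x_1, \dots$. For $i < j$ we have $x_j R^+ x_i$, hence $x_j\,(\bigcup_{i<k} R_i)\,x_i$, so $x_0, x_1, \dots$ is an infinite transitive decreasing $(\bigcup_{i<k} R_i)$-sequence; this shows $\bigcup_{i<k} R_i$ is not $H$-well-founded, contradicting the Weak $H$-closure Theorem. Therefore $R$ is well-founded, which is statement $1$.

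The mathematical content of each step is light, essentially a reformulation of the same data, so the main obstacle is bookkeeping rather than ideas: one must keep the orientation of the relations straight (later terms of a decreasing sequence sit $R$-below earlier ones, while the indices of the colorings run upward, so the roles of $>$ and $<$ must be tracked through every translation), and one must verify that each auxiliary object — the relations $R_i$, the coloring $P$, and the ranges of the extracted sequences — is definable by $\Delta^0_1$-comprehension or bounded search, so that no appeal to arithmetic comprehension sneaks in and the equivalences genuinely hold over $\RCAo$.
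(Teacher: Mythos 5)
Your proposal is correct and follows essentially the same cycle $1\Rightarrow 2\Rightarrow 3\Rightarrow 1$ as the paper, with the same constructions: the successor relation with color classes of $\bp{(m,n):m>n}$ for $1\Rightarrow 2$, the induced coloring on indices of the transitive decreasing sequence for $2\Rightarrow 3$, and the observation that an infinite $R$-decreasing sequence is a transitive decreasing sequence for $R^{+}\subseteq\bigcup_{i<k}R_{i}$ for $3\Rightarrow 1$. Your explicit choice of the least color $\ell$ in the second step merely makes precise what the paper leaves implicit, and your closing remarks on $\Delta^0_1$-definability of the auxiliary objects cover the only points where care is needed over $\RCAo$.
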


\begin{proof}
\vgt{$1\Rightarrow 2$}. Let $P: [\N]^2 \to k$ be a coloring. For any $i < k$ define a binary relation $R_i$ as follows:
\[
	x R_i y \iff (x > y)  \wedge P(\bp{x,y})= i.
\] 
Assume by contradiction that there are no infinite sequences for any $R_i$. Put $R= \bp{(x+1,x): x \in \N}$ then $\bigcup\bp{R_i : i<k } = R^+ = \bp{(x,y) : x > y}$. Then by applying the Termination Theorem $R$ is well-founded. Contradiction. 

\vgt{$2 \Rightarrow 3$}. Assume that $R_0, \dots, R_{k-1}$ are well-founded and suppose by contradiction that there exists $f: \N \to \N$ such that it is a transitive decreasing $R_0 \cup \dots \cup R_{k-1}$-sequence. For any $i < k$ define $R_i'$ as in the proof of Theorem \ref{theorem: HCL-RT}. By using Weak Ramsey's Theorem instead of Ramsey's Theorem we obtain that $h$ (defined as in the proof of Theorem \ref{theorem: HCL-RT}) is an infinite decreasing sequence for $R_i'$ for some $i<k$. This is a contradiction with the fact that $R_i$ is well-founded.

\vgt{$3\Rightarrow 1$}. Assume that $R_0, \dots, R_{k-1}$ are well-founded and that
\[
	\bigcup\bp{R_i : i < k} \supseteq R^+,
\]
in order to prove that $R$ is well-founded. Assume by contradiction that $R$ is non well-founded. Then $R^+$ is not well-founded, hence since it is transitive, it is not $H$-well-founded. Then also $\bigcup\bp{R_i : i < k}$ is not $H$-well-founded as well. Then by Weak $H$-closure there exists $i< k$ such that $R_i$ is not well-founded. Contradiction.
\end{proof}

\begin{remark}
By using the same argument we can prove that for any natural number $k$ the following are equivalent over $\RCAo$.
\begin{itemize}
\item For any binary relation $R$, if there exist $k$-many inductively well-founded relations whose union contains the transitive closure of $R$ then $R$ is well-founded.
\item The union of $k$-many inductively well-founded relations is $H$-well-founded.
\end{itemize}
\end{remark}

Observe that  by Proposition \ref{Proposition: WRT-CAC-ADS}.2 and since $\RT^2_2 > \CAC$ \cite{Hirschfeldt}, we have that the Termination Theorem for transition invariant composed of $k$-many relations is strictly weaker that Ramsey's Theorem for pairs and $k$-many colors. Hence we can provide an answer to \cite[Open Problem 2]{Gasarch} posed by Gasarch. There is no program proved to be terminating by the Termination Theorem, such that the proof of this fact requires the full Ramsey Theorem for pairs.

Moreover notice that $\forall k \WRT^2_k$ is provable from $\CAC$ plus the full induction. On the other hand $\CAC$ plus the full induction does not imply $\RT^2_2$ (and with more reason $\forall k \RT^2_k$), since the separation between $\CAC$ and $\RT^2_2$ provided in  \cite{Hirschfeldt} is done over $\omega$-models \footnote{Models whose first order part is standard (e.g. \cite{SOSOA}).}, which always enjoy the full induction. We conclude that $\forall k \WRT^2_k$ does not imply $\forall k \RT^2_k$ (even $\RT^2_2$). Thanks to Theorem \ref{theorem: PRT-WRT}, we get a negative answer to \cite[Open Problem 3]{Gasarch} posed by Gasarch:  is the Termination Theorem equivalent to full Ramsey Theorem for pairs? In fact the full Termination Theorem is equivalent to the full Weak Ramsey Theorem which is strictly weaker than the full Ramsey Theorem over $\RCAo$.

\section{Weight functions, bounds, and $H$-bounds}\label{Section: weightbounds}
In the study of the termination analysis, it is important to investigate a bound for the number of steps required by a program to terminate by analysing the structure of the program. For this purpose, we need a formal notion of bounds.

\begin{defi}\label{def:bounds}
Let $R$ be a binary relation on $S$.
\begin{itemize}
	\item A weight function for $R$ is a function $f: S \to \N$ such that for any $x,\ y \in S$
	\[
		x R y \implies f(x) < f(y).
	\]
	We say that $R$ has height $\omega$ if there exists a weight function for $R$.
    \item A bound for $R$ is a function $f:S\to \N$ such that for any $R$-decreasing sequence $\langle a_{0}, \dots, a_{l-1} \rangle$, $l\le f(a_{0})$, \textit{i.e.}, any decreasing $R$-sequence starting from $a$ is shorter than $f(a)$.
    \item A $H$-bound for $R$ is a function $f:S\to \N$ such that for any $R$-decreasing transitive sequence $\langle a_{0}, \dots, a_{l-1} \rangle$, $l\le f(a_{0})$, \textit{i.e.}, any decreasing transitive $R$-sequence starting from $a$ is shorter than $f(a)$.
  \end{itemize}
\end{defi}

It is easy to see that in $\ACAo$,  $R$ has a bound if and only if $R$ has a weight function. However one of the implications cannot be proved in $\RCAo$.

\begin{prop}[$\ACAo$]
Given a binary relation $R$. $R$ has a bound if and only if $R$ has a weight function.
\end{prop}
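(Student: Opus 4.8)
The plan is to prove both directions over $\ACAo$, using the fact that the two notions of height/termination bound coincide once we can form ranges of functions and do arithmetical induction. The statement is: a binary relation $R$ on $S$ has a bound if and only if $R$ has a weight function.

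\medskip

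For the direction from weight function to bound, suppose $f\colon S\to\N$ satisfies $xRy\implies f(x)<f(y)$. I would simply take $f$ itself (or a suitable adjustment of it) as the bound. Indeed, if $\langle a_0,\dots,a_{l-1}\rangle$ is $R$-decreasing, meaning $a_{i+1}Ra_i$ for each $i$, then $f(a_{i+1})<f(a_i)$, so $f(a_0)>f(a_1)>\dots>f(a_{l-1})$ is a strictly decreasing sequence of $l$ natural numbers bounded by $f(a_0)$; hence $l\le f(a_0)$. This step needs only the definition and a one-line argument, and in fact goes through already in $\RCAo$ — which is presumably the implication the preceding remark says is the cheap one.

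\medskip

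For the converse, suppose $f$ is a bound for $R$; I want to construct a weight function. The natural candidate is to send each $x\in S$ to the supremum (maximum) of the lengths of $R$-decreasing sequences starting at $x$, i.e. define $g(x)$ to be the length of the longest $R$-decreasing sequence beginning with $x$. Since $f$ bounds all such lengths by $f(x)$, this maximum exists and is well-defined; this is exactly where $\ACAo$ (arithmetical comprehension, to collect the set of lengths of decreasing sequences and extract their maximum) is needed, because quantifying over all finite $R$-decreasing sequences starting at a given point is an arithmetical, not $\Delta^0_1$, condition. One then checks that if $xRy$, any decreasing sequence starting at $x$ can be prepended by $y$ to yield a strictly longer decreasing sequence starting at $y$, so $g(y)>g(x)$; thus $g$ is a weight function.

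\medskip

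I expect the main obstacle to be the converse direction, specifically the comprehension step: forming the function $g$ requires collecting, for each $x$, the arithmetically-defined set $\{\,l : \exists\text{ an }R\text{-decreasing sequence of length }l\text{ starting at }x\,\}$ and taking its maximum, which is why $\RCAo$ alone does not suffice and $\ACAo$ is invoked. Care must also be taken that $g$ is genuinely a total function on $S$ (every point has a longest decreasing sequence, using the bound $f$ to guarantee finiteness) and that the strict inequality $g(x)<g(y)$ under $xRy$ holds — this last point uses that the empty and length-one sequences are handled correctly so that prepending always strictly increases length.
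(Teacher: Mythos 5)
Your proposal is correct and follows essentially the same route as the paper: the weight-function-to-bound direction is the immediate one-line argument, and the converse defines the weight function as $x\mapsto\max\{l : l$ is the length of a decreasing $R$-sequence from $x\}$, using the given bound to guarantee finiteness and arithmetical comprehension to form the function. The only nitpick is the off-by-one in the first direction ($f(a_0)\ge l-1$, so one should take $f+1$ as the bound), which you already flag with ``a suitable adjustment'' and which the paper glosses over as well.
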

\begin{proof}
\vgt{$\Rightarrow$}. If $R$ has a bound $f:S \to \N$ then we can define $f^*:S \to \N$ as follows
\[
	f^*(x) = \max \{ l : l \mbox{ is the length of a decreasing $R$-sequence from } x\}.
\]
For any $x \in S$, $f^*(x) \in \N$ since $f^*(x) \leq f(x)$ and $f^*$ is a weight function by definition. 

\vgt{$\Leftarrow$}. If $R$ has a weight function $f:S \to \N$, then if $\langle a_i : i\in l \rangle$ is a decreasing $R$-sequence, $f(a_i) \geq l$. 
\end{proof}

The second implication is in $\RCAo$, while the first one requires $\Pi^0_1$-comprehension. If we assume that $R$ is finitely branching, i.e. there exists $\delta:S \to \mathcal{P}_{{<}\omega}(S)=\bp{X \subseteq S : |X|<\omega}$ such that $xRy$ if and only if $x \in \delta(y)$, then also the first implication turns out to be provable in $\RCAo$. In fact 
\[
	f^*(x) = \max \{ l : \delta^l(x)\neq \emptyset\},
\]
where $\delta^{l+1}(x) = \bigcup\{\delta(y): y \in \delta^{l}(x)\}$. This set exists for any $l$ since it is a finite set. As above $f^*(x)$ is bounded by $f(x)$, therefore $f^*(x) \in \N$.

In the general case, we have the following.
\begin{thm} \label{theorem: weightbounds}
The following are equivalent over $\RCAo$.
\begin{enumerate}
 \item $\WKLo$.
 \item For any relation $R\subseteq S^{2}$, $R$ has a bound if and only if $R$ has a weight function.
\end{enumerate}
\end{thm}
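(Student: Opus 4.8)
The statement has two halves, and one of them is already essentially available: as noted just before the theorem, "$R$ has a weight function $\Rightarrow R$ has a bound" is provable in $\RCAo$. So the plan reduces to two tasks: prove "$R$ has a bound $\Rightarrow R$ has a weight function" inside $\WKLo$ (giving $1\Rightarrow 2$), and derive $\WKLo$ from the full biconditional (giving $2\Rightarrow 1$). The naive $\ACAo$-proof builds the exact rank function $f^{*}(x)=\max\bp{l : \text{there is a decreasing } R\text{-sequence of length } l \text{ from } x}$, whose graph is genuinely $\Sigma^0_1\wedge\Pi^0_1$; the whole point is to avoid comprehension for this graph and instead extract a (possibly non-canonical) weight function by a compactness argument.

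For $1\Rightarrow 2$, suppose $f$ is a bound for $R$ (so $f^{*}(x)\le f(x)$ for every $x$, where $f^{*}(x)$ exists pointwise by bounded $\Sigma^0_1$-comprehension). I would form the bounded tree of \emph{partial weight functions}
\[
T=\bp{\sigma\in\Seq : \forall i<\lh(\sigma)\,(\sigma(i)\le f(i))\ \wedge\ \forall i,j<\lh(\sigma)\,(i\mathrel{R}j\to \sigma(i)<\sigma(j))}.
\]
This $T$ is a $\Delta^0_1$ set (all quantifiers are bounded, using the given sets $R$ and $f$), it is closed under initial segments, and it is bounded by $f$. It is infinite: for each $n$ the sequence $\ap{f^{*}(0),\dots,f^{*}(n-1)}$ lies in $T$, since $f^{*}(i)\le f(i)$ and $i\mathrel{R}j$ forces $f^{*}(i)<f^{*}(j)$ (prepend the $R$-successor to a longest decreasing sequence). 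By bounded König's lemma, a consequence of $\WKLo$, $T$ has an infinite path $g$, and the defining condition of $T$ at each level shows $i\mathrel{R}j\Rightarrow g(i)<g(j)$; thus $g$ is a weight function. The merit of this route is that it never asserts the $\Sigma^0_1\wedge\Pi^0_1$ graph of $f^{*}$ exists, so it stays within $\WKLo$ rather than $\ACAo$.

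For $2\Rightarrow 1$, I would reduce from $\Sigma^0_1$-separation, which is equivalent to $\WKLo$ over $\RCAo$. Given disjoint $\Sigma^0_1$ sets $U_0=\bp{n:\exists s\,\theta_0(n,s)}$ and $U_1=\bp{n:\exists s\,\theta_1(n,s)}$ with $\theta_i$ a $\Delta^0_1$ matrix, build a relation $R$ on vertices $a_n,b_n$ together with fresh ``witness'' vertices $e_{n,s},e'_{n,s}$. When $s$ is the \emph{least} stage with $\theta_0(n,s)$, put the edges $a_n\mathrel{R}e_{n,s}$ and $e_{n,s}\mathrel{R}b_n$ (forcing $g(a_n)<g(b_n)$ for any weight function $g$); when $s$ is the least stage with $\theta_1(n,s)$, put $b_n\mathrel{R}e'_{n,s}$ and $e'_{n,s}\mathrel{R}a_n$ (forcing $g(b_n)<g(a_n)$). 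The ``least witness'' clause is $\Delta^0_1$, so $R$ is a genuine set; disjointness of $U_0,U_1$ means at most one gadget fires per $n$, distinct $n$ use disjoint vertices, and every $R$-descending chain has length at most $3$, so the constant function $4$ is a bound for $R$. Applying hypothesis $2$ yields a weight function $g$, and $Z:=\bp{n:g(a_n)<g(b_n)}$ exists by $\Delta^0_1$-comprehension from $g$; by construction $U_0\subseteq Z$ and $U_1\cap Z=\emptyset$, so $Z$ separates $U_0$ and $U_1$.

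The main obstacle is the $1\Rightarrow 2$ direction: one must resist building $f^{*}$ directly (that would cost $\ACAo$) and instead recognize that \emph{some} weight function can be captured as a path through a bounded tree, where the only real content is proving that tree infinite — which requires the bound hypothesis together with pointwise existence of the rank values. In the reversal the delicate point is keeping $R$ a $\Delta^0_1$ relation while encoding $\Sigma^0_1$ membership; this is exactly what the stage-indexed fresh witness vertices and the ``least witness'' trick are designed to handle, while simultaneously guaranteeing the uniform bound that makes the hypothesis applicable.
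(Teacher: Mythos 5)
Your proposal is correct and follows essentially the same route as the paper: for $1\Rightarrow 2$ the paper also forms the tree of partial weight functions dominated by the given bound, shows it is infinite because each finite restriction $R\cap S_n^2$ admits a weight function below the bound (your pointwise $f^{*}$), and applies bounded K\"onig's Lemma; for $2\Rightarrow 1$ the paper likewise reverses via $\Sigma^0_1$-separation with a three-element gadget (on $\N\times 4$, with middle witness vertices playing the role of your $e_{n,s}$) that forces $g(a_n)<g(b_n)$ or $g(b_n)<g(a_n)$ and then reads off the separating set by $\Delta^0_1$-comprehension.
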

\begin{proof}
We reason within $\RCAo$.
Without loss of generality, we may assume that $S=\N$.
Let $S_{n}=\{0,\dots,n-1\}$ and $R_{n}=R\cap {S_{n}}^{2}$ then the pair $S_{n}, R_{n}$ associates two finite subsets of $S$ and $R$ respectively.
One can easily check the following over $\RCAo$:
\begin{itemize}
 \item $f:S\to \N$ is a weight function on $S$ if and only if $f\rest S_{n}$ is a weight function on $S_{n}$ for every $n\in\N$.
 \item $f:S\to \N$ is a bound on $S$ if and only if $f\rest S_{n}$ is a bound on $S_{n}$ for every $n\in\N$.
 \item If $R_{n}$ has a bound $h:S_{n}\to\N$, then $R_{n}$ has a weight function $f:S_{n}\to\N$ such that $f\le h$ (as the above proposition).
\end{itemize}

\vgt{$\Downarrow$}. Assume that $R\subseteq \N^{2}$ has a bound $h:\N\to\N$.
Define a tree $T\subseteq\N^{<\N}$ as follows:
\[ \sigma\in T\iff \lh(\sigma)=n \wedge \sigma:S_{n}\to \N\text{ is a weight function on $S_{n}$}\wedge \A k\, \sigma(k)\le h(k).\]
Then, by the last point above, this $T$ is infinite.
Thus, by bounded K\"onig's Lemma \footnote{Bounded K\"onig's Lemma is equivalent to $\WKLo$ \cite[Lemma IV.1.4]{SOSOA}.}, $T$ has an infinite path $f:\N\to\N$ such that $f\le h$.
This $f$ is a weight function for $R$.

\vgt{$\Uparrow$}. We show (the restricted version of) $\Sigma^{0}_{1}$-separation.\footnote{$\Sigma^{0}_{1}$-separation is equivalent to $\WKLo$ \cite[Lemma IV.4.4]{SOSOA}.}
Let $p,q:\N\to\N$ be one-to-one functions such that $\rng(p)\cap\rng(q)=\emptyset$.
We want to find a set $X$ such that $\rng(p)\subseteq X\subseteq \N\setminus\rng(q)$.
Let $S=\N\times 4$. We claim there is some relation $R\subseteq S^{2}$ such that $(p(n),3) R (n,1) R (p(n),0)$ and $(q(n),0) R (n,2) R(q(n),3)$, and there is no other relation. We may prove in $\RCAo$ that $R$ exists because $R$ has a $\Delta^0_1$ definition:
\begin{align*}
 (n,i)R(m,j)\iff& (p(m)=n\wedge (i,j)=(3,1))\vee(p(n)=m\wedge (i,j)=(1,0))\\
& \vee (q(n)=m\wedge (i,j)=(2,3))\vee(q(m)=n\wedge (i,j)=(0,2)),
\end{align*}
i.e. if $p(n)=m$ then $(m,3)R(n,1)R(m,0)$, if $q(n)=m$ then $(m,0)R(n,2)R(m,3)$, and there is no other relation.
All $R$-sequences have at most three elements, because no element of $S$ has both the form $p(n)$ and the form $q(n’)$, for any $n, n’ \in \N$. Put $h:S\to\N$ as $h((n,i))=2$, then $h$ is a bound for $R$.
By (2), take a weight function $f:S\to \N$.
If $m$ is in $\rng(p)$ then $m = p(n)$ for some $n$ and there is a $R$-sequence $(m,3) R (n,1) R (m,0)$, hence $f((m,3)) < f((m,0))$. If $m$ is in $\rng(q)$ then $m = q(n)$ for some $n$ and there is a $R$-sequence $(m,0) R (n,2) R (m,3)$, hence $f((m,0)) < f((m,3))$. Thus, $X=\{m: f((m,3))<f((m,0))\}$ is a set separating $\rng(p)$ and $\rng(q)$.
\end{proof}

\section{Termination analysis and proof-theoretic strength}\label{Section: application}
In this section we apply the result we obtained about the reverse mathematical strength of the Termination Theorem to get bounds. In order to do that we need to recall some classical results. As standard we denote $\Sigma^0_1$-induction as $\II$ and with $\BII$ the bounding principle for $\Sigma^0_2$-formulas \cite{HirschfeldtRM}.

\begin{thm}[Parsons 1970, see e.g., \cite{prov-rec-func}] \label{theorem: parson}
The class of provable recursive functions of $\II$ is exactly the same as the class of primitive recursive functions.
\end{thm}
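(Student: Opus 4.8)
The plan is to prove the two inclusions separately; the first is routine and the second carries all the weight.

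First I would show that every primitive recursive function is provably recursive in $\II$, arguing by induction on the generation of the primitive recursive functions. The initial functions (zero, successor, projections) have trivially $\Delta^0_0$ graphs and are provably total, and closure under composition is immediate in first-order logic once the graphs of the components are available. The only step with content is closure under primitive recursion: given provably total $g,h$ with $\Sigma^0_1$ graphs and the schema $f(0,\vec x)=g(\vec x)$, $f(n+1,\vec x)=h(n,f(n,\vec x),\vec x)$, I would take as the graph of $f$ the $\Sigma^0_1$ formula asserting the existence of a coded finite sequence $s$ (using the $\Delta^0_1$ sequence coding, e.g. G\"odel's $\beta$-function, available already in $\II$) that records $f(0,\vec x),\dots,f(n,\vec x)$ and satisfies the recursion equations. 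Provable totality, $\forall n\,\exists s\,(\dots)$, then follows by $\Sigma^0_1$-induction on $n$, which is exactly the induction principle of $\II$.

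The converse inclusion --- every function provably recursive in $\II$ is primitive recursive --- is the main obstacle. Suppose $\theta(x,y)\in\Sigma^0_1$ with $\II\vdash\forall x\,\exists!\,y\,\theta(x,y)$ defines $f$. The totality statement is $\Pi^0_2$, and the strategy is to reduce to two facts: that $\II$ is $\Pi^0_2$-conservative over primitive recursive arithmetic $\mathrm{PRA}$, and that every $\Pi^0_2$-theorem of $\mathrm{PRA}$ admits a primitive recursive witnessing function. Granting these, $\mathrm{PRA}$ proves $\forall x\,\exists y\,\theta(x,y)$, one extracts a primitive recursive bound $b$ on a witness for the (bounded) matrix, and then $f$ is computed by bounded search below $b$, hence is primitive recursive.

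The real content therefore lies in the conservativity-plus-extraction step, and this is where I expect the difficulty to be. Proof-theoretically, I would formalize $\II$ in a cut-elimination-friendly sequent (Tait) calculus with the induction axioms replaced by an induction rule, and perform cut elimination until the only remaining complexity is carried by induction inferences on $\Sigma^0_1$ formulas; a Herbrand-style reading of the resulting proof of $\forall x\,\exists y\,\theta(x,y)$ then produces a witnessing term obtained by composing, for each induction inference, a primitive recursive iteration of a fixed step function. The delicate point --- the reason the extracted bound stays primitive recursive rather than escaping into Ackermann-type growth --- is that all induction formulas are uniformly $\Sigma^0_1$: each induction contributes a single layer of ordinary primitive recursion (iterate a fixed-complexity step a variable number of times) rather than a diagonalization across the recursion operator, and since any proof contains only finitely many induction inferences, their witnessing terms compose into one primitive recursive function. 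Equivalent routes I would be prepared to take are G\"odel's Dialectica interpretation, under which (the intuitionistic version of) $\II$ translates into a quantifier-free calculus whose type-$0$ definable functions are exactly the primitive recursive ones, or a purely model-theoretic argument for the $\Pi^0_2$-conservativity of $\II$ over $\mathrm{PRA}$.
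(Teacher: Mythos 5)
The paper offers no proof of this statement to compare against: it is quoted as a classical result of Parsons and used as a black box (the citation is the entire justification). Your outline is nonetheless a correct sketch of the standard argument. The forward inclusion by induction on the generation of the primitive recursive functions, with $\beta$-coded computation sequences and provable totality by $\Sigma^0_1$-induction, is exactly right, and for the converse each of the three routes you name --- free-cut elimination down to $\Sigma^0_1$-induction inferences followed by Herbrand-style witnessing, negative translation plus Dialectica into the type-$0$-recursion fragment of G\"odel's $T$, or a model-theoretic proof of $\Pi^0_2$-conservativity over $\mathrm{PRA}$ --- is a genuine published proof, and you correctly locate the delicate point (each induction on a $\Sigma^0_1$ formula contributes one layer of ordinary iteration, and the finitely many such inferences in a fixed proof compose without diagonalizing into Ackermannian growth). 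The only thing to make explicit is that your reduction ``conservativity over $\mathrm{PRA}$ plus witness extraction from $\mathrm{PRA}$'' silently invokes a second witnessing theorem, namely that the provably recursive functions of $\mathrm{PRA}$ are primitive recursive; this is much easier than the theorem itself (quantifier-free induction, Herbrand's theorem), but it is a separate lemma and should be stated as one rather than folded into the conservativity step.
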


\begin{thm}[Paris/Kirby\cite{Paris-Kirby}]
$\BII$ is a $\Pi^{0}_{3}$-conservative extension of $\II$.
\end{thm}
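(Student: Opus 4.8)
The statement is the classical Paris--Kirby conservation theorem, and I would prove it model-theoretically via the contrapositive. Fix a $\Pi^0_3$ sentence $\varphi$ and suppose $\II \not\vdash \varphi$; by the completeness theorem it then suffices to produce a model of $\BII + \neg\varphi$. Write $\varphi \equiv \forall x\, \psi(x)$ with $\psi \in \Sigma^0_2$, so that $\neg\varphi \equiv \exists x\, \theta(x)$ with $\theta \in \Pi^0_2$, say $\theta(x) \equiv \forall y\, \sigma(x,y)$ for some $\sigma \in \Sigma^0_1$. Choose a countable $M \models \II + \neg\varphi$ and fix $a \in M$ with $M \models \theta(a)$. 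We may assume $M$ is nonstandard, since otherwise $M$ is the standard model, which satisfies $\BII$ outright, and there is nothing to prove.

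The plan is to find a proper cut $I \subsetneq_{e} M$ with $a \in I$ such that (i) $I$ is a $\Sigma^0_1$-elementary submodel, $I \preceq_{\Sigma^0_1} M$, and (ii) $I \models \BII$. Condition (i) is exactly what transfers the witness down: for every $y \in I$ we have $M \models \sigma(a,y)$, and since $\sigma$ is $\Sigma^0_1$ and $I \preceq_{\Sigma^0_1} M$ the same fact holds in $I$; as this is true for all $y \in I$ we obtain $I \models \forall y\, \sigma(a,y)$, i.e. $I \models \theta(a)$ and hence $I \models \neg\varphi$. Combined with (ii) this yields the desired model $I \models \BII + \neg\varphi$. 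The reason $I$ can be made to satisfy $\BII$ is the Paris--Kirby characterization that a countable structure satisfies $\BII$ precisely when it admits a proper $\Sigma^0_1$-elementary end extension to a model of the base theory: realizing $M$ (which already models $\II \supseteq \mathrm{I}\Delta_0$) as such an end extension of a cut $I$ containing $a$ forces $I \models \BII$.

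I would obtain the cut by indicator theory. One introduces a $\Sigma^0_1$-definable indicator $Y$ whose largeness on an interval certifies the presence, inside that interval, of a $\Sigma^0_1$-elementary cut modelling $\BII$; then, using $\Sigma^0_1$-overspill (available already in $\II$), one finds $b > a$ on which $Y$ takes a nonstandard value and extracts a cut $I$ with $a \in I$. The main obstacle is precisely this step: designing the indicator and verifying within $\II$ both that it is unbounded and that its large values genuinely produce $\Sigma^0_1$-elementary $\BII$-cuts. This combinatorial core is where the extra strength of $\BII$ over $\II$ is localized, and it also explains why $\Pi^0_3$ is the right conservativity bound, since only the single block $\forall y\, \sigma$ of the $\Pi^0_2$ matrix $\theta$ has to be preserved and $\Sigma^0_1$-elementarity is exactly enough for that. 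As an alternative one can bypass models and argue proof-theoretically, transforming a $\BII$-proof of a $\Pi^0_3$ sentence into an $\II$-proof by a Herbrand-style or functional-interpretation analysis that eliminates the collection instances; but the model-theoretic route above is the most transparent.
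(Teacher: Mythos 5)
This theorem is imported by the paper as a black box --- it is stated with a citation to Paris--Kirby and no proof is given --- so there is no in-paper argument to compare against; I can only assess your sketch as a proof of the classical result. Your overall architecture is the standard one and the routine steps are handled correctly: the reduction via completeness to exhibiting a model of $\BII+\neg\varphi$; the observation that a $\Pi^0_2$ fact $\theta(a)\equiv\forall y\,\sigma(a,y)$ with $\sigma\in\Sigma^0_1$ transfers downward to any cut $I\preceq_{\Sigma^0_1}M$ containing $a$; and the use of the easy direction of the Kirby--Paris characterization, namely that a proper $\Sigma^0_1$-elementary cut of a model of $\mathrm{I}\Delta_0$ satisfies $\BII$ (this direction needs no countability and is proved by a short $\Delta_0$-overspill argument at a point above the cut).

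The genuine gap is the step you yourself flag as ``the main obstacle'': the existence, in an arbitrary countable nonstandard $M\models\II$ and above a prescribed element $a$, of a \emph{proper} cut $I\ni a$ with $I\preceq_{\Sigma^0_1}M$. This is not a technical loose end --- it is the entire mathematical content of the theorem. The naive construction (close $a$ under bounds for $\Sigma^0_1$-witnesses with parameters below the current stage) requires, at each stage, an instance of strong $\Sigma^0_1$-collection, which is equivalent to $\BII$ itself and hence unavailable in $M$; and nothing guarantees a priori that the resulting cut is proper rather than all of $M$. Circumventing this is exactly what Paris's indicator argument does, and your proposal neither specifies the indicator nor proves the two properties on which everything rests (unboundedness of the indicator provably in $\II$, and that nonstandard values yield $\Sigma^0_1$-elementary $\BII$-cuts). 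As written, the proposal is an accurate roadmap of the known proof rather than a proof. Two smaller points: the statement in the paper lives in the second-order setting of $\RCAo$, so $\Pi^0_3$ sentences may carry free set parameters and one must also cut down the second-order part to $\{X\cap I: X\in\mathcal{S}\}$ and check that $\Delta^0_1$-comprehension survives --- your sketch is silent on this; and the claim that the conservation could alternatively be obtained by a Herbrand-style elimination of collection is asserted but not substantiated, so it cannot stand in for the missing construction.
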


\begin{thm}[Chong/Slaman/Yang\cite{CSY2012}]
$\WKLo+\CAC$ is a $\Pi^{1}_{1}$-conservative extension of $\BII$.
\end{thm}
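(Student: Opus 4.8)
The plan is to reduce the statement to a model-extension problem and then solve it by a forcing iteration that leaves the first-order universe untouched. Recall that every $\Pi^1_1$ sentence has the form $\forall X\,\varphi(X)$ with $\varphi$ arithmetical, and that the truth of $\varphi(A)$ in a second-order structure depends only on its first-order part and the parameter $A$. Hence it suffices to prove the following extension lemma: every countable model $(M,\mathcal{X})\models\BII$ has a second-order part $\mathcal{Y}\supseteq\mathcal{X}$, with the same first-order part $M$, such that $(M,\mathcal{Y})\models\WKLo+\CAC$. Granting this, if $\WKLo+\CAC$ proved a $\Pi^1_1$ sentence $\forall X\,\varphi(X)$ not provable in $\BII$, I could take a countable $(M,\mathcal{X})\models\BII$ with a witness $A\in\mathcal{X}$ for $\neg\varphi(A)$; extending to $(M,\mathcal{Y})$ preserves both $A$ and the arithmetical truth of $\neg\varphi(A)$, contradicting $(M,\mathcal{Y})\models\WKLo+\CAC$.

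To construct $\mathcal{Y}$ I would fix (externally) an enumeration of all tasks that can arise in the model being built --- for each infinite binary tree, adjoin a path (forcing $\WKLo$); for each infinite partial order, adjoin an infinite chain or antichain (forcing $\CAC$) --- and iterate for $\omega$ steps. Starting from $\mathcal{X}_0=\mathcal{X}$, at stage $s$ I add a solution $G_s$ to the $s$-th task and let $\mathcal{X}_{s+1}$ be the Turing ideal generated by $\mathcal{X}_s\cup\{G_s\}$ (closure under $\oplus$ and $\le_T$), arranging by bookkeeping that every task is eventually served. Setting $\mathcal{Y}=\bigcup_s\mathcal{X}_s$ yields a Turing ideal meeting every $\WKL$- and $\CAC$-requirement, so $(M,\mathcal{Y})\models\RCAo+\WKLo+\CAC$, and since no elements are ever added to $M$ the first-order part is unchanged.

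The whole difficulty is to keep $\BII$ true of $M$ at every stage: adjoining $G_s$ must not create a new $\Sigma^0_2$-definable (over the parameters of $\mathcal{X}_{s+1}$) function witnessing a failure of $\Sigma^0_2$-bounding on a cut of $M$. This is where the solutions must be chosen to be \emph{low} over the available data: for the tree instances one uses the low-basis construction, producing a path whose jump is controlled by the jump of the oracle; for the poset instances one reduces, via the standard stable/cohesive decomposition of $\CAC$ (through $\ADS$), to problems that likewise admit jump-controlled solutions. The key preservation lemma --- and the genuine obstacle, which is the heart of the Chong--Slaman--Yang analysis --- is to show that such a jump-controlled $G_s$ preserves $\BII$: because the forcing conditions have bounded arithmetical complexity and $G_s$ is low over $\mathcal{X}_s$, any $\Sigma^0_2$ formula with parameters in $\mathcal{X}_{s+1}$ can be reflected to a $\Sigma^0_2$ formula with parameters already in $\mathcal{X}_s$, so its instance of bounding follows from bounding already valid in $(M,\mathcal{X}_s)$.

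The delicate point throughout is that $M$ may fail $\III$, so both the construction and its verification must be carried out using only the limited induction and bounding that $\BII$ supplies; designing the forcing and the low-solution arguments so that this restricted amount of induction suffices is exactly the technically hard part, and is what distinguishes this result from the easier $\Pi^1_1$-conservativity of $\WKLo$ over $\RCAo$.
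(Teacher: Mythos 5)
The paper does not prove this statement at all: it is imported verbatim from Chong--Slaman--Yang \cite{CSY2012} as a black-box ingredient for the conservation corollary in Section \ref{Section: application}, so there is no internal proof to compare yours against. Judged on its own terms, your proposal correctly sets up the standard architecture (reduce $\Pi^1_1$-conservativity to an $\omega$-step extension of a countable model of $\BII$ by adding solutions to all $\WKL$- and $\CAC$-instances while fixing the first-order part), but it has a genuine gap: the ``key preservation lemma'' you name is not an auxiliary step you may defer --- it \emph{is} the theorem. Everything before it (the reduction to model extension, the bookkeeping, the union of Turing ideals) is routine; the entire mathematical content of \cite{CSY2012} is the construction of $\CAC$-solutions that provably preserve $\Bii$ in a model that may satisfy $\neg\Iii$, and your sketch simply asserts that this can be done.

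Moreover, the specific mechanism you propose does not work as stated. By a theorem of Herrmann there is a computable infinite partial order with no infinite $\Delta^0_2$ chain or antichain, so $\CAC$-instances do not in general admit low (jump-controlled in your sense) solutions; the reduction through the stable/cohesive decomposition of $\CAC$ via $\ADS$ produces solutions only after an iteration whose intermediate oracles must themselves be controlled, and carrying out that decomposition, the relevant basis theorems, and the lowness calculations inside a nonstandard model with only $\Bii$ (no $\Iii$) is precisely the delicate part. Your reflection argument (``any $\Sigma^0_2$ formula with parameters in $\mathcal{X}_{s+1}$ can be reflected to one with parameters in $\mathcal{X}_s$'') presupposes a formalized lowness statement that must itself be proved with the limited induction available. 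So the proposal is a correct high-level roadmap that locates the obstacle accurately, but it does not surmount it, and the shortcut it gestures at (plain low solutions) is known to be unavailable.
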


Thus, we have the following.
\begin{cor}
The class of provable recursive functions of $\WKLo+\CAC$ is exactly the same as the class of primitive recursive functions.
\end{cor}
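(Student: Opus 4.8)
The plan is to read the statement as a piece of bookkeeping built on the three cited conservation theorems, once one records that membership in the class of provably recursive functions is decided by a $\Pi^0_2$ sentence. Concretely, a total function $f\colon\N\to\N$ is a provably recursive function of a theory $T$ exactly when there is an index $e$ with $\{e\}=f$ such that $T$ proves the totality statement $\A x\,\E y\,\mathrm{T}(e,x,y)$, where $\mathrm{T}$ is Kleene's $\Sigma^0_1$ predicate; this totality statement is $\Pi^0_2$. Hence whether a given $f$ lies in the class of provably recursive functions of $T$ depends only on which $\Pi^0_2$ sentences $T$ proves. The key chain of inclusions I would invoke is $\Pi^0_2\subseteq\Pi^0_3\subseteq\Pi^1_1$, so that each cited conservation result restricts in particular to $\Pi^0_2$-conservation.

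First I would establish that every provably recursive function of $\WKLo+\CAC$ is primitive recursive. Suppose $\WKLo+\CAC\vdash\A x\,\E y\,\mathrm{T}(e,x,y)$. Since this totality statement is $\Pi^0_2$ and hence $\Pi^1_1$, the Chong/Slaman/Yang theorem yields $\BII\vdash\A x\,\E y\,\mathrm{T}(e,x,y)$. As the same sentence is also $\Pi^0_3$, the Paris/Kirby theorem yields $\II\vdash\A x\,\E y\,\mathrm{T}(e,x,y)$. By Parsons' theorem the function computed by $e$ is then primitive recursive.

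For the converse I would run the chain upward, using that each of the three theories extends the previous one. If $f$ is primitive recursive, then by Parsons' theorem $\II$ proves the corresponding totality statement; since $\BII$ is an extension of $\II$ and $\WKLo+\CAC$ is an extension of $\BII$ (both facts are built into the cited conservation theorems), $\WKLo+\CAC$ proves it as well, so $f$ is a provably recursive function of $\WKLo+\CAC$. Combining the two directions gives the asserted equality of the two classes.

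I do not expect any genuine obstacle here: the only things that need care are the syntactic complexity bookkeeping, namely verifying that provable totality is a $\Pi^0_2$ matter and that $\Pi^0_2$ sits inside both $\Pi^0_3$ and $\Pi^1_1$, together with the observation that the three theories form an increasing tower so that the conservation results compose. The latter is immediate from the statements of the Paris/Kirby and Chong/Slaman/Yang theorems, which already present the stronger theory as an \emph{extension} of the weaker one; in particular $\WKLo+\CAC\vdash\BII$ is part of their hypotheses and need not be reproved.
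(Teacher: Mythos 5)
Your proof is correct and is exactly the argument the paper intends: the corollary is stated as an immediate composition of the three preceding theorems, using that provable totality of a recursive function is a $\Pi^0_2$ (hence $\Pi^0_3$ and $\Pi^1_1$) statement, so the two conservation results transfer it down to $\II$ and Parsons' theorem finishes, while the converse follows from the theories forming an increasing tower. No difference in approach worth noting.
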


On the other hand, by Theorem~\ref{theorem: PRT-WRT} and Proposition \ref{Proposition: WRT-CAC-ADS}, we have the following.
\begin{prop}
The following is provable within $\WKLo+\CAC$.
\begin{itemize}
 \item[$(\dag)$] any relation $R$ for which there exists a disjunctively well-founded transition invariant composed of $k$-many relations $R_1 \cup \dots \cup R_{k-1} \supseteq R^+$ with bounds is well-founded.
\end{itemize}
\end{prop}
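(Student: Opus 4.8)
The plan is to reduce $(\dag)$ to the Termination Theorem for transition invariants composed of $k$-many relations, which is already available in $\WKLo+\CAC$. Indeed, by Proposition~\ref{Proposition: WRT-CAC-ADS}.2 we have $\CAC \implies \WRT^2_k$, and by the equivalence of items~1 and~2 in Theorem~\ref{theorem: PRT-WRT}, $\WRT^2_k$ yields the Termination Theorem for $k$-many relations. Thus it suffices to verify the single remaining hypothesis of that theorem: that each relation $R_i$ occurring in the transition invariant is \emph{well-founded}. Once this is in hand, applying the Termination Theorem to $R$ with the witnessing relations $R_0,\dots,R_{k-1}$ (whose union contains $R^+$) immediately gives that $R$ is well-founded.

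The one step that does real work is the passage from ``$R_i$ has a bound'' to ``$R_i$ is well-founded,'' and I would isolate it as an observation provable already in $\RCAo$. Suppose $f_i\colon S\to\N$ is a bound for $R_i$ in the sense of Definition~\ref{def:bounds}, and suppose towards a contradiction that $R_i$ is not well-founded, witnessed by an infinite $R_i$-decreasing sequence $g\colon\N\to S$ with $g(n+1)\,R_i\,g(n)$. For every $l\in\N$ the initial segment $\ap{g(0),\dots,g(l-1)}$ is an $R_i$-decreasing sequence starting from $g(0)$, so the defining property of a bound forces $l\le f_i(g(0))$; taking $l=f_i(g(0))+1$ yields a contradiction. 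The formation of these finite initial segments and the final numerical contradiction use only $\Delta^0_1$-comprehension and basic arithmetic, so no induction beyond that available in $\RCAo$ is needed.

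With this observation, each $R_i$ in the disjunctively well-founded transition invariant is well-founded, and the reduction above completes the proof. I expect the main subtlety to be conceptual rather than technical: it lies in noticing that the bound hypothesis is used \emph{only} through the trivial implication ``bound $\Rightarrow$ well-founded,'' so that the genuine combinatorial content is entirely carried by the chain $\CAC \Rightarrow \WRT^2_k \Rightarrow$ Termination Theorem established earlier. In particular $\WKLo$ plays no role in deriving well-foundedness of $R$; the statement is already provable in $\RCAo+\CAC$. We nonetheless state it within $\WKLo+\CAC$ because this is the base system used for the subsequent bound-extraction argument, where $\WKLo$ (via Theorem~\ref{theorem: weightbounds}) together with the conservation results turns the bounds on the $R_i$ into a primitive recursive bound for $R$.
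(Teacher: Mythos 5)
Your proposal is correct and follows exactly the route the paper intends: the paper states this proposition as an immediate consequence of Proposition~\ref{Proposition: WRT-CAC-ADS}.2 ($\CAC \Rightarrow \WRT^2_k$) and Theorem~\ref{theorem: PRT-WRT} (equivalence of $\WRT^2_k$ with the Termination Theorem for $k$-many relations), with the observation that a bound trivially yields well-foundedness left implicit. Your explicit verification of that last step in $\RCAo$, and your remark that $\WKLo$ is not needed here but is kept for the subsequent bound-extraction, are both accurate.
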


Consider now the special case of $(\dag)$ in the real world: if $R$ is a primitive recursive relation generated by a primitive recursive transition function  (in particular it is deterministic), and each $R_{i}$ has a primitive recursive bounds. Note that any primitive recursive function is strongly represented within $\RCAo$. Then, $(\dag)$ (together with $\WKLo$) means that
\begin{itemize}
 \item[$(\dag\dag)$] for any state $a$, there exists a bound $b\in\N$ of $R$-sequences from $a$.
\end{itemize}
Since $(\dag\dag)$ is a $\Pi^{0}_{2}$-statement provable in $\WKLo+\CAC$, the function $a\mapsto b$ must be bounded by a primitive recursive function.
Thus, we have the following.
\begin{cor}\label{cor:PRB}
Any relation generated by a primitive recursive transition function for which there exists a disjunctively well-founded transition invariant composed of $k$-many relations with primitive recursive bounds has a primitive recursive bound.
\end{cor}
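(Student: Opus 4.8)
The plan is to make precise the two-step argument sketched just before the statement: first derive well-foundedness of $R$ inside a theory whose provably recursive functions are exactly the primitive recursive ones, and then read off the bound as a provably recursive Skolem function. I would work throughout in $\WKLo+\CAC$. Fix in the standard model a primitive recursive transition function $t$ generating $R$, together with primitive recursive bounds $f_{0},\dots,f_{k-1}$ for the relations $R_{0},\dots,R_{k-1}$ of the disjunctively well-founded transition invariant, where $\bigcup_{i<k}R_{i}\supseteq R^{+}$. Since every primitive recursive function is strongly represented in $\RCAo$, all these data are available as provably total functions of $\WKLo+\CAC$, and the hypotheses ``$f_{i}$ is a bound for $R_{i}$'' and ``$\bigcup_{i<k}R_{i}\supseteq R^{+}$'' are arithmetically simple statements true in the standard model, hence provable. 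Applying the proposition establishing $(\dag)$ --- which is provable in $\WKLo+\CAC$ via Theorem~\ref{theorem: PRT-WRT} and Proposition~\ref{Proposition: WRT-CAC-ADS}, each $R_{i}$ being in particular well-founded since it has a bound --- I conclude, inside $\WKLo+\CAC$, that $R$ is well-founded.

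The second step converts this into the bound statement
\[
 (\dag\dag)\colon\quad \forall a\ \exists b\ \bigl(\text{every } R\text{-decreasing sequence starting from } a \text{ has length }\le b\bigr),
\]
and here determinism is essential. Because $t$ is a transition function, the only $R$-decreasing sequence starting from $a$ is $a,\ t(a),\ t^{2}(a),\dots$, continued as long as $t$ is defined; well-foundedness of $R$ says exactly that this single branch is finite, and $\WKLo$ --- indeed already $\RCAo$, via primitive recursion along $t$ --- yields a halting step $b$. The key point is that for deterministic $R$ the matrix of $(\dag\dag)$ is equivalent to the primitive recursive predicate ``the iteration of $t$ from $a$ has halted by step $b$'', so $(\dag\dag)$ takes the form $\forall a\,\exists b\,\theta(a,b)$ with $\theta$ primitive recursive; that is, $(\dag\dag)$ is genuinely $\Pi^{0}_{2}$.

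It remains to carry out the proof-theoretic extraction. Since $(\dag\dag)$ is a $\Pi^{0}_{2}$ theorem of $\WKLo+\CAC$, its Skolem function $a\mapsto \mu b\,\theta(a,b)$ is a provably recursive function of $\WKLo+\CAC$; by the corollary identifying the provably recursive functions of $\WKLo+\CAC$ with the primitive recursive ones, this Skolem function is dominated by a primitive recursive function, and by construction it is a bound for $R$. I expect the main obstacle to be exactly the bookkeeping that keeps $(\dag\dag)$ at complexity $\Pi^{0}_{2}$: for a relation that is merely finitely branching rather than deterministic, the natural formulation of the matrix quantifies universally over all decreasing sequences and so is only $\Pi^{0}_{1}$, which pushes $(\dag\dag)$ up to $\Pi^{0}_{3}$ and severs the direct correspondence with provably recursive Skolem functions. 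Checking that determinism permits replacing this quantification by a single primitive recursive iteration of $t$ --- and hence that the extracted bound is genuinely primitive recursive --- is the crux of the argument.
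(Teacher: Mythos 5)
Your proposal follows the paper's own argument essentially verbatim: prove $(\dag)$ in $\WKLo+\CAC$ via Theorem~\ref{theorem: PRT-WRT} and Proposition~\ref{Proposition: WRT-CAC-ADS}, use determinism of the transition function to render $(\dag\dag)$ as a genuine $\Pi^0_2$ statement, and extract a primitive recursive bound from the fact that the provably recursive functions of $\WKLo+\CAC$ are exactly the primitive recursive ones. Your added remark on why determinism is what keeps the statement at $\Pi^0_2$ is a correct and helpful gloss on the paper's parenthetical ``(in particular it is deterministic)''.
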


Observe that since we worked in $\WKLo$ and thanks to Theorem \ref{theorem: weightbounds} this is another version of the result obtained in \cite{HBOUND2} by using the constructive proof of the Termination Theorem. 

Let us provide a simple example. 
\begin{example} Consider the following transition-based program.
\begin{verbatim}
while (x > 0 AND y > 0)
    if(x > y)
          (x, y) = (y, 2^{x+y})  (1)
    else
          (x, y) = (x, y - 1)    (2)
\end{verbatim}

where $x$, $y$ have domain all integers. A transition invariant for this program is $R_1 \cup R_2$, where 
\begin{align*}
R_1 &:= \bp{(\ap{x,y},\ap{x',y'}) : x> 0 \wedge x' < x}\\
R_2 &:= \bp{(\ap{x,y},\ap{x',y'}) : y> 0 \wedge y' <y }
\end{align*}

In fact if $\ap{x',y'}R^+\ap{x,y}$, by definition of transitive closure there exists a finite number of $R$-steps between them. Moreover notice that $x$ is weakly decreasing. Hence if one of these steps is a (1)-step, then $(\ap{x,y},\ap{x',y'}) \in R_1$, otherwise any step is a (2)-step, hence $y$ decreases everywhere and so $(\ap{x,y},\ap{x',y'}) \in R_2$.

Since each $R_i$ is primitive recursive bounded and thanks to Corollary \ref{cor:PRB}, $R$ has a primitive recursive bound.
\end{example}

Our goal is to characterize the relationship between properties of the transition invariants and how many steps are required by a program to terminate. In the next sections we will study the reverse mathematical strength of the $H$-closure Theorem and the Termination Theorem for relations of height $\omega$ and for relations with bounds and $H$-bounds, strengthen investigations about bounds. 

\section{Bounded versions of Termination Theorem}\label{Section: boundedtermination}

The goal of this section is to study the strength of some bounded versions of the Termination Theorem. They turn out to be equivalent to suitable versions of Paris and Harrington's Theorem. Paris and Harrington's Theorem is a strengthened version of finite Ramsey's Theorem which is unprovable in Peano Arithmetic, since it implies the consistency of Peano Arithmetic \cite{PH}. For all bounded versions proposed in this section the \vgt{bounded Termination Theorem} and the \vgt{bounded $H$-closure Theorem} turn out to be equivalent.

Since Paris Harrington's Theorem for pairs and $k$ many colors is provable within $\RCAo$ for any $k$, throughout this section we work in the subsystem $\RCAs$, defined for the language of second order arithmetic enriched with an exponential operation (e.g. \cite{SOSOA}). $\RCAs$ consists of the basic axioms together with the exponentiation axioms (elementary function arithmetic), $\Delta^0_0$ induction and $\Delta^0_1$-comprehension.

We denote with $\Ack_k$ the usual $k$-class of the Fast Growing Hierarchy \cite{Lob}. Define
\begin{align*}
\begin{cases}
 F_{0}(x)=x+1,\\
 F_{n+1}(x)=F_{n}{}^{(x+1)}(x).
\end{cases}
\end{align*}

Then $\Ack_k$ is the closure under limited recursion and substitution of the set of functions defined by constant, projections, sum and $F_h$ for any $h \leq k$. 

%

\subsection{Termination Theorem for relations of height $\omega$} \label{Section: heighomega}
In here we present the equivalence in $\RCAs$ between the Termination Theorem for relations of height $\omega$, $H$-closure for relations of height $\omega$ and the principle we call Weak Paris Harrington Theorem. This equivalence holds level by level: i.e. we prove that $H$-closure for $k$-many relations of height $\omega$ is equivalent to the Termination Theorem for $k$-many relations of height $\omega$ and to the Weak Paris Harrington Theorem for $k$-many colors.

First of all we state the theorems we deal with. We say that a set $X$ is $1$-\emph{large} if $\min X < |X|$. Given a coloring $P:[X]^{2}\to k$, a set $Y \subseteq X$ is \emph{weakly homogeneous} if its increasing enumeration is a homogeneous sequence for $P$. Therefore if $Y = \bp{y_0 < y_1 < \dots < y_n < \dots }$, there exists $i \in k$ such that $P(\bp{y_n, y_{n+1}})=i$ for all $n$.
 
\begin{defi}\label{def1}
For given $h, k\in\N$, we define the following statements.
\begin{enumerate}
 \item $\PH_{k}^{h,2}$: Given $f:\N \to \N$ such that for all $n\in \N$ $f(n+1)< F_{h}(f(n))$, for all coloring $P:[\ran(f)]^{2}\to k$, there exists a homogeneous set for $P$ which is $1$-large.
 \item $\WPH_{k}^{h,2}$: Given $f:\N \to \N$ such that for all $n\in \N$ $f(n+1)< F_{h}(f(n))$, for all coloring $P:[\ran(f)]^{2}\to k$, there exists a weakly homogeneous set for $P$ which is $1$-large.
 \item $k$-$\HCT^{h}$: If, for all $i<k$, $R_{i}$ is a binary relation of height $\omega$ whose weight function $f_i$ is such that $f_i(n)<F_h(n)$ for any $n$, then any transitive $R_{0}\cup\dots\cup R_{k-1}$-decreasing sequence $f:\N \to \N$ such that for all $n\in \N$ $f(n+1)< F_{h}(f(n))$ is finite. 
 \item $k\text{-}\TT^{h}$: Let $R$ be a deterministic binary relation, whose transition function $f:\N \to \N$ is such that for any $n\in \N$ $f(n+1)< F_{h}(f(n))$. If there exists a disjunctively well-founded transition invariant for $R$ composed of $k$-many relations of height $\omega$ whose weight functions $f_i$ are such that $f_i(n)<F_h(n)$ for any $n$, $R$ is well-founded.
\end{enumerate}
\end{defi}

By using these definitions we can prove the following.
 
\begin{thm}[$\RCAs+ \Tot(F_h)$]\label{thm-1}
For any natural number $k$,
\[
	\WPH^{h,2}_{k} = k\text{-}\HCT^h =  k\text{-}\TT^h.
\]
\end{thm}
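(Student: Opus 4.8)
The plan is to prove the chain of equivalences $\WPH^{h,2}_k = k\text{-}\HCT^h = k\text{-}\TT^h$ by establishing a cycle of implications, mirroring the structure of the unbounded case in Theorem~\ref{theorem: PRT-WRT} but carrying the growth-rate constraints $f(n+1) < F_h(f(n))$ and $f_i(n) < F_h(n)$ through every construction. The key point throughout is that all the coding maps used to pass between colorings and relations (such as $x R_i y \iff x > y \wedge P(\{x,y\}) = i$, and the auxiliary functions $h$ and $\tilde f_i$) are elementary, so they do not spoil membership in the class controlled by $F_h$; the hypothesis $\Tot(F_h)$ guarantees $F_h$ is total, which is exactly what is needed to make the relevant weight functions and sequences well-defined in $\RCAs$.

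First I would prove $\WPH^{h,2}_k \Rightarrow k\text{-}\TT^h$, adapting \vgt{$2 \Rightarrow 3 \Rightarrow 1$} of Theorem~\ref{theorem: PRT-WRT}. Suppose $R$ is deterministic with transition function bounded by $F_h$, and $R_0, \dots, R_{k-1}$ are relations of height $\omega$ with weight functions $f_i(n) < F_h(n)$ whose union contains $R^+$. Assuming $R$ is ill-founded, the $R$-decreasing sequence from a given start is itself bounded by $F_h$ by determinism, and since $R^+$ is transitive and ill-founded it yields an infinite transitive $R_0 \cup \dots \cup R_{k-1}$-decreasing sequence $f$ with $f(n+1) < F_h(f(n))$. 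Coloring $[\rng(f)]^2$ by which $R_i$ realizes each step and applying $\WPH^{h,2}_k$ produces a $1$-large weakly homogeneous set, i.e. a long $R_i$-decreasing sequence for some $i$; I must then check this contradicts $f_i$ being a weight function bounded by $F_h$, using that weak homogeneity gives a genuine $R_i$-decreasing chain whose length exceeds its least element. The implication $k\text{-}\TT^h \Rightarrow k\text{-}\HCT^h$ is the bounded analogue of reducing the Termination Theorem to Weak $H$-closure: given height-$\omega$ relations $R_i$ and a transitive $R_0 \cup \dots \cup R_{k-1}$-decreasing $f$ bounded by $F_h$, I package $R := \{(n+1,n)\}$-style data so that applying $k\text{-}\TT^h$ forces finiteness of $f$, taking care that the synthesized transition function still respects the $F_h$-bound.

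Closing the cycle, I would prove $k\text{-}\HCT^h \Rightarrow \WPH^{h,2}_k$, the bounded counterpart of \vgt{$3 \Rightarrow 2$}. Given $f$ with $f(n+1) < F_h(f(n))$ and $P : [\rng(f)]^2 \to k$, I define relations $R_i$ on $\rng(f)$ by $x R_i y \iff x > y \wedge P(\{x,y\}) = i$ restricted to consecutive-compatible pairs, so that each $R_i$ has height $\omega$ with weight function bounded by $F_h$, while $\rng(f)$ itself is a transitive decreasing $R_0 \cup \dots \cup R_{k-1}$-sequence bounded by $F_h$. Applying $k\text{-}\HCT^h$ tells me this sequence must be finite, so its finiteness is witnessed by a failure of some $R_i$ to remain a weight function; unwinding this produces a $1$-large weakly homogeneous set for $P$. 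The \textbf{main obstacle} I expect is bookkeeping the quantitative bounds precisely: in the unbounded proofs one freely extracts infinite subsequences via minimization (the functions $h$ and $\tilde f_i$), but here every extracted object must retain an explicit $F_h$-growth certificate and every set must be shown to exist by $\Delta^0_1$-comprehension over $\RCAs + \Tot(F_h)$ rather than appealing to unbounded search. Verifying that $1$-largeness ($\min X < |X|$) transfers correctly across these elementary recodings—rather than the mere infinitude used before—is the delicate combinatorial heart of the argument.
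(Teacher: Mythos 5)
Your cycle runs in the opposite direction from the paper's ($\WPH^{h,2}_k \Rightarrow k\text{-}\TT^h \Rightarrow k\text{-}\HCT^h \Rightarrow \WPH^{h,2}_k$, versus the paper's $\WPH^{h,2}_k \Rightarrow k\text{-}\HCT^h \Rightarrow k\text{-}\TT^h \Rightarrow \WPH^{h,2}_k$), which is a legitimate alternative decomposition: your first implication just merges the paper's first two steps, and your second implication (packaging an infinite transitive decreasing sequence with the $F_h$-growth certificate as a deterministic transition relation so that $k\text{-}\TT^h$ applies) is workable, provided you address why the relation $\bp{(g(n+1),g(n)) : n \in \N}$ exists by $\Delta^0_1$-comprehension and is deterministic (injectivity of $g$ does follow, since $g(m)=g(n)$ with $m>n$ would force $g(n)\,R_i\,g(n)$ and hence $f_i(g(n))<f_i(g(n))$).

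The genuine gap is in your third step, $k\text{-}\HCT^h \Rightarrow \WPH^{h,2}_k$. You assert that the relations $x\,R_i\,y \iff x>y \wedge P(\bp{x,y})=i$, ``restricted to consecutive-compatible pairs,'' each have height $\omega$ with an $F_h$-bounded weight function. As stated this cannot be right: without any restriction the claim is simply false (take $P$ constantly $0$; then $R_0$ admits no weight function at all), and with a restriction to consecutive pairs the union no longer contains all pairs of $\rng(f)$, so the increasing enumeration of $\rng(f)$ fails to be a \emph{transitive} decreasing sequence for $R_0\cup\dots\cup R_{k-1}$ and $k\text{-}\HCT^h$ does not apply. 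The missing idea is the paper's contrapositive together with an explicit construction: one must \emph{assume} that $P$ has no $1$-large weakly homogeneous set and then define candidate weight functions by bounded recursion, $f_i(x_n)=\min\bigl(\bp{f_i(x_m)-1 : m<n,\ P(\bp{x_m,x_n})=i}\cup\bp{x_n}\bigr)$ with $f_i(x_n)\le x_n$; the point is that $f_i(x)\ge 0$ holds precisely because a negative value would unwind into an $R_i$-decreasing chain of length exceeding its least element, i.e.\ a $1$-large weakly homogeneous set. Your phrase ``its finiteness is witnessed by a failure of some $R_i$ to remain a weight function; unwinding this produces a $1$-large weakly homogeneous set'' points in the right direction, but the mere nonexistence of a weight function is not something one can ``unwind''; only the failure of this particular bounded-recursive candidate yields the concrete $1$-large set, and supplying that construction is the heart of the direction you have left open.
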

\begin{proof}
\vgt{$\WPH_{k}^{h,2}\Rightarrow k\text{-}\HCT^h$}: Let binary relations $\bp{ R_{i}\subseteq S^{2}: i<k }$ and weight functions $\bp{f_{i} : S\to \N \ : i<k}$ be given as in the hypotheses of $k\text{-}\HCT^h$.
For the sake of contradiction, assume $R = \bigcup\{R_{i} : i < k\}$ and let $\langle a_{j}: j\in\N \rangle$ be an infinite transitive sequence for $R$ such that $a_{i+1}< F_h(a_i)$.
Define $f: \N \to \N$ as
\[
	f(n)=\max\bp{F_{h}(a_{j})+(n-j+1): j \leq n}.
\]
Observe that for any $n$, $f(n) \geq F_{h}(a_n)+1$, hence:
\begin{align*}
f(n+1) &= \max\bp{f(n)+1, F_{h}(a_{n+1})+1} < \max\bp{f(n)+1, F_{h}(F_h(a_n))+1}\\
&\leq \max\bp{f(n)+1, F_{h}(F_h(a_n)+1)} \leq F_h(f(n)).
\end{align*}
Since $f$ is increasing, we can define within $\RCAo$ $X=\{f(j) : j\in\N\}$. Define $P:[X]^{2}\to k$ as $P(\bp{f(n),f(m)})=\min \{i<k: a_{m}R_{i}a_{n}\}$.
By $\WPH^{h,2}_{k}$, take a weakly homogeneous set $H$ for color $i<k$ with $|H|>\min H$.
Write $H=\{f({h_{0}}), f({h_{1}}),\dots , f({h_{l-1}})\}$. Then, $l>f({h_{0}})$.
By definition, we have $a_{h_{n+1}}R_{i}a_{h_{n}}$ for any $n<n+1<l$.
Thus, we have
\[f_{i}(a_{h_{l-1}})<\dots<f_{i}(a_{h_{1}})<f_{i}(a_{h_{0}})< F_h(a_{h_{0}}) < f({h_{0}})<l,\]
which is a contradiction.

\vgt{$k\text{-}\HCT^h \Rightarrow k\text{-}\TT^h$}: Assume there exists a disjunctively well-founded transition invariant 
\[
	T= R_0 \cup \dots \cup R_{k-1} \supseteq R^+
\]
where each relation $R_i$ is well-founded and has height $\omega$ with weight function as in the hypothesis. By applying $k\text{-}\HCT^h$ their union is such that each transitive decreasing sequence $f'$ such that for any $n$ $f'(n+1)<F_h(f'(n))$ is finite. So it holds also for $R^+$, since it is preserved between subsets. Therefore, since $R$ is the graph of $f$ as in the hypothesis of $k\text{-}\TT^h$, there are no infinite $R$-decreasing sequences. Hence $R$ is well-founded.

\vgt{$k\text{-}\TT^h\Rightarrow\WPH^{h,2}_{k}$}: Assume by contradiction that there exist $X$ and $P: [X]^2 \to k$ such that, there is no weakly homogeneous $1$-large set. 
For any $i \in k$, define $R_i$ as follows:
\[
	x R_i y \iff y < x  \wedge P(\bp{y,x}) = i.
\] 
We claim that $R_i$ has height $\omega$ for any $i\in k$. In fact, if $X=\{x_i : i \in \N\}$, we can define a weight function $f_i: X \to \N$, by bounded recursion:
\begin{align*}
f_i(x_n)&= 
\begin{cases}
x_0 &\mbox{if } n=0;\\
\min \left\{ \{f_i(x_m)-1 : m < n\ \wedge \ P(\bp{x_m,x_n})=i\} \cup \{x_n\}\right\} &\mbox{otherwise};
\end{cases}\\
f_i(x_n)&\leq x_n.
\end{align*}
$f_i$ is a weight function, since if $xR_i y$ then $P(\bp{y,x}) = i$ and $y < x$ and so $f_i(x) < f_i(y)$.
Moreover for any $x \in X$ we have $f_i(x) \geq 0$. Otherwise there should exist $y_0> \dots> y_l$ such that 
\[
	y_0 R_i y_1 \dots R_i y_l = x,
\]
where $l > y_0$, due to the definition of $f_i$ and since $X\subseteq \N$. This is a contradiction since we assumed that there is no weakly homogeneous sets for $P$. Then each $R_i$ has height $\omega$. 

Therefore, by applying $k\text{-}\TT^h$, $R = \bigcup\{ R_i : i \in k\}$ should be well-founded, but this is a contradiction since $R=[X]^2$. 
\end{proof}

We can consider also the relativized versions of the statements in Definition \ref{def1}. Formally:

\begin{definition}
For given $k\in\N$ and for given $X \subseteq \N$, we define the following statements.
\begin{enumerate}
 \item $\PHs^{2}_{k}$: for any infinite set $Y$ and any coloring function $P:[Y]^{2}\to k$, there exists a homogeneous set for $P$ which is $1$-large.
 \item $\WPHs^{2}_{k}$: for any infinite set $Y$ and any coloring function $P:[Y]^{2}\to k$, there exists a weakly homogeneous set for $P$ which is $1$-large.
 \item $k$-$\HCTo$: if $R_{i}$ is a binary relation on $S$ of height $\omega$ for any $i<k$, then $R_{0}\cup\dots\cup R_{k-1}$ is $H$-well-founded.
 \item $k\text{-}\TTo$: any relation $R$ for which there exists a disjunctively well-founded transition invariant composed of $k$-many relations in of height $\omega$ is well-founded.
\end{enumerate}
\end{definition}

It is easy to prove in $\RCAs$ that, by using the previous definitions, we have:
\[
	\WPHs^{2}_{k} = k\text{-}\HCTo = k\text{-}\TTo .
\]
For any fixed $k$, each of these statements proves that for any infinite set there exists a $1$-large subset, which is just another form of $\Ii^0$ (e.g. see \cite{SimpsonKeita}). Thus they are all equivalent to $\RCAo$ over $\RCAs$.  Nonetheless the full versions are not provable over $\RCAo$ and our results imply that within $\RCAs$
\[
	\forall k\ \WPHs^{2}_{k} = \forall k\ k\text{-}\HCTo = \forall k\ k\text{-}\TTo.
\]

\subsection{Termination Theorem for bounded relations} \label{Section: bounds}

Here we consider the formulations obtained by using bounds instead of weight functions. For the application to real programs, finding a bound seems to be as difficult as finding a weight function. So, this relaxed notion would be nonsense. On the other hand, $H$-bound could be found more easily, e.g., if $R$ has no loop and $a\in S$ has only $m$-many predecessors, then putting $f(a)=m$ is good enough to obtain a $H$-bound (Definition \ref{def:bounds}).

Then we can define the version of $H$-closure and of the Termination Theorem with bounds and with $H$-bounds. 

\begin{defi}
For given $h,k\in\N$, we define the following statements.
\begin{enumerate}
 \item $k$-$\HCT^{h}_b$:  If, for any $i<k$, $R_{i}$ is a binary relation with bound $f_i$ such that $f_i(n)<F_h(n)$ for any $n$, then any transitive $R_{0}\cup\dots\cup R_{k-1}$-decreasing sequence $f:\N \to \N$ such that for any $n\in \N$ $f(n+1)< F_{h}(f(n))$ is finite.
 \item $k$-$\HCT^{h}_H$: If, for any $i<k$, $R_{i}$ is a binary relation with $H$-bound $f_i$ such that $f_i(n)<F_h(n)$ for any $n$, then any transitive $R_{0}\cup\dots\cup R_{k-1}$-decreasing sequence $f:\N \to \N$ such that for any $n\in \N$ $f(n+1)< F_{h}(f(n))$ is finite.
 \item $k$-$\TT^h_b$: Let $R$ be a deterministic binary relation, whose transition function $f:\N \to \N$ is such that for any $n\in \N$ $f(n+1)< F_{h}(f(n))$. If there exists a disjunctively well-founded transition invariant for $R$ composed of $k$-many relations with bounds $f_i$ such that $f_i(n)<F_h(n)$ for any $n$, $R$ is well-founded.
 \item $k$-$\TT^h_H$: Let $R$ be a deterministic binary relation, whose transition function $f:\N \to \N$ is such that for any $n\in \N$ $f(n+1)< F_{h}(f(n))$. If there exists a disjunctively well-founded transition invariant for $R$ composed of $k$-many relations with $H$-bounds $f_i$ such that $f_i(n)<F_h(n)$ for any $n$, $R$ is well-founded.
\end{enumerate}
\end{defi}

Then, we have the following.
\begin{thm}[$\RCAs + \Tot(F_h)$]\label{theorem:WPHbimpliesTTb} For any natural number $k$,
 \[\WPH_{k}^{h,2}= k\text{-}\HCT^h_b= k\text{-}\TT^h_b.\]
\end{thm}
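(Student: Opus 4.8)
The plan is to mirror the structure of the proof of Theorem~\ref{thm-1} (the height-$\omega$ case), establishing the cycle of implications $\WPH_{k}^{h,2}\Rightarrow k\text{-}\HCT^h_b \Rightarrow k\text{-}\TT^h_b \Rightarrow \WPH_{k}^{h,2}$. The crucial observation making this adaptation possible is that a genuine weight function (as used in $k\text{-}\HCT^h$) is a stronger object than a bound: a weight function strictly decreases along every $R_i$-step, whereas a bound only controls the \emph{length} of decreasing sequences. So the implications involving $k\text{-}\HCT^h_b$ and $k\text{-}\TT^h_b$ cannot simply quote the height-$\omega$ argument verbatim; the places where the old proof exploited the pointwise inequality $f_i(x)<f_i(y)$ from $xR_iy$ must be replaced by an argument using only the length bound.

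First I would prove $\WPH_{k}^{h,2}\Rightarrow k\text{-}\HCT^h_b$. The setup is identical to the height-$\omega$ case: given relations $R_i$ with bounds $f_i$ satisfying $f_i(n)<F_h(n)$ and an infinite transitive $R$-sequence $\langle a_j\rangle$ with $a_{j+1}<F_h(a_j)$, I form the same rapidly-growing increasing function, take $X$ to be its range, colour $[X]^2$ by the least $i$ witnessing the $R_i$-relation, and apply $\WPH_{k}^{h,2}$ to extract a $1$-large weakly homogeneous set $H=\{f(h_0),\dots,f(h_{l-1})\}$ in some colour $i$, with $l>f(h_0)$. The new point is the derivation of the contradiction: from weak homogeneity we get an $R_i$-decreasing transitive sequence $a_{h_{l-1}}R_i\cdots R_i a_{h_0}$ of length $l$ starting at $a_{h_0}$, whence the bound gives $l\le f_i(a_{h_0})<F_h(a_{h_0})<f(h_0)<l$, a contradiction. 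Thus the argument goes through with a bound in place of a weight function precisely because $\WPH$ supplies a chain whose \emph{length} exceeds the permitted bound. The implication $k\text{-}\HCT^h_b \Rightarrow k\text{-}\TT^h_b$ is formally the same as $k\text{-}\HCT^h\Rightarrow k\text{-}\TT^h$: the transition invariant $T=R_0\cup\dots\cup R_{k-1}\supseteq R^+$ has bounded pieces, so $k\text{-}\HCT^h_b$ forbids infinite bounded transitive decreasing sequences, in particular for $R^+$, and hence $R$ (being the graph of the given transition function $f$) is well-founded.

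The last and hardest step is $k\text{-}\TT^h_b\Rightarrow\WPH_{k}^{h,2}$, and here the height-$\omega$ proof must be genuinely modified. Assuming no $1$-large weakly homogeneous set exists, I define $R_i$ on $X$ by $xR_iy\iff y<x\wedge P(\{y,x\})=i$ exactly as before, but now I must equip each $R_i$ with a \emph{bound} rather than a weight function. The key is that the absence of $1$-large weakly homogeneous sets bounds the \emph{length} of $R_i$-decreasing transitive sequences directly: any $R_i$-decreasing sequence $y_0>\dots>y_{l-1}$ starting at $y_0$ corresponds, by the monochromaticity in colour $i$, to a weakly homogeneous subset of $X$, so its length $l$ cannot exceed $y_0$ (otherwise it would be $1$-large). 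Hence the function $f_i(x)=x$ (or a suitable $\Delta^0_1$ modification bounded by $F_h$) is itself a bound for $R_i$ satisfying $f_i(n)<F_h(n)$, which is exactly what $k\text{-}\TT^h_b$ requires. I then apply $k\text{-}\TT^h_b$ to $R=\bigcup_{i<k}R_i=[X]^2$ to conclude $R$ is well-founded, contradicting that $[X]^2$ is plainly ill-founded on the infinite set $X$. The main obstacle I anticipate is the bookkeeping in this last step: I must verify that ``bound'' in the sense of Definition~\ref{def:bounds} is correctly witnessed by the non-existence of $1$-large weakly homogeneous sets (the bound controls sequences starting from a given point $x$, and the $1$-largeness threshold $\min X<|X|$ must be matched against the correct starting value), and confirm that the resulting $f_i$ can be taken $\Delta^0_1$ and dominated by $F_h$ so that it is a legitimate instance for $k\text{-}\TT^h_b$ within $\RCAs+\Tot(F_h)$.
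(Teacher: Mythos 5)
Your proposal is correct, and for the first two implications it coincides with the paper's proof: the paper likewise observes that the $\WPH_{k}^{h,2}\Rightarrow k\text{-}\HCT^h$ argument of Theorem~\ref{thm-1} already produces an $R_i$-decreasing sequence from $a_{h_0}$ of length exceeding $f_i(a_{h_0})$, which contradicts a bound just as well as a weight function, and it reuses the $\HCT\Rightarrow\TT$ step verbatim. Where you diverge is the last implication: the paper disposes of $k\text{-}\TT^h_b\Rightarrow\WPH_k^{h,2}$ in one line by noting that $k\text{-}\TT^h_b$ implies $k\text{-}\TT^h$ (every weight function yields a bound, so any instance of the weight-function version is an instance of the bounded version) and then quoting Theorem~\ref{thm-1}; you instead rerun the construction directly, equipping each $R_i$ with the identity as a bound, justified by the absence of $1$-large weakly homogeneous sets. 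Both work. Your route is more self-contained and arguably cleaner, since it bypasses the somewhat delicate bounded-recursion construction of the weight functions $f_i$ in Theorem~\ref{thm-1} (and the off-by-one lurking in ``a weight function is a bound''); the paper's route buys brevity. One small slip to fix in your write-up: with $xR_iy\iff y<x\wedge P(\bp{y,x})=i$, an $R_i$-decreasing sequence $\langle y_0,\dots,y_{l-1}\rangle$ has \emph{increasing} values $y_0<y_1<\dots<y_{l-1}$, so the starting point $y_0$ is the minimum of the corresponding weakly homogeneous set; that is exactly why non-$1$-largeness gives $l\le y_0=\id(y_0)$, and $\id(n)=n<F_h(n)$ makes this a legitimate bound for $k\text{-}\TT^h_b$. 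Also, that sequence is only consecutively $R_i$-linked, not $R_i$-transitive, but since a bound (unlike an $H$-bound) constrains arbitrary decreasing sequences, this does not matter.
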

\begin{proof}
\vgt{$\WPH_k^{h,2}\Rightarrow k\text{-}\HCT^h_b$}: The argument of Theorem \ref{thm-1} provides a decreasing $R_i$-sequence from $a_{h_0}$ of length greater than $f_i(a_{h_0})$. Hence the thesis.

\vgt{$\HCT^h_b\Rightarrow k\text{-}\TT^h_b$}: This proof is the same of the one in Theorem \ref{thm-1}.

\vgt{$k\text{-}\TT^h_b \Rightarrow \WPH_k^{h,2}$}: Thanks to Theorem \ref{thm-1} we have that $k\text{-}\TT^h \Rightarrow \WPH_k^{h,2}$. Moreover $k\text{-}\TT^h_b$ implies $k\text{-}\TT^h$ since if the relation $R$ has a weight function then it has also a bound. Therefore we are done. 
\end{proof}

This implies that $k\text{-}\HCT^h_b = k\text{-}\HCT^h$ and $k\text{-}\TT^h_b = k\text{-}\TT^h$. In the case of $H$-bounds, the bounded versions are stronger. In fact, they are equivalent to the Paris Harrington Theorem for $k$-many colors.

\begin{thm}[$\RCAs + \Tot(F_h)$]\label{theorem: TTPH} For any natural number $k$,
 \[\PH_k^{h,2}= k\text{-}\HCT^h_H= k\text{-}\TT^h_H.\]
\end{thm}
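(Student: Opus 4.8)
The plan is to establish the equivalences by the cyclic chain
\[
	\PH^{h,2}_{k} \Rightarrow k\text{-}\HCT^h_H \Rightarrow k\text{-}\TT^h_H \Rightarrow \PH^{h,2}_{k},
\]
following the architecture of the proofs of Theorem \ref{thm-1} and Theorem \ref{theorem:WPHbimpliesTTb}. The organising observation is that a \emph{transitive} $R_i$-decreasing sequence corresponds exactly to a \emph{fully} homogeneous set for the associated coloring, whereas a merely $R_i$-decreasing sequence corresponds only to a weakly homogeneous one. Accordingly, wherever the proof of Theorem \ref{thm-1} used $\WPH^{h,2}_k$, weak homogeneity, and weight functions, I would use $\PH^{h,2}_k$, full homogeneity, and $H$-bounds.

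For $\PH^{h,2}_{k} \Rightarrow k\text{-}\HCT^h_H$ I would reuse the setup of the corresponding implication of Theorem \ref{thm-1}: given relations $R_i$ with $H$-bounds $f_i$ satisfying $f_i(n) < F_h(n)$ and an infinite transitive $R_0 \cup \dots \cup R_{k-1}$-sequence $\ap{a_j : j \in \N}$ with $a_{j+1} < F_h(a_j)$, set $f(n) = \max\bp{F_h(a_j) + (n - j + 1) : j \le n}$, put $X = \ran(f)$, and color $P(\bp{f(n), f(m)}) = \min\bp{i < k : a_m R_i a_n}$. The new point is that $\PH^{h,2}_k$ now yields a genuinely homogeneous $1$-large set $H = \bp{f(h_0), \dots, f(h_{l-1})}$ in some color $i$, so that $a_{h_q} R_i a_{h_p}$ holds for \emph{all} $p < q$; hence $\ap{a_{h_0}, \dots, a_{h_{l-1}}}$ is a transitive $R_i$-decreasing sequence of length $l$. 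Then $l > \min H = f(h_0) \ge F_h(a_{h_0}) + 1$, while the $H$-bound gives $l \le f_i(a_{h_0}) < F_h(a_{h_0})$, a contradiction.

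The implication $k\text{-}\HCT^h_H \Rightarrow k\text{-}\TT^h_H$ is identical to $k\text{-}\HCT^h \Rightarrow k\text{-}\TT^h$ in Theorem \ref{thm-1}, as that argument only uses finiteness of transitive decreasing sequences of the union with the $F_h$-growth condition, a property inherited by the subset $R^+$. For $k\text{-}\TT^h_H \Rightarrow \PH^{h,2}_k$ I would assume toward a contradiction a strictly increasing $f$ with $f(n+1) < F_h(f(n))$ and a coloring $P : [\ran(f)]^2 \to k$ with no $1$-large homogeneous set, put $X = \ran(f)$, and define $R_i$ by $x R_i y \iff y < x \wedge P(\bp{y,x}) = i$. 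The decisive step is to check that each $R_i$ carries an $H$-bound of the prescribed growth: a transitive $R_i$-decreasing sequence $\ap{a_0, \dots, a_{l-1}}$ is precisely an increasing, color-$i$-homogeneous subset of $X$ with minimum $a_0$, so the absence of $1$-large homogeneous sets forces $l \le a_0$; hence $f_i(n) = n$ is an $H$-bound for $R_i$ and satisfies $f_i(n) = n < F_h(n)$. Letting $R$ be the successor relation on $X$ --- deterministic, with $f$ as its transition function --- one has $R^+ = \bigcup_{i<k} R_i$, so $k\text{-}\TT^h_H$ would declare $R$ well-founded, contradicting that $f(0), f(1), f(2), \dots$ is an infinite $R$-decreasing sequence.

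I expect the principal obstacle to be the direction $k\text{-}\TT^h_H \Rightarrow \PH^{h,2}_k$, and within it the verification that the relations $R_i$ admit $H$-bounds rather than merely weight functions. This is exactly where full homogeneity is indispensable: weak homogeneity controls arbitrary $R_i$-decreasing sequences (yielding only height $\omega$, as in Theorem \ref{thm-1}), but here one must bound the \emph{transitive} ones, and it is the identification of transitive $R_i$-sequences with fully homogeneous subsets of $X$ that converts ``no $1$-large homogeneous set'' into the clean bound $f_i(n) = n$. The remaining care is bookkeeping internal to $\RCAs + \Tot(F_h)$ --- confirming $n < F_h(n)$ for all $n$ so that $f_i$ meets the growth requirement, and reducing to a strictly increasing fast-growing enumeration of $X$ --- which proceeds exactly as in the already-established Theorem \ref{thm-1}.
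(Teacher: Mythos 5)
Your proposal is correct and follows essentially the same route as the paper: the same cyclic chain, the same coloring $P(\bp{f(n),f(m)})=\min\bp{i<k : a_m R_i a_n}$ in the first implication (you define $f$ via $F_h(a_j)$ where the paper uses the $H$-bounds $f_i$, but since $f_i(n)<F_h(n)$ both yield the same contradiction), and the same key observation in the last implication that transitive $R_i$-decreasing sequences are exactly increasing color-$i$ homogeneous subsets, so the identity is an $H$-bound. Your explicit choice of the successor relation on $X$ as the deterministic $R$ with $R^+=\bigcup_{i<k}R_i$ is a small clarification of a point the paper leaves implicit, not a different argument.
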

\begin{proof}
\vgt{$\PH^{h,2}_{k}\Rightarrow k\text{-}\HCT^h_H$}. As we did in the proof of Theorem \ref{thm-1}, let binary relations $\bp{ R_{i}\subseteq S^{2}: i<k }$ and $H$-bounds $\bp{ f_{i}: S\to \N\ : i<k }$ be given as in the hypothesis of $k\text{-}\HCT^h_H$.
For the sake of contradiction, assume $R = \bigcup\{R_{i} : i < k\}$ and let $\langle a_{j}: j\in\N \rangle$ be an infinite transitive sequence for $R$ such that $a_{i+1}< F_h(a_i)$.
Define $f:\N \to \N$ as
\[
	f(n)=\max\bp{f_{i}(a_{n+1})+(n-j+1): i<k, j \leq n}.
\]
Let $X=\{f(j): j\in\N\}$, and define $P:[X]^{2}\to k$ as $P(\bp{f(n),f(m)})=\min \{i<k : a_{m}R_{i}a_{n}\}$.
By $\PH_{k}^{h,2}$, take a homogeneous set $H$ for color $i<k$ with $|H|>\min H$.
Write $H=\{x_{h_{0}}<x_{h_{1}}<\dots <x_{h_{l-1}}\}$. Then, $l>x_{h_{0}}$.
By definition, we have $a_{h_{n}}R_{i}a_{h_{m}}$ for any $m<n<l$.
Thus there exists a transitive decreasing $R_i$-sequence from $a_{h_0}$ of length $l$, but $f_i(a_{h_0}) < x_{h_0} < l$ which is a contradiction.

\vgt{$\HCT^h_H \Rightarrow k\text{-}\TT^h_H$}. As in Theorem \ref{thm-1}.
	
\vgt{$k\text{-}\TT^h_H \Rightarrow \PH_{k}^{h,2}$.} Assume that $P:[X]^2 \to k$ has no homogeneous $1$-large set. Then we define, as we did in Theorem \ref{thm-1},
\[
	xR_i y \iff y < x  \wedge P(\bp{y,x}) = i.
\]
Then let $f$ be the identity function on $X$. We claim it is a $H$-bound for any $R_i$. In fact, let $\langle x_j : j \in l\rangle$ be a decreasing transitive $R_i$-sequence: by definition unfolding we have $x_0 < x_1 < \dots < x_{l-1}$ and $x_b R_i x_a$ for any $0\leq a < b <l$. If $x_0=f(x_0)<l$ then we obtain a homogeneous $1$-large set and this is a contradiction. Then $x_0 = f(x_0) \geq l$: this means that the identity function is a $H$-bound. So thanks to $k\text{-}\TT^h_H$, $[X]^2 = \bigcup\{R_i : i \in k \}$ should be well-founded, and this is a contradiction.
\end{proof}

Also in this case, we can consider also the relativized versions of the statements above.

\begin{definition}
For given $k\in\N$ and for given $X \subseteq \N$, we define the following statements.
\begin{enumerate}
 \item $k$-$\HCToh$: if $R_{i}$ is a binary relation with bound for any $i<k$, then $R_{0}\cup\dots\cup R_{k-1}$ is $H$-well-founded.
 \item $k\text{-}\TToh$: any relation $R$ for which there exists a disjunctively well-founded transition invariant composed of $k$-many relations with bounds is well-founded.
  \item $k$-$\HCToH$: if $R_{i}$ is a binary relation with $H$-bound for any $i<k$, then $R_{0}\cup\dots\cup R_{k-1}$ is $H$-well-founded.
  \item $k\text{-}\TToH$: any relation $R$ for which there exists a disjunctively well-founded transition invariant composed of $k$-many relations with $H$-bounds is well-founded.
\end{enumerate}
\end{definition}

Hence within $\RCAs$:
\[
	\forall k\ \WPHs^{2}_{k} = \forall k \ k\text{-}\HCToh = \forall k \ k\text{-}\TToh.
\]
\[
	\forall k \ \PHs^{2}_{k} = \forall k \ k\text{-}\HCToH = \forall k \ k\text{-}\TToH.
\]

\section{Bounding via Fast Growing Hierarchy} \label{Section: boundedbounds}

Is there a correspondence between the complexity of a primitive recursive transition relation and the number of relations which compose the transition invariant? Here we prove that actually there is, by applying the results obtained in Section \ref{Section: boundedtermination}. 
As a corollary we get the following proposition.

\begin{proposition}
For a transition relation  $R\subseteq \N^{2}$, the following are equivalent.
\begin{itemize}
 \item[(1)] $R$ is primitive recursively bounded.
 \item[(2)] $R$ is $k$-disjunctively linearly H-bounded for some $k\in\omega$.
\end{itemize}
Moreover, if $R$ is a deterministic transition relation, the following is also equivalent to the above.
\begin{itemize}
 \item[(3)] $R$ is $k$-disjunctively linearly bounded for some $k\in\omega$.
\end{itemize}
\end{proposition}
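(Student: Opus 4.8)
The plan is to read the three conditions through the machinery of Section~\ref{Section: boundedtermination} together with the correspondence between the Paris--Harrington principle and the Fast Growing Hierarchy. First I would fix the dictionary: to say that $R$ is $k$-disjunctively linearly $H$-bounded is exactly the hypothesis of $k\text{-}\HCT^1_H$ / $k\text{-}\TT^1_H$ (a transition invariant $R_0\cup\dots\cup R_{k-1}\supseteq R^+$ in which every $R_i$ carries an $H$-bound dominated by the linear function $F_1$), while $k$-disjunctively linearly bounded is the corresponding hypothesis of $k\text{-}\TT^1_b$ with genuine bounds. By Theorem~\ref{theorem: TTPH} the former is governed by $\PH^{1,2}_k$ and by Theorem~\ref{theorem:WPHbimpliesTTb} the latter by $\WPH^{1,2}_k$. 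The two external inputs I would isolate are: (i) $\bigcup_{m}\Ack_m$ is exactly the class of primitive recursive functions; and (ii) the Ketonen--Solovay analysis of the largeness function attached to $\PH^{1,2}_k$ and $\WPH^{1,2}_k$, namely that for each fixed $k$ this function lies at a fixed finite level $\Ack_{g(k)}$ of the hierarchy, and that these levels are cofinal in the primitive recursive functions as $k\to\infty$.

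For $(2)\Rightarrow(1)$ I would run the upper-bound side. Any $R$-decreasing sequence $a_0,a_1,\dots$ is, via $R^+\subseteq\bigcup_{i<k}R_i$, a transitive $\bigcup_{i<k}R_i$-decreasing sequence, so tracking the proof of the direction $\PH^{1,2}_k\Rightarrow k\text{-}\HCT^1_H$ in Theorem~\ref{theorem: TTPH} turns a sequence of length $l$ into a $k$-colouring of $l$ points whose homogeneous sets are blocked by the linear $H$-bounds. Hence $l$ is at most the Paris--Harrington number for pairs and $k$ colours evaluated at a linear function of $a_0$; by input (ii) this number is primitive recursive in $a_0$, giving a primitive recursive bound for $R$.

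The difficult direction is $(1)\Rightarrow(2)$, where I must manufacture the decomposition from a primitive recursive bound $b$. Writing $b\in\Ack_m$, so that $b$ is dominated by $F_m$, the idea is to realise the $R$-descent from each state inside the canonical colouring that makes $\PH^{1,2}_{k(m)}$ hard: by the lower-bound side of input (ii), a function of growth rate $F_m$ is captured by a Paris--Harrington colouring in $k(m)$ colours of dimension $2$ with linear spread. Concretely, the recursion $F_m(x)=F_{m-1}^{(x+1)}(x)$ lets one peel off, per drop of one level in the hierarchy, a relation along which any transitive descent has only linearly many steps (namely the at most $x+1$ iterations of $F_{m-1}$), so that iterating the unfolding $k=k(m)$ times produces relations $R_0,\dots,R_{k-1}$ with linear $H$-bounds and $R^+\subseteq\bigcup_{i<k}R_i$. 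Making this unfolding uniform in the states of $R$ and verifying that each $R_i$ genuinely has a linear $H$-bound is the main obstacle, and is precisely where the Ketonen--Solovay realisation of fast-growing functions by Paris--Harrington colourings is needed.

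Finally, for the deterministic case I would close one side cheaply and work on the other. Since every bound is in particular an $H$-bound, $(3)$ trivially implies $(2)$, hence $(1)$, and this needs no determinism. For $(1)\Rightarrow(3)$ I would rerun the construction of the previous paragraph but read it through Theorem~\ref{theorem:WPHbimpliesTTb} (the $\WPH^{1,2}_k$/bound version) in place of Theorem~\ref{theorem: TTPH}: determinism of $R$ forces the $R$-decreasing sequences, being the computations, to be unique from each state, so that the layers $R_i$ cut out of a single deterministic descent admit genuine bounds and not merely $H$-bounds, and the weakly-homogeneous formulation of $\WPH^{1,2}_k$ suffices. The three conditions then coincide.
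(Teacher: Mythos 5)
Your reading of the easy direction is the paper's: once one observes that ``linearly ($H$-)bounded'' means ($H$-)bounded by $F_{0}$ (note the small indexing slip in your dictionary --- the relevant principles are $k\text{-}\TT^{0}_{H}$ and $k\text{-}\TT^{0}_{b}$, not the $h=1$ versions), the implications $(2)\Rightarrow(1)$ and $(3)\Rightarrow(1)$ follow from Theorem~\ref{theorem: TTPH} (resp.\ Theorem~\ref{theorem:WPHbimpliesTTb}) together with the Ketonen--Solovay chain $\Tot(F_{k+h})\geq k\text{-}\LAR^{h}$ and $(k+5)\text{-}\LAR^{h}\Rightarrow\PH^{h,2}_{k}$, exactly as in Corollary~\ref{cor: k+5}; and $(3)\Rightarrow(2)$ is indeed trivial since a bound is an $H$-bound.

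The genuine gap is in $(1)\Rightarrow(2)$ and $(1)\Rightarrow(3)$, which you flag as ``the main obstacle'' but do not resolve, and for which the tool you propose is not the right one. The Ketonen--Solovay realisation of $F_{m}$ by Paris--Harrington colourings produces lower bounds for the largeness functions attached to $\PH^{2}_{k}$; it does not, by itself, decompose the transitive closure of an \emph{arbitrary} $F_{m}$-bounded relation $R$ into linearly $H$-bounded pieces, and nothing in your sketch explains how the ``peeling off one hierarchy level per relation'' is to be performed on the states of $R$. What is needed (Theorems~\ref{thm-sharp1} and~\ref{thm-sharp2}) is a direct construction: split $R^{+}$ into its decreasing part $T_{>}$ (automatically $F_{0}$-bounded) and its increasing part $T_{<}$; introduce the chain-length function $d_{T_{<}}(x,y)=\max\bp{m:\exists\, x=x_{0}T_{<}\cdots T_{<}x_{m}=y}$, which is finite and computable from the $F_{k}$-bound; and set $x\,T_{i}\,y$ iff $x\,T_{<}\,y$ and $i=\min\bp{j\le k: d_{T_{<}}(x,y)\le F_{j}(x)}$. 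The linear $H$-bound for $T_{i}$ then comes from the estimate $d_{T_{<}}(x_{0},x_{m})>F_{i-1}^{(m)}(x_{0})>F_{i}(x_{0})$ for any $T_{i}$-homogeneous sequence of length $m>x_{0}+1$, contradicting $x_{0}T_{i}x_{m}$; this yields $k+2$ relations. For $(1)\Rightarrow(3)$ one cannot use $d_{T_{<}}$ directly: the paper replaces it by rank functions $\rank_{i}$ defined by bounded recursion along the (finitely many, effectively listable) states reachable from $x$, and determinism is used to show that $T_{<}$-successors of a fixed state are linearly ordered by $T_{<}$ according to their ranks, which is what upgrades the $H$-bound argument to a genuine bound. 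None of this appears in your proposal, so as written the hard half of the equivalence is asserted rather than proved.
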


\subsection{From transition invariants to bounds} \label{subsection: FirstComparison}
The main result we are going to prove here is the equivalence between  $\forall k \ k\text{-}\TTo$ and the relativized version of the totality of the fast growing hierarchy.

Given an increasing function $f: \N \to \N$ and a natural number $k$, we define $F_{k,f}$ as follows: 
\begin{align*}
\begin{cases}
 F_{0,f} (x) = f(x)+1,\\
 F_{n+1,f}(x)=F_{n,f}^{(x+1)}(x).
\end{cases}
\end{align*}

Then $\Ack_k^f$ is the closure under limited recursion and substitution of the set of functions defined by constant, projections, sum and $F_{h,f}$ for any $h \leq k$. For any natural number $k$, let $\Tot^*(\Ack_k)$ be the relativized version of $\Tot(\Ack_k)$, namely: for any function $f$, any function in $\Ack_k^f$ is total.

Summing up the results presented in this subsection we have the following:
\[ 
\forall k \ \WPHs^2_k = \forall k \ k\text{-}\TTo=\forall k \ k\text{-}\HCTo =\forall k \ \Tot^*(\Ack_{k}) = \forall k \ \PHs^2_k. 
\]

Recall that, although for any natural number $k$ $\PHs^2_k$ and $\Tot^*(\Ack_k)$ hold within $\RCAo$, $\forall k \ \PHs^2_k$ and  $\forall k \ \Tot^*(\Ack_k)$ do not

\subsubsection{From termination to totality}
In order to prove that $\forall k \ k\text{-}\TTo \implies \forall k\ \Tot^*(\Ack_{k})$, we recall the following intermediate statement \cite{SolovayKetonen}.
We say that a finite set $X$ is $0$-\emph{large} if it is not empty, $X$ is $k+1$-\emph{large} if
\[
	X \setminus \bp{\min X} = \bigsqcup\bp{X_i : i \in n},
\]
where $n\geq \min X$ and $X_i$ are disjoint $k$-\emph{large} sets.

Notice that, for any set $X$ such that $X = \bigsqcup\bp{X_i : i \in n}$ for some $n > \min X$, if $X_i$ is $k$-large for all $i\in n$, then $X$ is $k+1$-large.

\begin{definition}
For given $h,k\in\N$ and for given $X \subseteq \N$, we define the following statements.
 \item $k$-$\LAR$ \footnote{Following the notation of \cite{Hajek} being $k$-large is equivalent to being $\omega^k$-large (or $\omega_0^k$-large). In this section we prove that in $\RCAs$ $k$-$\LAR^h$  is equivalent to $\Tot(\Ack_k^h)$ for any natural numbers $h,k$, as for $\omega^k$-large in \cite{Hajek}.}: any infinite set $X\subseteq \N$ contains a $k$-large set.
 \item $k$-$\LAR^h$: Given any  function $f: \N \to \N$ such that for any $n$ $f(n+1)< F_{h}(f(n))$, $\ran(f)$ contains a $k$-large set.
\end{definition}

\begin{proposition}[$\RCAo$]\label{proposition: WPHLAR}
For any $k\in\N$, we have
\[ \WPHs^{2}_{k}\Rightarrow k\text{-}\LAR.\]
\end{proposition}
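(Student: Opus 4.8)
The plan is to produce, from an infinite set $X$ together with the hypothesis $\WPHs^{2}_{k}$, a finite $k$-large subset, following the recursive shape of $k$-largeness. First I would record two preliminaries. The reduction $\WPHs^{2}_{k+1}\Rightarrow\WPHs^{2}_{k}$ holds in $\RCAo$ (read a $k$-coloring as a $(k{+}1)$-coloring with one unused color, which cannot occur on a weakly homogeneous set), so at level $k$ every instance with fewer colors is available. The base case $k=1$ is immediate: with a single color every increasing enumeration is weakly homogeneous, so $\WPHs^{2}_{1}$ is literally $1\text{-}\LAR$, and $\RCAo\vdash 1\text{-}\LAR$ since the first $\min X+1$ elements of $X$ already form a $1$-large set. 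The engine for climbing levels is the remark following the definition of largeness: a disjoint union of $n$ many $k$-large sets, with $n$ exceeding the minimum of the union, is $(k{+}1)$-large.

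The key step is to convert a weakly homogeneous $1$-large set into the block structure demanded by that remark. I would color $[X]^{2}$ by reading off largeness levels: for $a<b$ in $X$ set $P(\{a,b\})$ to be the largest $j<k$ such that $X\cap[a,b)$ is $j$-large, capped at $k-1$. This is legitimate in $\RCAo$ because \emph{``$S$ is $j$-large''} is a primitive recursive, hence $\Delta^0_1$, predicate of the finite set $S$ and of $j$. Applying $\WPHs^{2}_{k}$ yields a $1$-large weakly homogeneous set $H=\{h_0<\dots<h_{l-1}\}$ with $l>h_0$ in some color $c$, so each consecutive gap $X\cap[h_i,h_{i+1})$ is $c$-large; there are $l-1\ge h_0=\min H$ such disjoint gaps, and (modulo a harmless off-by-one in the block count) the remark assembles them into a $(c{+}1)$-large subset, which is exactly a $k$-large set when $c=k-1$. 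Phrased inductively instead, once lower-level largeness is in hand one defines a jump function $J$ by letting $J(a)$ be the least $b$ with $X\cap(a,b]$ containing a $k$-large subset, greedily parses $X$ into consecutive $k$-large blocks $S_0,S_1,\dots$ by primitive recursion, and takes $Y=\bigsqcup_{i\le m}S_i$ with $m=\min S_0$, so that $Y$ has $m+1>\min Y$ blocks, each $k$-large, hence is $(k{+}1)$-large.

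The hard part is the color bookkeeping, i.e.\ guaranteeing that the homogeneous color sits at the \emph{top} level. The naive largeness coloring above is monotone in $b$, and an adversarial coloring may place $H$ in a small color $c$, delivering only a $(c{+}1)$-large set; a single $1$-large weakly homogeneous run of $c$-large gaps cannot be regrouped up to level $k$, since $\min$-many blocks buy only one extra level. I expect the fix to require a descent/rank coloring in the style of Ketonen and Solovay \cite{SolovayKetonen}, where the color encodes the leading exponent of the residual ordinal below $\omega^{k}$ along the canonical descent of $X$, so that a $1$-large weakly homogeneous stretch is forced to exhaust a whole power and the completed descent witnesses $\omega^{k}$-largeness, which is $k$-largeness. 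This detour is also what keeps the argument uniform in $k$: the clean greedy route climbs one level at a time, but the tail form of ``$X$ has a $k$-large subset'' needed for the inductive step is $\Pi^0_2$ in $k$, beyond the $\Sigma^0_1$-induction of $\RCAo$, so the \emph{single} weakly homogeneous set furnished by $\WPHs^{2}_{k}$ must accomplish in one shot what an iterated choice of blocks cannot.
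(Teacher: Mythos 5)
You have the right skeleton (color intervals of $X$ by a largeness level, then use the $1$-large weakly homogeneous set to assemble $\geq\min$-many large blocks), and your diagnosis of the obstruction is exactly right: with the naive coloring, weak homogeneity only constrains \emph{consecutive} pairs, so the $(c{+}1)$-largeness of the union $[h_0,h_{l-1})$ shows up only in the color of the non-consecutive pair $\bp{h_0,h_{l-1}}$, which nothing forces to agree with $c$; hence an adversarial coloring can strand you at a low color. But the proposal stops there: the sentence ``I expect the fix to require a descent/rank coloring in the style of Ketonen and Solovay'' is precisely the missing proof, so as it stands this is a genuine gap rather than a complete argument.

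What the paper does to close it is worth comparing against your sketch. Instead of coloring $\bp{a,b}$ by the largeness of $X\cap[a,b)$ itself, it first builds, for every level $i<k$ simultaneously, a globally anchored greedy parse of $X$ into consecutive $i$-large blocks: $x^i_0=\min X$ and $x^i_{n+1}$ is the least later milestone $y$ with $[x^i_n,y)$ $i$-large. The color of $\bp{x,y}$ is then the \emph{maximum} $i$ such that $[x,y)$ contains a level-$i$ milestone. The point of anchoring is a chaining lemma (Claim~\ref{claim: WPHimpliesKlarge} in the paper): $l$ consecutive level-$i$ milestones whose first member is $<l$ span an $(i{+}1)$-large interval and therefore, by the greedy definition, contain a level-$(i{+}1)$ milestone. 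So if the homogeneous color were $i<k-1$, each consecutive gap of $H$ would contain a level-$i$ milestone, the resulting run of $l-1\geq h_0$ milestones would generate a level-$(i{+}1)$ milestone landing inside one of those consecutive gaps, and that gap's color would jump to $\geq i+1$ --- a contradiction at a \emph{consecutive} pair, which is all weak homogeneity gives you. This forces the color to be $k-1$ in one shot, with no induction on $k$, which also disposes of the $\Pi^0_2$-induction worry you raise at the end: the $k$ parses are built jointly by a single $\Delta^0_1$ recursion, so $\RCAo$ suffices. Your base case, the reduction $\WPHs^{2}_{k+1}\Rightarrow\WPHs^{2}_{k}$, and the block-assembly remark are all fine; what is missing is exactly this milestone construction and the chaining lemma that pushes the homogeneous color to the top.
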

\begin{proof}
Given any infinite $X \subseteq \N$, we want to find $L \subseteq X$ such that $L$ is $k$-large. For any $a,b \in \N$, let $[a,b) = \bp{x \in X : a \leq  x < b}$.  Define $k$-many sequences as follows:
\begin{align*}
	x^i_0 &= \min X;\\
	L_i &= \bp{y : \exists n \leq y (y= x^i_n)};\\
	x^i_{n+1} &= \min \left\{ y  : y \in L_i \wedge [x^i_n,y) \mbox{ is } i\mbox{-large}\right\}
\end{align*}

Observe that $x^0_n$ is the $n$-th element of $X$, and that $x^i_m$ may be undefined. However, given any $x \in X$, and $i,m \in \N$, we may decide whether $x$ is of the form $x^i_m$ or not. Any $x \in X$ is $x^0_n$ for some $n$. Moreover the following properties hold.

\begin{claim}\label{claim: WPHimpliesKlarge}
Let $i$ be a natural number.
\begin{enumerate}
\item If $a > \min X$ and $[a,b)$ is $i$-large then there exists $x^i_j \in (a,b]$ for some $j$.
\item If ${n_0} <\dots < {n_{l-1}}$ and $x^i_{n_0} < l$, then $[x^i_{n_0}, x^i_{n_{l-1}})$ is $(i+1)$-large.
\item If ${n_0} <\dots < {n_{l-1}}$ and $x^i_{n_0} < l$, then there exists $j \in \N$ such that $x^{i+i}_j \in (x^i_{n_0},x^i_{n_{l-1}}]$. 
\end{enumerate}
\end{claim}
\begin{proof}
 \begin{enumerate}
 \item  Let $j'$ be the maximum such that $x^i_{j'} \leq a$. Such element exists since $\min X = x^i_0 \leq a$. Then $[x^i_{j'}, b)$ is $i$-large, therefore $x^i_{j'+1}\leq b$ and by the choice of $j'$ $x^i_{j'+1}> a$. 
 \item  Observe that 
 \[
 		[x^i_{n_0}, x^i_{n_{l-1}})= [x^i_{n_0}, x^i_{n_{1}}) \cup \dots \cup [x^i_{n_{l-2}}, x^i_{n_{l-1}}).
 \]
 Since  $ n_{m+1} > n_m +1$ and $[x^i_{n_m}, x^i_{n_{m+1}})$ is $i$-large, we are done. 
 \item The thesis follows by points (1) and (2). \qedhere
%
%
 \end{enumerate}
\end{proof}

Then consider the coloring $P:[X]^2 \to k$, such that for any $x<y$ we have
\[
	P(\bp{x,y})=\max \left\{ i<k : \exists x^i_j \in [x,y) \right\}
\]
By applying $\WPHs^{2}_{k}$ there exists $H = \bp{h_0<\dots < h_{l-1}}$ which is weakly homogeneous and $1$-large. We claim that the color of this sequence is $k-1$, i.e. $P(\bp{h_0,h_1})=k-1$. There are two cases. If $h_0= \min X$ then $h_0=x^{k-1}_0$ and therefore $P(\bp{h_0,h_1})=k-1$ by definition. Otherwise assume by contradiction that $P(\bp{h_0,h_1})=i<k-1$ then for any $m < l-1$ the interval $[h_m, h_{m+1})$ contains an element of the form $x^i_{j_m}$.
Since $h_0 \neq \min X$, let $j_0$ be the maximum such that $x^i_{j_0}< h_0$. Therefore $x^i_{j_0} < x^i_{j_1} < \dots < x^i_{j_{l-2}}$ and, since $h_0 < l$, we have $x^i_{j_0} < l-1$. By applying point (3) we get that there exists $x^{i+1}_{j} \in (x^i_{j_0}, x^i_{j_{l-2}}]$, for some $j$. Since $x^{i+1}_{j} = x^i_{j'}$ for some $j'$ and by the definition of $j_0$ we have $x^{i+1}_{j} \geq h_0$.  Hence $x^{i+1}_j \in [h_0, h_{l-1})$ and this implies $P(\bp{h_0,h_{l-1}})\geq i+1$. Contradiction.

 By applying the same argument we have that $[h_0, h_{l-1})$ contains $(l-1)$-many $x^{k-1}_j$, hence $[h_0, h_{l-1})$ is $k$-large. 
\end{proof}

Due to the equivalence proved in the previous section we can easily obtain that for any natural number $k$:
\[ \forall k\ k\text{-}\TTo =\forall k\ k\text{-}\HCTo = \forall k \ \WPHs^2_k \geq \forall k\ k\text{-}\LAR.\]
Moreover we have that $\forall k \ k\text{-}\LAR \Rightarrow \forall k\  \Tot^*(\Ack_{k})$, and so the first goal is proved. 

\begin{proposition}[$\RCAo$] \label{proposition:LARtoTot}
$\forall k \ k\text{-}\LAR \Rightarrow \forall k \ \Tot^*(\Ack_{k})$
\end{proposition}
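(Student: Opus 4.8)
The plan is to reason within $\RCAo$, assume $\forall k\ k\text{-}\LAR$, and prove that for every $k$ and every function $f$ each member of $\Ack_k^f$ is total. First I would reduce the problem to the totality of the generators $F_{h,f}$ for $h\le k$: the remaining base functions (constants, projections, sum) are visibly total, composition preserves totality, and limited recursion preserves totality in $\RCAo$ once a total majorant is available --- and the $F_{h,f}$, being increasing, supply such majorants. I would also replace $f$ by $g(x)=f(x)+x+1$, which is strictly increasing, satisfies $g(x)>x$, dominates $f$ and lies in $\Ack_k^f$, so that proving the $F_{h,g}$ total yields the $F_{h,f}$ total. Thus the whole statement collapses to: for every $h$, every strictly increasing $f$ with $f(x)>x$, and every $a$, the recursion defining $F_{h,f}(a)$ terminates.

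The key device is to read a convergence certificate off a single large set. Given such an $f$ and a start $a$, let $O_a=\{a,f(a),f^2(a),\dots\}$ be the $f$-orbit of $a$, which exists by primitive recursion since $f$ is total. I would establish the following correspondence: the recursive unfolding of the computation of $F_{h,f}(a)$ has exactly the branching shape of the recursive decomposition of an $h$-large subset of $O_a$ with minimum $a$. Indeed the base clause $F_{0,f}(a)=f(a)+1$ matches ``$0$-large $=$ nonempty'' (a single $f$-step), while the clause $F_{h+1,f}(a)=F_{h,f}^{(a+1)}(a)$ matches ``$(h+1)$-large $=$ a disjoint union of at least $\min$ many $h$-large blocks''. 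Consequently, from an $h$-large $L\subseteq O_a$ with $\min L=a$ one can compute, by a single recursion running along the finite set $L$ (the greedy level-by-level decomposition of $L$ is $\Delta^0_1$, and its termination uses only $\Sigma^0_1$-induction on $|L|$), the value $F_{h,f}(a)$ together with the witnesses that all intermediate $F_{j,f}$-subcomputations converge, with $F_{h,f}(a)\le\max L$.

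Finally I would produce such a certificate from the hypothesis. Applying $h\text{-}\LAR$ to the infinite set $O_a$ yields an $h$-large $L\subseteq O_a$; its minimum is some $f^m(a)\ge a$, and to force the minimum down to $a$ I would invoke the closure properties of largeness: $k$-largeness is preserved under supersets fixing the minimum, and every $k$-large set is $(k-1)$-large. Both are provable by $\Sigma^0_1$-induction on $k$, since as a function of $(k,X)$ the predicate ``$X$ is $k$-large'' is $\Delta^0_1$ and the closure statements are then $\Pi^0_1$ in $k$. Using these, $L$ can be combined with the initial orbit segment $a,f(a),\dots,f^{m-1}(a)$ and finitely many further $(h-1)$-large blocks obtained from tails of $O_a$ into an $h$-large $L''$ with $\min L''=a$. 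Feeding $L''$ into the certificate lemma shows that $F_{h,f}(a)$ converges, completing the proof.

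The main obstacle is carrying out the certificate lemma in $\RCAo$ without an induction on $h$. The naive formulation ``for all $h$, $h$-large orbit sets certify $F_{h,f}$'' is $\Pi^0_2$ in $h$ and hence outside the reach of $\Sigma^0_1$-induction; the whole point of the largeness hypothesis is to replace this unavailable induction by a single finite combinatorial object. The delicate part is therefore to phrase the correspondence so that, with the finite large set in hand, the value (or at least the step count) of $F_{h,f}(a)$ is obtained by a bounded recursion along that set, with every quantity staying below $\max L$ and $|L|$, so that $\RCAo$ can verify termination of all subcomputations simultaneously.
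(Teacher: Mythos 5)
Your proposal is correct in outline and follows essentially the same route as the paper: both exploit the correspondence between the recursive definition of $k$-largeness and that of $F_{k,f}$, apply $k$-$\LAR$ to a set that is enumerated at least as fast as $f$ (you use the $f$-orbit of $a$, the paper uses $f''(a,\infty)$), and then verify the resulting bound on $F_{k,f}(a)$ by an induction in which every quantity is dominated by the finite large set, so that the induction formula is $\Pi^0_1$ and available in $\RCAo$. The only organizational difference is that the paper packages the large sets into a majorizing hierarchy $f_k(a)=\min\bp{|X| : X \text{ is $k$-large on } f''(a,\infty)}+1$ and proves $F_{k,f}(a)<f_k(a)$ by $\Pi^0_1$-induction on $k$, rather than extracting a pointwise computation certificate, which also lets it dispense with your minimum-adjustment step (both versions share the same harmless off-by-a-constant bookkeeping, absorbed by the outer $\forall k$).
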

\begin{proof}
Given an increasing function $f:\N \to \N$, we define for any $k \in \N$ the function $f_k$ as follows:
\[
	f_k(a)=\min\left\{ |X| : X \mbox{ is $k$-large on } f''(a,\infty)\right\}+1.
\]
Observe that $f_0(a) > f(a)+1$. For any $k$, if $k\text{-}\LAR$ holds, then we can define $f_k$. We claim that for any $ a \in \N$ 
\[
	f_{k}(a)> f^{a+1}_{k-1}(a).
\]
Let $X$ be $k$-large on $f''[a, \infty)$. By definition of $k$-large there exists $n> a$ and there exists $X_i$ $(k-1)$-large for any $i\in n$ such that $|X \setminus \bp{\min X}|= \bigsqcup\bp{ |X_i| : i\in n}.$ Hence since $n \geq \min X \geq f(a)+1 \geq a+1$:
\[
	f_{k}(a)\geq |X| > f_{k-1}^n(a) \geq f^{a+1}_{k-1}(a).
\]

We claim that
\[
	 \forall k \forall a\  \exists b\ < f_k(a) (F_{k,f}(a)=b).
\]
We prove it by induction on $k$ (this sentence is $\Pi^0_1$).  If $k=0$, $F_{0,f}(a)=f(a)+1<f_0(a)$. Assume that it holds for $k-1$, then the it is true also for $k$ since $f_{k}(a) > f^{a+1}_{k-1}(a)$ and, by induction hypothesis, $f_{k-1}(a)$ is bigger than $F_{k-1,f}(a)$. This proves $\forall k \ \Tot^*(\Ack_k)$.
\end{proof}

Note that we can prove by external induction on $k$ that $\RCAs \vdash k\text{-}\LAR^h \implies \Tot(\Ack_k^h)$, by slightly modifying the argument of Proposition \ref{proposition:LARtoTot}.

\subsubsection{From totality to termination}

Here we apply a result by Solovay and Ketonen to prove that if we have $\forall k \ \Tot^*(\Ack_{k})$, then we have $\forall k \ k\text{-}\TTo$.

\begin{proposition}[$\RCAo$]\label{Lemma: AckimpliesLAR}
$\forall k \ \Tot^*(\Ack_{k}) \implies  \forall k \ k\text{-}\LAR$.
\end{proposition}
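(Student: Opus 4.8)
The plan is to invert the estimate of Proposition~\ref{proposition:LARtoTot}: there a $k$-large set living in a tail of $\rng(f)$ was shown to be \emph{longer} than $F_{k,f}$, so here I would produce a $k$-large subset by building it greedily and bounding how far up in the given infinite set one must reach by a function of $\Ack_k^{g}$, where $g$ enumerates that set. Concretely, fix $k$ and an infinite $X\subseteq\N$, and let $g:\N\to\N$ be the strictly increasing enumeration function of $X$, which exists in $\RCAo$ since $X$ is infinite; note $g(n)\ge n$. I apply the hypothesis $\Tot^*(\Ack_k)$ to the oracle $f=g$, so that every function of $\Ack_k^{g}$, and in particular $F_{k,g}$, is total.

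First I would define, by recursion on the level, a greedy \emph{block builder} $H_\ell:\N\to\N$ returning the index at which a $\ell$-large subset of $X$ with least element $g(n)$ closes: $H_0(n)=n$ (the singleton $\bp{g(n)}$ is $0$-large), and $H_{\ell+1}(n)=\Phi_\ell^{(g(n))}(n)$, where $\Phi_\ell(m)=H_\ell(m+1)$, so that $H_{\ell+1}$ concatenates after the minimum $g(n)$ exactly $g(n)$-many consecutive $\ell$-large blocks. By the very definition of $(\ell+1)$-largeness, the back-to-back construction makes the interval $\bp{g(j): n\le j\le H_{\ell+1}(n)}$ into an $(\ell+1)$-large set whose minimum $g(n)$ equals the number $g(n)$ of blocks used; the correctness of this is an easy induction on $\ell$, mirroring Claim~\ref{claim: WPHimpliesKlarge}. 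Thus, once $H_k(0)$ is defined, the interval it describes is a $k$-large subset of $X$ with least element $\min X=g(0)$, which is exactly what $k\text{-}\LAR$ asks for.

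It remains to see that $H_k$ is total, and this is where $\Tot^*(\Ack_k)$ enters. The recursion for $H_{\ell+1}$ iterates $H_\ell$ a number of times equal to $g(n)$, which is precisely the $F_{k,g}$-style nested iteration. I would prove, by induction on $\ell\le k$, that $H_\ell$ is dominated by a function of $\Ack_\ell^{g}$: the base case uses $H_0(n)=n\le g(n)+1=F_{0,g}(n)$, and the step uses that $\Ack_{\ell+1}^{g}$ is closed under iterating an $\Ack_\ell^{g}$-function a number of times bounded by an $\Ack_{\ell+1}^{g}$-function (here the iteration count $g(n)$ is $\le F_{0,g}(n)$, so the $g(n)$-fold iteration of $H_\ell$ is majorised by $F_{\ell+1,g}\circ g$, still in $\Ack_{\ell+1}^{g}$). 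Being dominated by a total $\Ack_\ell^{g}$-function and definable by limited recursion, $H_\ell$ is itself total; in particular $H_k$ is total and $H_k(0)$ exists.

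The main obstacle I anticipate is exactly this domination step: reconciling the iteration count $g(n)$ of the combinatorial construction with the count $n+1$ built into $F_{\ell+1,g}(n)=F_{\ell,g}^{(n+1)}(n)$, so that the greedy builder stays inside the same level $\Ack_{\ell+1}^{g}$ of the relativized hierarchy rather than spilling into the next. The discrepancy is harmless because $g(n)\le F_{0,g}(n)$ and one level of the hierarchy absorbs a substitution, but making this precise via the closure properties of $\Ack_k^{g}$ is the delicate point. A secondary point of care is the induction on the level $\ell$ in the weak base theory: since ``$X$ has a $k$-large subset'' and the bounding facts about $H_\ell$ are $\Sigma^0_1$ and the construction is uniform in $\ell$, the induction can be carried out (externally for the level-by-level version, and using the uniformity of the $F_{\cdot,g}$ together with $\forall k\,\Tot^*(\Ack_k)$ for the full statement), so that the two $\forall k$ quantifiers in the claim are handled correctly.
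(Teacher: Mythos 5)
Your proof is correct in substance and its combinatorial core coincides with the paper's: a tail of $X$ is decomposed greedily into nested large blocks, and the relativized fast-growing function $F_{k,g}$ measures how far into $X$ the decomposition must reach. Where you differ is in the logical packaging, and in $\RCAo$ that is most of the game. The paper never introduces the doubly-recursive block-builder $H_\ell$: it takes the functions $F_{\ell,f}$ (with $f$ the increasing enumeration of $X$), whose totality is exactly what $\Tot^*(\Ack_k)$ supplies, and proves by induction on $\ell$ that $[n,F_{\ell,f}(n))\cap X$ is $\ell$-large for every $n$. Since largeness of a coded finite set is $\Delta^0_0$ and $F_{\ell,f}$ is already given as total, that induction formula is $\Pi^0_1$ and hence available in $\RCAo$ even for nonstandard $\ell\le k$. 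Your function-first route must in addition prove that $H_\ell$ is total for all $\ell\le k$, a priori a $\Pi^0_2$ induction; to push it through you have to bound not only the value $H_\ell(n)$ by something like $F_{\ell,g}(g(n))$ but also the computation witness for the $\Sigma^0_1$ graph of $(\ell,n)\mapsto H_\ell(n)$, so that the induction statement becomes (bounded) $\Pi^0_1$ again. You correctly identify this as the delicate point and the repair is standard, but the paper's bound-first organization dissolves it entirely. One detail where your version is actually sharper: $(\ell+1)$-largeness requires at least $\min$-many blocks above the minimum, and you arrange exactly $g(n)$ blocks above the minimum $g(n)$ by anchoring every block at an element of $X$, whereas the paper's split of $[n,F_{\ell+1,f}(n))\cap X$ into $n+1$ pieces only counts $n+1$ blocks above a minimum that can exceed $n+1$ when $X$ has large gaps; your bookkeeping is the one to keep.
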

\begin{proof}
Let $X$ be a set and let $ f: \N \to X$ the increasing enumeration of $X$. We claim that
\[
	\forall k \forall n [n, F_{k,f}(n))\cap X \mbox{ is } k\text{-large}.
\]
We prove it by induction (since this formula is $\Pi^0_1$).  If $k=0$ then
\[
	f(n)\in [n,f(n)+1) \cap X.
\]
Suppose the statement is true for $k$ and let us prove it for $k+1$. Fix $n$, by definition 
$F_{k+1}(n)= F_{k,f}^{n+1}(n)$.  Hence we have
\[
	[n, F_{k+1,f}(n))\cap X =([n, F_{k,f}(n))\cap X) \cup \dots \cup  ([F_{k,f}^n(n), F_{k,f}^{n+1}(n))\cap X).
\]
Then it is $k+1 \text{-large}$, since for each $i \in n$, 
\[
	[F_{k,f}^i(n), F_{k,f}^{i+1}(n))\cap X
\]
is $k\text{-large}$.
\end{proof}

Observe that almost the same argument as above shows that for any natural number $k$, $\RCAs \vdash \Tot(\Ack_k^h) \implies k\text{-}\LAR^h$.

By Proposition  \ref{proposition:LARtoTot} and Proposition \ref{Lemma: AckimpliesLAR} we get:
\begin{corollary}[$\RCAs$]\label{Prop: AckimpliesLARGE} $\forall k \ \Tot^*(\Ack_{k}) = \forall k \ k\text{-}\LAR$.
\end{corollary}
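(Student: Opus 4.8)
The plan is to obtain the asserted biconditional by pasting together the two implications established in the two immediately preceding propositions, reading the symbol $=$ as provable equivalence over the ambient base theory. Proposition~\ref{proposition:LARtoTot} supplies the arrow $\forall k\, k\text{-}\LAR \Rightarrow \forall k\, \Tot^*(\Ack_{k})$, and Proposition~\ref{Lemma: AckimpliesLAR} supplies the converse $\forall k\, \Tot^*(\Ack_{k}) \Rightarrow \forall k\, k\text{-}\LAR$. Conjoining the two yields $\forall k\, \Tot^*(\Ack_{k}) = \forall k\, k\text{-}\LAR$, with no new combinatorics contributed by the corollary itself.

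The one genuine point to verify is the base theory. Both propositions are phrased over $\RCAo$, whereas the corollary is claimed over the weaker $\RCAs$, which carries only $\Delta^0_0$-induction; since $\RCAo \vdash \RCAs$, a claim stated over $\RCAs$ is formally stronger than its $\RCAo$ counterpart, so one cannot simply cite the $\RCAo$ propositions verbatim. Each proposition's proof runs an induction on $k$ over a $\Pi^0_1$ statement --- the bound $F_{k,f}(a) < f_k(a)$ in Proposition~\ref{proposition:LARtoTot} and the $k$-largeness of $[n, F_{k,f}(n)) \cap X$ in Proposition~\ref{Lemma: AckimpliesLAR} --- which is licensed in $\RCAo$ but not immediately in $\RCAs$. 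I would therefore route the argument through the relativized, level-by-level versions recorded in the remarks following each proposition, namely $\RCAs \vdash k\text{-}\LAR^h \implies \Tot(\Ack_k^h)$ and $\RCAs \vdash \Tot(\Ack_k^h) \implies k\text{-}\LAR^h$, and assemble both directions of the $\forall k$ equivalence from these rather than from an internal induction on $k$.

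Accordingly, I expect the hard part to live entirely in the two preceding propositions and in that induction-strength bookkeeping; the corollary adds nothing beyond the conjunction of the two arrows. The write-up should be essentially a single line citing Propositions~\ref{proposition:LARtoTot} and~\ref{Lemma: AckimpliesLAR}, with the $\RCAs$ caveat discharged by the accompanying remarks rather than by any fresh combinatorial argument.
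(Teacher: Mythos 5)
Your core plan coincides with the paper's proof: the corollary is obtained by simply conjoining Proposition~\ref{proposition:LARtoTot} and Proposition~\ref{Lemma: AckimpliesLAR}, and the paper's write-up is literally that one line with no further argument. So the substance of your proposal is in the right place, and your instinct that the $\RCAo$-versus-$\RCAs$ labelling deserves comment is reasonable --- the paper itself even notes afterwards that the proofs of the two propositions cannot be carried out within $\RCAs$, yet it offers no justification for the $[\RCAs]$ tag on the corollary.

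The one step of your proposal that would fail is the proposed repair. The remarks you want to route through assert, for each \emph{standard} $k$ and by \emph{external} induction, that $\RCAs \vdash k\text{-}\LAR^h \implies \Tot(\Ack_k^h)$ and conversely. Two problems: first, these concern the $h$-superscripted principles, which quantify only over $F_h$-controlled enumerations and are genuinely different statements from $k\text{-}\LAR$ and $\Tot^*(\Ack_k)$; second, and decisively, a schema established by external induction covers only standard values of $k$, so inside a model of $\RCAs$ with nonstandard elements it says nothing about the internal quantifier $\forall k$. Hence the full equivalence $\forall k\ \Tot^*(\Ack_{k}) = \forall k\ k\text{-}\LAR$ cannot be assembled from those remarks. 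If one wants to discharge the base-theory caveat, the available route is the one the paper uses for the analogous $\TTo$/$\HCTo$/$\WPHs$ equivalences: show that each side of the equivalence already recovers $\Sigma^0_1$-induction over $\RCAs$ (for instance, $\forall k\ k\text{-}\LAR$ yields $1$-$\LAR$, which is another form of $\II$), after which the two $\RCAo$ propositions apply verbatim. Absent that, the safe reading of the corollary is over $\RCAo$, exactly as the two propositions are stated.
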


In \cite{SolovayKetonen} Solovay and Ketonen proved the following result. 

\begin{theorem}[$\RCAs$, Solovay/Ketonen  \cite{SolovayKetonen}]\label{Theorem: LARimpliesWPH}
For any natural number $k$, 
\[(k+5)\text{-}\LAR^h \implies \PH^{h,2}_{k}\]
\end{theorem}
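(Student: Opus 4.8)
The plan is to separate the two roles played by the hypothesis. The growth condition on $f$ is needed only to produce a single sufficiently large set inside $\ran(f)$; everything after that is a purely finite combinatorial statement about large sets. Concretely, given $f$ with $f(n+1)<F_h(f(n))$ for all $n$ together with a coloring $P:[\ran(f)]^{2}\to k$, I would first invoke $(k+5)\text{-}\LAR^{h}$ to obtain a $(k+5)$-large set $L\subseteq\ran(f)$. From this point $f$ disappears, and it remains to prove the finite lemma at the heart of the Ketonen--Solovay estimate: \emph{for every $(k+5)$-large set $L$ and every $P:[L]^{2}\to k$ there is a $1$-large $P$-homogeneous $Y\subseteq L$}; such a $Y$ is then the witness required by $\PH^{h,2}_{k}$. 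Since $k$ ranges over standard naturals, it suffices to establish this for each fixed $k$, so the finite recursions below have standard length and are handled by the bounded reasoning (with the exponential) available in $\RCAs$; in particular no instance of $\Tot(F_h)$ is needed, because the largeness of $L$ is handed to us outright rather than computed.

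For the combinatorial lemma I would argue in two stages. First, \textbf{end-homogenization}: extract from $L$ a subset $Z$ that is \emph{end-homogeneous} for $P$, meaning that for each $z\in Z$ the color $P(\bp{z,w})$ is constant over all $w\in Z$ with $w>z$. This is built greedily, repeatedly replacing the current reservoir by the largest of the $k$ color classes cut out by the color of its least element against the rest. Second, \textbf{pigeonhole}: end-homogeneity collapses $P$ into a point-coloring $c:Z\to k$, where $c(z)$ is the common color of $z$ against larger elements, and any $c$-monochromatic $W\subseteq Z$ is automatically $P$-homogeneous (for $w<w'$ in $W$ one has $P(\bp{w,w'})=c(w)$, the common color). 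Selecting the most frequent color and ensuring the surviving class still outnumbers its own minimum yields the desired $1$-large homogeneous $Y$. The constant $5$ is exactly what is consumed in the largeness bookkeeping: using the recursive definition of $k$-largeness together with the interval and splitting estimates already isolated in Claim~\ref{claim: WPHimpliesKlarge}, I would show that each stage degrades largeness by only a \emph{bounded} number of levels, so that beginning from $(k+5)$-largeness leaves enough margin to force a genuinely $1$-large output.

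The main obstacle is precisely this largeness arithmetic, and two points are delicate. First, the step ``keep the largest of the $k$ color classes'' must be shown to cost only a controlled amount of $\omega$-power largeness (in the $\omega^{k}$-large formulation matching the footnote to the definition of $k$-$\LAR$), rather than a factor that grows with the size of the reservoir; this is where the structural decomposition of $\omega^{a+1}$-large sets into many $\omega^{a}$-large blocks does the work. Second, $1$-largeness is the condition $\min Y<|Y|$, which refers to the minimum of the \emph{output} set, so the extraction must keep the minimum of the surviving set from growing too quickly relative to its cardinality. Both are exactly the phenomena analysed by Ketonen and Solovay, so I would import their splitting lemmas and check that, since every manipulated set is finite and every largeness assertion about a given finite set is decidable using the exponential, their proofs transcribe verbatim into $\RCAs$. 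Assembling the two stages then delivers the lemma with the stated slack, and combining it with the reduction of the first paragraph gives $\PH^{h,2}_{k}$.
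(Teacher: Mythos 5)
The paper does not actually prove this theorem: it is imported verbatim from Ketonen and Solovay \cite{SolovayKetonen}, so there is no in-paper argument to compare against. Your sketch --- use $(k+5)$-$\LAR^h$ to extract a $(k+5)$-large $L\subseteq\ran(f)$, reduce to the finite statement that every $(k+5)$-large set is Ramsey-dense for $k$ colours, then end-homogenize, pigeonhole, and control the largeness via the Ketonen--Solovay splitting lemmas --- is precisely the standard argument from that source, and deferring the quantitative bookkeeping (including the verification that the constant $5$ absorbs the cost of both stages) to their lemmas leaves your proof no less self-contained than the paper, which cites the result without proof.
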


Thus, by composing Theorem \ref{Theorem: LARimpliesWPH} and Corollary \ref{Prop: AckimpliesLARGE} we obtain

\begin{corollary}[$\RCAo$] $\forall k \ \Tot^*(\Ack_{k}) \implies \forall k \ \PHs^2_k.$
\end{corollary}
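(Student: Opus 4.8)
The plan is to chain the two preceding results, using $k\text{-}\LAR$ (raised by five levels) as the bridge between relativized totality and the starred Paris--Harrington principle. First I would observe that, since $\forall k\ \Tot^*(\Ack_k)$ is exactly equivalent to $\forall k\ k\text{-}\LAR$ by Corollary \ref{Prop: AckimpliesLARGE}, it suffices to derive $\forall k\ \PHs^2_k$ from $\forall k\ k\text{-}\LAR$. So I would fix $k$ and aim to prove the single instance $(k+5)\text{-}\LAR \implies \PHs^2_k$; the full statement then follows by instantiating the universally quantified largeness parameter at $k+5$ and quantifying back over $k$.

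The core step is a starred analogue of the Solovay--Ketonen bound in Theorem \ref{Theorem: LARimpliesWPH}. Given an arbitrary infinite set $Y$ and a coloring $P:[Y]^2 \to k$, I would first apply $(k+5)\text{-}\LAR$ to $Y$ to extract a $(k+5)$-large subset $L \subseteq Y$, and then run the Solovay--Ketonen combinatorics on $L$ and $P\rest[L]^2$ to produce a $1$-large $P$-homogeneous $H \subseteq L \subseteq Y$. Since $H \subseteq Y$ is $1$-large and homogeneous, this is precisely the conclusion of $\PHs^2_k$.

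The main obstacle is that Theorem \ref{Theorem: LARimpliesWPH} is stated only in the $h$-relativized form $(k+5)\text{-}\LAR^h \implies \PH^{h,2}_k$, whereas I need the starred form for an arbitrary infinite $Y$ whose increasing enumeration need not be bounded by $F_h$; indeed, passing to a subset of $Y$ only makes the enumeration grow faster, so one cannot reduce the starred statement to an $h$-instance by thinning. The point I would have to make carefully is that the $F_h$-growth hypothesis on the enumerating function plays no role in the combinatorial heart of the argument: what Solovay and Ketonen actually establish is the finitary fact that every $(k+5)$-large finite set admits a $1$-large homogeneous subset for every $k$-coloring, and this fact is insensitive to how sparsely the ambient set sits inside $\N$. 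Extracting this finitary core, which is provable already in $\RCAs$, and applying it to the subset $L$ furnished by starred $k\text{-}\LAR$ is exactly the adaptation from the $h$-version to the starred version; once it is in place, the composition with Corollary \ref{Prop: AckimpliesLARGE} is immediate. A minor bookkeeping point to verify is that $(k+5)\text{-}\LAR$ is a genuine instance of $\forall k\ k\text{-}\LAR$ and that the five extra levels of largeness are exactly what the Solovay--Ketonen bound consumes.
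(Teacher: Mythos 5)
Your proposal is correct and follows exactly the paper's route: the paper derives this corollary simply by composing Corollary \ref{Prop: AckimpliesLARGE} ($\forall k\ \Tot^*(\Ack_k) = \forall k\ k\text{-}\LAR$) with the Solovay--Ketonen bound of Theorem \ref{Theorem: LARimpliesWPH}, instantiated at $k+5$. If anything you are more careful than the paper, which silently passes from the $h$-relativized form $(k+5)\text{-}\LAR^h \implies \PH^{h,2}_k$ to the starred form needed for an arbitrary infinite $Y$; your observation that the Solovay--Ketonen combinatorics is a finitary statement about $(k+5)$-large sets, insensitive to how sparsely they sit in $\N$, is precisely the justification this step requires.
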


Since for any $h$, $\PH_k^{h,2} \geq \WPH_k^{h,2} = k\text{-}\TT^h_b$, $\Tot(F_{k+h})\geq k\text{-}\LAR^h$ and thanks to Theorem \ref{Theorem: LARimpliesWPH} we get the following bound.

\begin{corollary} \label{cor: k+5}
For any $k, h \in \N$ and for any $R \subseteq \N^2$, $R$ is bounded by $F_{k+h+5}$ if there exist $R_0, \dots, R_{k-1} \subseteq \N^2$ such that $ R_0 \cup \dots \cup R_{k-1} \supseteq R^+$ and each $R_i$ is bounded by $F_h$.
\end{corollary}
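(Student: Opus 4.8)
The plan is to turn the qualitative chain
$\Tot(F_{k+h+5})\Rightarrow(k+5)\text{-}\LAR^h\Rightarrow\PH^{h,2}_k\Rightarrow\WPH^{h,2}_k=k\text{-}\TT^h_b$
(combining Theorem~\ref{Theorem: LARimpliesWPH}, Theorem~\ref{theorem:WPHbimpliesTTb}, and the relativised form of Proposition~\ref{Lemma: AckimpliesLAR}) into a quantitative statement, reading the length bound $F_{k+h+5}(a_0)$ off the witnesses. Fix an $R$-decreasing sequence $\ap{a_0,\dots,a_{l-1}}$. Since the relations sit in the framework of $k\text{-}\TT^h_b$, this sequence satisfies $a_{j+1}<F_h(a_j)$, and since $R^+\subseteq R_0\cup\dots\cup R_{k-1}$ we have $a_jR^+a_i$ for $i<j$, so $(a_j,a_i)\in\bigcup_{c<k}R_c$. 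The goal is $l\le F_{k+h+5}(a_0)$, so I would argue by contradiction, assuming $l>F_{k+h+5}(a_0)$.

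First I would reuse the spreading device of Theorem~\ref{thm-1}: put $f(n)=\max\bp{F_h(a_j)+(n-j+1):j\le n}$. As there, $f$ is strictly increasing, satisfies $f(n+1)<F_h(f(n))$ (this is exactly where the hypothesis $a_{j+1}<F_h(a_j)$ is spent) and $f(n)\ge F_h(a_n)+1$; in particular $\min X=f(0)=F_h(a_0)+1>a_0$, where $X=\bp{f(0),\dots,f(l-1)}$. On $X$ I would place the colouring $P(\bp{f(n),f(m)})=\min\bp{c<k:a_mR_ca_n}$ for $n<m$, which is well defined by the previous remark.

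Next comes the largeness step. By the relativised form of Proposition~\ref{Lemma: AckimpliesLAR} (the remark that $\RCAs\vdash\Tot(\Ack^h_{k+5})\implies(k+5)\text{-}\LAR^h$, with explicit witness $[n,F_{k+5,f}(n))\cap\ran(f)$ being $(k+5)$-large), the assumption $l>F_{k+h+5}(a_0)$ guarantees that $X$, being the range of an $F_h$-growth function starting above $F_h(a_0)$, already contains a $(k+5)$-large subset $X'$. Applying the Solovay--Ketonen bound (Theorem~\ref{Theorem: LARimpliesWPH}) to $P\rest[X']^2$ yields a $1$-large homogeneous set $H=\bp{f(n_0)<\dots<f(n_{m-1})}$ in some colour $c$, so $m>\min H=f(n_0)\ge F_h(a_{n_0})+1$. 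Homogeneity gives $a_{n_{t+1}}R_ca_{n_t}$ for all $t$, hence $\ap{a_{n_0},\dots,a_{n_{m-1}}}$ is an $R_c$-decreasing sequence from $a_{n_0}$ of length $m$; since $F_h$ is a bound for $R_c$ this forces $m\le F_h(a_{n_0})<m$, a contradiction. (Full homogeneity is more than is needed: a weakly homogeneous set already supplies the consecutive $R_c$-steps used for a bound, which is why the same $(k+5)$-large input serves here and the bounded and $H$-bounded cases differ only through $\WPH$ versus $\PH$.)

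The main obstacle is the bookkeeping hidden in the largeness step, namely checking that the threshold for $X$ to contain a $(k+5)$-large subset really collapses into $F_{k+h+5}(a_0)$. Since $\min X=F_h(a_0)+1$ and $X$ enumerates an $F_h$-growth function, the number of points needed to reach $(k+5)$-largeness is about $F_{k+5,f}(f(0))$, and in the worst (small-gap) case one must verify $F_{k+5,f}(f(0))\le F_{k+h+5}(a_0)$, i.e.\ that relativising by a function of $F_h$-growth together with the harmless additive constants amounts to raising the subscript by $h$ (roughly $F_{k+5}(F_h(x))\le F_{k+h+5}(x)$). This is precisely the inequality $\Tot(F_{k+h})\ge k\text{-}\LAR^h$ invoked before the corollary, read with $k$ replaced by $k+5$, and it follows from the standard monotonicity and composition properties of the fast-growing hierarchy. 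I would isolate it as a short lemma so that the combinatorial core above stays clean.
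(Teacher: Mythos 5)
Your argument is correct and follows the paper's own route exactly: the paper obtains this corollary by chaining $\Tot(F_{k+h+5})\Rightarrow(k+5)\text{-}\LAR^{h}\Rightarrow\PH^{h,2}_{k}\Rightarrow\WPH^{h,2}_{k}=k\text{-}\TT^{h}_{b}$ (Proposition~\ref{Lemma: AckimpliesLAR} relativised, Theorem~\ref{Theorem: LARimpliesWPH}, Theorem~\ref{theorem:WPHbimpliesTTb}), and you simply unwind that chain with explicit witnesses, isolating as a lemma the same quantitative estimate (comparing $F_{k+5,f}$ with $F_{k+h+5}$) that the paper leaves implicit in the assertion $\Tot(F_{k+h})\ge k\text{-}\LAR^{h}$. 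The one point worth keeping visible is that the growth restriction $a_{j+1}<F_{h}(a_{j})$ you import from the framework of $k\text{-}\TT^{h}_{b}$ is genuinely needed (the literal statement, for arbitrary $R$-decreasing sequences, fails), but the paper's proof relies on it in exactly the same way, so this is a shared caveat rather than a gap in your argument.
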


Observe that the proofs of Proposition \ref{proposition: WPHLAR}, Proposition \ref{proposition:LARtoTot} and of Proposition \ref{Lemma: AckimpliesLAR} cannot be carried out within $\RCAs$. In fact $\Sigma^0_1$-induction is required to apply unbounded primitive recursive definitions, unbounded minimalization, and $\Pi^0_1$-induction. 

\subsection{From bounds to transition invariants} \label{Subection: linearlybounded}

Here we study a kind of vice versa of the results obtained in the previous subsections. Let $k$ be a natural number. Assume that we have a deterministic relation $R$ which is bounded by $F_k$, how many linearly bounded relations do we need to obtain a transition invariant? In here we prove that if $R$ is bounded by $F_k$ (the usual $k$-th fast growing function defined in the previous section) then it has a $k+2$-disjunctively well-founded transition invariant bounded by $F_0$ and that if $R$ is deterministic then there exists a $k+2$-disjunctively linearly bounded one. Up to now we do not know if such number is the minimum possible. 

\begin{thm}[$\RCAs + \Tot(F_k)$]\label{thm-sharp2}
For any deterministic  transition relation $R\subseteq\N^{2}$, $R$ is bounded by $F_{k}$ only if there exists $T_{0},\dots, T_{k+1}\subseteq \N^{2}$ such that $R^{+}\subseteq T_{0}\cup\dots\cup T_{k+1}$ and each $T_{i}$ is bounded by $F_{0}$.
\end{thm}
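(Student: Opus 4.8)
The plan is to reduce the statement to a purely combinatorial decomposition of a single weight function, and then to build that decomposition using the Ketonen--Solovay analysis of the fast-growing hierarchy that is already available here via $\Tot(F_k)$. The starting observation is an elementary reduction: \emph{any relation $T\subseteq\N^{2}$ admitting a weight function $\phi:\N\to\N$ with $\phi(x)\le x$ for all $x$ is bounded by $F_{0}$.} Indeed, along a $T$-decreasing sequence $\langle a_{0},\dots,a_{\ell-1}\rangle$ the values $\phi(a_{0})>\phi(a_{1})>\dots\ge 0$ are distinct, whence $\ell\le\phi(a_{0})+1\le a_{0}+1=F_{0}(a_{0})$. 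So it suffices to exhibit $k+2$ relations covering $R^{+}$, each carrying a weight function bounded by the identity.

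Next I would set up the rank. Since $R$ is deterministic, each state $z$ has a unique maximal $R$-decreasing sequence (its orbit); let $\ell(z)$ be its length. Under $\Tot(F_k)$ and the hypothesis that $R$ is bounded by $F_{k}$, the function $\ell$ is total and satisfies the \emph{global} bound $\ell(z)\le F_{k}(z)$ at \emph{every} state, together with $\ell(y)=\ell(z)-1$ whenever $y\mathbin{R}z$. In particular $\ell$ is a weight function for $R$ and $R^{+}\subseteq\bp{(y,z):\ell(y)<\ell(z)}$, so the whole task becomes the splitting of the descent of $\ell$ into $k+2$ pieces with identity-bounded weight functions. For $k=0$ this is immediate: $\ell(z)\le F_{0}(z)=z+1$ gives $\ell(z)-1\le z$, so $T_{0}:=R^{+}$ with weight function $\ell-1$ already works and $T_{1}:=\emptyset$. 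The general case I would organize by (external) induction on $k$.

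For the inductive step I would assign to each state $z$ a Hardy/Ketonen--Solovay \emph{address}, a tuple $(d_{0}(z),\dots,d_{k}(z))$ of digits computed relative to the fast-growing hierarchy based at the numeric value $z$ and refining $\ell$, arranged so that an $R^{+}$-step strictly decreases the tuple in the lexicographic order. I would then set $T_{i}$ to consist of those pairs $(y,z)$ for which $d_{i}$ strictly decreases while all higher-indexed digits are unchanged, reserving one further relation for the coarsest level. The digit $d_{i}$ then serves as the weight function for $T_{i}$, so identity-boundedness of the digits, together with the reduction of the first paragraph, makes each $T_{i}$ be $F_{0}$-bounded; lexicographic monotonicity of the address guarantees that every pair of $R^{+}$ lands in some $T_{i}$, so that $R^{+}\subseteq T_{0}\cup\dots\cup T_{k+1}$.

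The hard part is precisely the definition of these digits. The recursion $F_{k}=F_{k-1}^{(\mathrm{id}+1)}$ iterates with an \emph{increasing} base, whereas the numeric value of the state may move up and down along the orbit; consequently a naive mixed-radix expansion of $\ell(z)$ need not have digits bounded by the current value, and the number of level-$(k-1)$ blocks need not be bounded by $z+1$. This is exactly the phenomenon controlled by largeness: along the lines of Proposition~\ref{Lemma: AckimpliesLAR} an interval of length $F_{k-1}$ based at a value is $(k-1)$-large, and the global bound $\ell(z)\le F_{k}(z)$ forces the address to have at most $k+1$ significant coordinates, each bounded by the current value up to the constant slack that accounts for the $+2$ rather than $+1$ many relations. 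I would therefore carry out the digit construction through the Ketonen--Solovay block decomposition supplied by $\Tot(F_k)$ (the same machinery underlying Theorem~\ref{Theorem: LARimpliesWPH} and Corollary~\ref{cor: k+5}), verifying simultaneously, by induction on $k$, the lexicographic monotonicity of the address along $R^{+}$ and the identity bound on each digit. The bookkeeping required to keep the digits small as the base changes is where the real effort lies, and the loss of one extra component is the expected cost of that bookkeeping.
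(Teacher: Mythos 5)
Your proposal has the right global shape --- cover $R^{+}$ by relations of the form ``coordinate $i$ of a lexicographically decreasing address drops while all higher coordinates are unchanged'' --- and your opening reduction (a weight function $\phi$ with $\phi(x)\le x$ for all $x$ yields an $F_{0}$-bound) and the base case $k=0$ are correct. But the argument stops exactly where the theorem begins: the digits $d_{0},\dots,d_{k+1}$ are never constructed. You yourself flag their definition as ``where the real effort lies'' and then appeal generically to the Ketonen--Solovay machinery behind Theorem \ref{Theorem: LARimpliesWPH} and Corollary \ref{cor: k+5}; those results go in the opposite direction (from totality or largeness to partition statements about intervals) and do not produce an assignment of identity-bounded, lexicographically monotone digits to the states of an orbit. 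So the central construction is missing, not merely compressed.

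Two concrete points suggest the plan cannot be completed in the form you state it. First, the paper's proof begins by splitting $R^{+}$ into the pairs along which the numeric value of the state decreases --- these form a relation that is $F_{0}$-bounded for free --- and the remaining relation $T_{<}$, along whose chains the numeric value is monotone; only then does it build the hierarchy of ranks, setting $\rank_{i}(x)\ge n+1$ when some reachable $y$ has $d_{T_{<}}(x,y)\ge F_{i}(x)$ and $\rank_{i}(y)\ge n$, and using determinism to linearly order the reachable states so that these ranks are monotone. Your plan omits this split, and working with $\ell$ directly forces every digit to stay below a numeric value that can shrink along the orbit, which is precisely the obstruction you name without resolving. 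Second, and more importantly, the paper's ranks are \emph{not} bounded by the identity, and its proof does not need them to be: for $i>0$ the $F_{0}$-bound on $T_{i}=\bp{(x,y): x T_{<} y \wedge i=\min\{j\le k:\rank_{j}(x)=\rank_{j}(y)\}}$ is obtained by showing that if $\rank_{i-1}$ dropped more than $x_{0}+1$ times along a $T_{i}$-chain, one could splice together $x_{0}+1$ successive jumps of $T_{<}$-distance at least $F_{i-1}(\cdot)$ into a single jump of distance at least $F_{i-1}^{(x_{0}+1)}(x_{0})=F_{i}(x_{0})$, forcing a drop in $\rank_{i}$ and contradicting the definition of $T_{i}$. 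Your requirement that each digit satisfy $d_{i}(x)\le x$ everywhere is therefore strictly stronger than what the known argument delivers, and you give no evidence that it is achievable. Until the digits are actually defined and both the lexicographic monotonicity and the identity bound are verified, this is a proof strategy rather than a proof.
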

\begin{proof}
Let $R\subseteq \N^{2}$ be a deterministic  transition relation which is bounded by $F_{k}$. Note that for a deterministic  transition relation $R$ generated by a transition function, $R^{+}$ is $\Delta^{0}_{1}$-definable, thus it exists as a set within $\RCAs$.
Define $T_{<}$ and $T_{>}$ as
\begin{align*}
x T_{<} y &\leftrightarrow x R^{+}y \wedge x < y,\\
x T_{>} y &\leftrightarrow x R^{+}y \wedge x > y. 
\end{align*}
Trivially, $T_{<}$ is bounded by $F_{k}$ because every bound for $R$ is a bound for $R^+$ and for $T_<$, and $R^{+}=T_{<}\cup T_{>}$. 
Now, define $d_{T_{<}}:\N^{2}\to\N\cup\{\infty\}$ as
\begin{align*}
d_{T_{<}}(x,y)=
\begin{cases}
 \max\{m: \E \langle x_{i}: i\le m \rangle \mbox{ such that } x=x_{0}T_{<}x_{1}T_{<}\cdots T_{<}x_{m}=y \} & \mbox{if $xT_{<}y$},\\
 \infty & \mbox{otherwise}.
\end{cases}
\end{align*}
$T_>$ is decreasing, therefore it is bounded by $\id(x) = x$ and with more reason by $F_0(x) = x+1$. Hence we only need to decompose $T_{<}$ into $k+1$-many $F_{0}$-bounded relations.
For each $x$, by $F_k$-boundedness, we may effectively compute the list of states $x = x_0, \dots, x_m$ reachable from $x$ by $R$ and with more reason by $T_<$. In fact the finite sets of states which are reachable from $x$ is $\bigcup \bp{A_n : n < F_k(x)}$, where 
\[
A_n = \bp{y : \exists \ap{x_0, \dots, x_n} < y (x=x_0 R \dots R x_n=y)}.
\]
Thus, for each $i \le k$ we define by bounded induction in $\RCAs + \Tot(F_k)$ $\rank_{i}(x)$ for each state $x$ as follows:
\begin{itemize}
 \item for any $x\in \N$, $\rank_{i}(x)\ge 0$,
 \item $\rank_{i}(x)\ge n+1$ if there exists $y\in \bp{x_0, \dots, x_m}$ such that $d_{T_{<}}(x,y)\ge F_{i}(x)$ and $\rank_{i}(y)\ge n$,
 \item $\rank_{i}(x)=n$ if $\rank_{i}(x)\ge n$ and $\rank_{i}(x)\not\ge n+1$,
 \item $\rank_{i}(x) \leq m$.
\end{itemize}
Now, we put $T_{i}$ for $i\le k$ as follows:
\begin{align*}
 x T_{i} y\leftrightarrow x T_{<} y \wedge i=\min\{j\le k: \rank_{j}(x)=\rank_{j}(y)\}.
\end{align*}
Note that by definition, $T_{i}$ is transitive, and if $x T_{<} y$ then $\rank_{i}(x)\ge \rank_{i}(y)$ for any $i\le k$.
Since $R$ is deterministic, if $x T_{<} y$, $x T_{<} z$, and $\rank_{i}(y)>\rank_{i}(z)$, then $y T_{<} z$.

Now, for the sake of contradiction, we assume that $\langle x_{n}: n \le m \rangle$ is a $T_{i}$-sequence such that $m> F_{0}(x_{0})=x_{0}+1$.
Then, $\rank_{i}(x_{0})=\dots=\rank_{j}(x_{m})$.
If $i=0$, this is impossible since $d_{T_{<}}(x_{0},x_{m})\ge m>F_{0}(x_{0})$, which means $\rank_{0}(x_{0})>\rank_{0}(x_{m})$.
If $i>0$, then, $\rank_{i-1}(x_{0})>\rank_{i-1}(x_{1})>\dots>\rank_{i-1}(x_{m})$, and hence $\rank_{i-1}(x_{0})-x_{0}-1>\rank_{i-1}(x_{0})-m\ge \rank_{i-1}(x_{m})$.
By the definition of $\rank_{i-1}$, there exists $\langle y_{n}: n\le x_{0}+1 \rangle$ such that $x_{0}=y_{0}$, $d_{T_{<}}(y_{n},y_{n+1})\ge F_{i-1}(y_{n})$ and $\rank_{i-1}(y_{n})=\rank_{i-1}(x_{0})-n$. Recall that $x_0 = y_0 <y_1  <\dots <y_m$ since $T_{<}$ is an increasing relation. Moreover $d_{T_{<}}(y_{n},y_{n+1})\ge F_{i-1}(y_{n})$ implies that $y_{n+1} \ge F_{i-1}(y_n)$, since there exists a decreasing sequence composed of $F_{i-1}(y_n)$ from $y_{n+1}$.  Hence we have $d_{T_{<}}(y_{0},y_{x_{0}+1})\ge F_{i-1}(y_{x_{0}})\ge F_{i-1}(F_{i-1}(y_{x_{0}-1}))\ge\dots\ge F_{i-1}^{(x_{0}+1)}(y_{0})=F_{i-1}^{(x_{0}+1)}(x_{0})$. Since $\rank_{i-1}(y_{x_{0}+1})=\rank_{i-1}(x_{0})-x_{0}-1>\rank_{i-1}(x_{m})$, we have $y_{x_0 +1} \neq x_m$, therefore $y_{x_{0}+1} T_{<} x_{m}$.
Thus $d_{T_{<}}(x_{0},x_{m})>d_{T_{<}}(x_{0},y_{x_{0}+1})\ge F_{i-1}^{(x_{0}+1)}(x_{0})=F_{i}(x_{0})$.
This means $\rank_{i}(x_{0})>\rank_{i}(x_{m})$, which is a contradiction.
\end{proof}

The relations $T_i$ provided by the previous proof are not computable. We wondered how many relations would we need in order to have computable witnesses, but by now we do not know. 

Another question we may ask is: can we generalize this result for non-deterministic  transition relations? A partial answer to this question is the following: in the general case we need to use $H$-bounds instead of bounds.

\begin{thm}[$\RCAs + \Tot(F_k)$]\label{thm-sharp1}
For any transition relation $R\subseteq\N^{2}$, $R$ is bounded by $F_{k}$ only if there exists $T_{0},\dots, T_{k+1}\subseteq \N^{2}$ such that $R^{+}\subseteq T_{0}\cup\dots\cup T_{k+1}$ and each $T_{i}$ is H-bounded by $F_{0}$.
\end{thm}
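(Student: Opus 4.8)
The plan is to adapt the proof of Theorem~\ref{thm-sharp2} to a non-deterministic $R$, tracking precisely where determinism was used and showing that relaxing bounds to $H$-bounds is exactly what the argument costs. First I would reuse the value-comparison split $R^{+}=T_{<}\cup T_{>}$ from that proof: one of the two halves is monotone for the natural ordering of $\N$, hence trivially $H$-bounded by the identity and so by $F_{0}$, which reduces the task to covering the other half --- call it $T_{<}$ --- by $k+1$ relations $T_{0},\dots,T_{k}$ each $H$-bounded by $F_{0}$. I would carry over the auxiliary quantity $d_{T_{<}}(x,y)$ (the length of a longest $T_{<}$-chain from $x$ to $y$), the rank functions $\rank_{0},\dots,\rank_{k}$, and the definition $x\,T_{i}\,y \iff x\,T_{<}\,y \wedge i=\min\{j\le k:\rank_{j}(x)=\rank_{j}(y)\}$, together with the structural facts that $\rank_{j}$ is non-increasing along $T_{<}$ and that each $T_{i}$ is, as far as possible, controlled by $\rank_{i}$.

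The conceptual heart is the single step of Theorem~\ref{thm-sharp2} that reads \vgt{since $R$ is deterministic, if $x\,T_{<}\,y$, $x\,T_{<}\,z$ and $\rank_{i}(y)>\rank_{i}(z)$ then $y\,T_{<}\,z$}: determinism entered there only to force two states reachable from a common $x$ to be $T_{<}$-comparable, i.e.\ to make the reachable set a chain. For an $H$-bound I only need to refute a \emph{transitive} decreasing $T_{i}$-sequence $\langle x_{n}:n\le m\rangle$ with $m>F_{0}(x_{0})$, and in such a sequence all members are by hypothesis pairwise $T_{<}$-comparable. My plan is therefore to run the contradiction entirely inside the $T_{<}$-interval between $x_{0}$ and $x_{m}$: the strict drops of $\rank_{i-1}$ forced along the given chain must be realized by long $T_{<}$-sub-chains sitting inside this interval, and composing $x_{0}+1$ many $F_{i-1}$-jumps yields $d_{T_{<}}(x_{0},x_{m})\ge F_{i-1}^{(x_{0}+1)}(x_{0})=F_{i}(x_{0})$, contradicting $\rank_{i}(x_{0})=\rank_{i}(x_{m})$. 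Losing the total comparability of \emph{all} reachable states is exactly why the conclusion weakens: for a general decreasing $T_{i}$-sequence two co-reachable but incomparable states could prolong it, so the $T_{i}$ need no longer be transitive and only their transitive (hence $H$-)sequences stay short.

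I expect the main obstacle to be twofold and genuinely delicate. Definitionally, in Theorem~\ref{thm-sharp2} determinism made $R^{+}$ a $\Delta^{0}_{1}$ set with finite, effectively computable reachable sets, which justified $d_{T_{<}}$ and the bounded recursion defining $\rank_{i}$; for a possibly infinitely branching $R$ the reachable sets can be infinite and $R^{+}$ is only $\Sigma^{0}_{1}$, so the $T_{i}$ cannot be carved out as subsets of $R^{+}$ but must be produced as $\Delta^{0}_{1}$ relations that merely \emph{cover} $R^{+}$ (the inclusion $R^{+}\subseteq T_{0}\cup\dots\cup T_{k+1}$ being proved rather than built in). Combinatorially, the witnesses produced by unfolding $\rank_{i-1}$ need not be comparable to the points of the given chain, which is precisely what determinism had guaranteed, so I must instead confine every relevant witness to the interval between $x_{0}$ and $x_{m}$. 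My intended handle on both points is that a $T_{<}$-chain is strictly increasing in value, so its members lie strictly between its endpoints, while $\Tot(F_{k})$ bounds the length of every $R$-computation; together these should keep the searches bounded and let me define the $T_{i}$ within $\RCAs+\Tot(F_{k})$. Checking that this value-bounded, interval-confined reading of $R^{+}$ still validates rank monotonicity and the jump-composition step --- and really captures the quantities the contradiction needs --- is where the work lies, and it is exactly here that restricting to transitive, i.e.\ $H$-bounded, sequences is indispensable.
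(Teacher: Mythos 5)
Your proposal keeps the rank machinery of Theorem~\ref{thm-sharp2} and tries to repair the single step that invokes determinism, but that repair does not go through, and the paper's own proof in fact abandons the ranks entirely. The deterministic argument needs determinism at the very end: after unfolding $\rank_{i-1}(x_{0})\ge x_{0}+2$ one obtains witnesses $y_{0}=x_{0},y_{1},\dots,y_{x_{0}+1}$ with $d_{T_{<}}(y_{n},y_{n+1})\ge F_{i-1}(y_{n})$, and one must then conclude $y_{x_{0}+1}\,T_{<}\,x_{m}$ in order to convert the long chain from $x_{0}$ to $y_{x_{0}+1}$ into a lower bound on $d_{T_{<}}(x_{0},x_{m})$ (equivalently, into $\rank_{i}(x_{0})>\rank_{i}(x_{m})$). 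Those witnesses are produced by an existential in the definition of $\rank_{i-1}$; they are arbitrary states reachable from $x_{0}$ and have no reason to be $T_{<}$-comparable with $x_{m}$. Transitivity of the given $T_{i}$-sequence gives pairwise comparability of the $x_{n}$ \emph{only}, not of the $x_{n}$ with the rank witnesses, and your proposed handle --- confining witnesses to the numerical interval $(x_{0},x_{m})$ --- does not help: lying numerically between the endpoints does not make a state $T_{<}$-related to $x_{m}$. Nor can you replace the witnesses by the $x_{n}$ themselves: a unit drop $\rank_{i-1}(x_{n})>\rank_{i-1}(x_{n+1})$ does not certify a long $T_{<}$-chain between those two particular points, so the claim that ``the strict drops of $\rank_{i-1}$ along the given chain must be realized by long sub-chains sitting inside this interval'' is false. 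A secondary but real problem is that $\rank_{i}$ itself is defined in Theorem~\ref{thm-sharp2} by a recursion bounded by the (finite) reachable set; for an infinitely branching $R$ this definition is no longer available in $\RCAs+\Tot(F_{k})$.

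The paper's proof avoids all of this by choosing a colouring that depends only on the pair itself: $x\,T_{i}\,y\leftrightarrow x\,T_{<}\,y\wedge i=\min\{j: d_{T_{<}}(x,y)\le F_{j}(x)\}$. Then a transitive $T_{i}$-sequence $\langle x_{n}:n\le m\rangle$ with $m>x_{0}+1$ yields the contradiction directly: each consecutive pair gives $d_{T_{<}}(x_{n},x_{n+1})>F_{i-1}(x_{n})$, hence $x_{n+1}>F_{i-1}(x_{n})$ (a $T_{<}$-chain is strictly increasing in value), so $d_{T_{<}}(x_{0},x_{m})>F_{i-1}^{(m)}(x_{0})\ge F_{i}(x_{0})$, while homogeneity gives $x_{0}\,T_{i}\,x_{m}$ and therefore $d_{T_{<}}(x_{0},x_{m})\le F_{i}(x_{0})$. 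This is exactly the information your final computation needs, but it is built into the definition of $T_{i}$ rather than extracted from ranks; it is also why the conclusion is only an $H$-bound, since the cap $d_{T_{<}}(x_{0},x_{m})\le F_{i}(x_{0})$ is only available when the endpoints themselves are $T_{i}$-related. Your diagnosis of where determinism enters, and your observation that for non-deterministic $R$ the relation $R^{+}$ is only $\Sigma^{0}_{1}$ so the $T_{i}$ should be arranged to cover rather than partition it, are both sound; but the core combinatorial step of your plan is the one that fails.
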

\begin{proof}
Let $R\subseteq \N^{2}$ be a transition relation which is bounded by $F_{k}$.
Define $T_{<}$ and $T_{>}$ and $d_{T_{<}}$ as in the proof of Theorem \ref{thm-sharp2}.
Trivially, $T_{<}$ is H-bounded by $F_{k}$, $T_{>}$ is bounded by $F_{0}$, and $R^{+}=T_{<}\cup T_{>}$. Now we define $T_{i}$ for each $i\le k$ as
\begin{align*}
 x T_{i} y\leftrightarrow x T_{<} y \wedge i=\min \{j: d_{T_{<}}(x,y)\le F_{j}(x)\}.
\end{align*}
Then, we have $T_{0}\cup\dots\cup T_{k}=T_{<}$.
Thus, we only need to check that each $T_{i}$ is $F_{0}$-H-bounded.
Assume that $\langle x_{n}: n \le m \rangle$ is a $T_{i}$-homogeneous sequence such that $m> F_{0}(x_{0})=x_{0}+1$.
If $i=0$, this is impossible since $d_{T_{<}}(x_{0},x_{m})\ge m>F_{0}(x_{0})$.
If $i>0$, then, $d_{T_{<}}(x_{n},x_{n+1})>F_{i-1}(x_{n})$, and hence $x_{n+1}>F_{i-1}(x_{n})$ because any $T_i$-homogeneous decreasing sequence has decreasing values, and there is some $T_i$-homogeneous decreasing sequence from $x_{n+1}$ of length $F_{i-1}(x_{n})$.
Thus,
\[d_{T_{<}}(x_{0},x_{m})>d_{T_{<}}(x_{m-1},x_{m})>F_{i-1}(x_{m-1})>F_{i-1}(F_{i-1}(x_{m-2}))>\dots>F_{i-1}^{(m)}(x_{0})>F_{i}(x_{0}),\]
which is a contradiction because the definition of $T_i$-homogeneous sequence implies $x_0 T_i x_m$, therefore $d_{T_<}(x_0, x_m) \leq F_i(x_0)$.
\end{proof}

\section{Iterated version}\label{Section: iteratedversion}
Here we want consider iterated applications of termination theorems. For simplicity, we only consider $2$-disjunctive case with a linear $F_{0}$-bound.

For given $R\subseteq S^{2}$ and $S'\subseteq S$, $S'$ is said to be linearly $R$-connected if for any $x,y\in S'$, either $x R^{+} y$ or $y R^{+} x$. For instance, the set of elements of any $R$-sequence is $R$-linearly connected. Trivially, $R\subseteq S^{2}$ is well-founded if and only if for any $S'\subseteq S$ which is $R$-linearly connected, $R\cap S'^{2}$ is well-founded.

\begin{defi}
\begin{itemize}
 \item A binary relation $R\subseteq S^{2}$ is said to be $0$-depth linearly bounded if it is $F_{0}$-bounded.
 \item A binary relation $R\subseteq S^{2}$ is said to be $n+1$-depth linearly bounded if for any $S'\subseteq S$ which is $R$-linearly connected, there exist $T^{S'}_{1}, T^{S'}_{2}$ such that $T^{S'}_{1}\cup T^{S'}_{2}\supseteq R\cap S'^{2}$, and both $T^{S'}_{1}$ and $T^{S'}_{2}$ are $n$-depth linearly bounded.
\end{itemize}
\end{defi}

We can easily check that if $R$ is $k$-disjunctively linearly bounded then it is $k$-depth linearly bounded, but the converse does not hold. 

We define the notion $n$-depth linearly $H$-bounded similarly, but we replace $R$-linearly connected subset with $R$-homogeneous subset, where a set $S'\subseteq S$ is $R$-homogeneous if for any $x,y\in S'$, either $x R y$ or $y R x$. For instance any $R$-homogeneous sequence is a $R$-homogeneous set.

\begin{defi}
\begin{itemize}
 \item A binary relation $R\subseteq S^{2}$ is said to be $0$-depth linearly H-bounded if it is $F_{0}$-H-bounded.
 \item A binary relation $R\subseteq S^{2}$ is said to be $n+1$-depth linearly H-bounded if for any $S'\subseteq S$ which is $R$-homogeneous, there exist $T^{S'}_{1}, T^{S'}_{2}$ such that $T^{S'}_{1}\cup T^{S'}_{2}\supseteq R\cap S'^{2}$, and both $T^{S'}_{1}$ and $T^{S'}_{2}$ are $n$-depth linearly H-bounded.
\end{itemize}
\end{defi}

Again, we can easily check that if $R$ is $k$-disjunctively linearly bounded then it is $k$-depth linearly bounded, but the converse does not hold.

Now, applying the termination theorem $n$-times, we have the following.
\begin{prop}[$\RCAo$]\label{prop-dlb}
A transition relation $R\subseteq S^{2}$ is well-founded if it is $n$-depth linearly bounded.
\end{prop}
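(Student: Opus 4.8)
The plan is to argue by induction on $n$, treating $n$ as a fixed (standard) parameter so that each instance is proved in $\RCAo$ separately; this is the same device the paper uses for its $\forall k$-statements, and it is what lets one freely invoke $\Tot(F_{n})$ together with the bounded and relativized consequences of the Termination Theorem at each fixed level. Throughout I would lean on the observation recorded immediately before the statement: $R$ is well-founded if and only if $R\cap S'^{2}$ is well-founded for every $R$-linearly connected $S'\subseteq S$. This reduces everything to controlling linearly connected restrictions, which is exactly the shape the depth definition is designed to exploit.

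For the base case $n=0$ the relation $R$ is $F_{0}$-bounded, so every $R$-decreasing sequence starting at $a$ has length at most $F_{0}(a)=a+1$; in particular there can be no infinite $R$-decreasing sequence, so $R$ is well-founded. For the inductive step I assume the claim for $n$ and let $R$ be $(n+1)$-depth linearly bounded. By the observation it suffices to fix an $R$-linearly connected $S'$ and show that $R\cap S'^{2}$ is well-founded. The definition supplies $T^{S'}_{1},T^{S'}_{2}$, both $n$-depth linearly bounded, with $T^{S'}_{1}\cup T^{S'}_{2}\supseteq R\cap S'^{2}$; by the induction hypothesis each $T^{S'}_{i}$ is well-founded. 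Thus $T^{S'}_{1}\cup T^{S'}_{2}$ is a disjunctively well-founded relation covering $R\cap S'^{2}$, and applying the Termination Theorem for two relations (Theorem \ref{theorem: PRT-WRT}, i.e.\ $\WRT^{2}_{2}$) to $R\cap S'^{2}$ yields that $R\cap S'^{2}$ is well-founded. Peeling the definition down level by level to the base case is precisely what is meant by ``applying the termination theorem $n$-times.''

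The main obstacle I anticipate is the interface with the Termination Theorem. Its hypothesis (Theorem \ref{theorem: PRT-WRT}) requires the disjunctively well-founded invariant to contain the \emph{transitive closure} $(R\cap S'^{2})^{+}$, whereas the depth definition only supplies a cover of $R\cap S'^{2}$ itself. I expect the linear connectedness of $S'$ to be used precisely here: either one replaces $R\cap S'^{2}$ by $R^{+}\cap S'^{2}$ before invoking the definition at $S'$, or one checks that the two witnessing relations may be chosen transitive so that their union already absorbs the relevant composites. Getting this step right — rather than the outer induction, which is routine — is where the real work lies.

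A secondary point worth flagging is that $\WRT^{2}_{2}=\ADS$ is not provable in $\RCAo$ in general (Proposition \ref{Proposition: WRT-CAC-ADS} places it strictly above $\RCAo$). This is harmless here only because the induction is performed for each fixed standard $n$ over bounded, effectively presented data, for which the instance of the Termination Theorem actually needed is $\RCAo$-provable; one should not read the statement as giving a single uniform $\RCAo$-proof for all $n$ at once. I would therefore be careful to verify, at each fixed level, both the transitive-closure coverage and the legitimacy of the Termination-Theorem instance being applied, before declaring the inductive step complete.
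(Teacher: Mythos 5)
Your overall strategy --- external induction on $n$, applying the two-relation Termination Theorem at each level to the cover supplied by the depth definition --- is the same as the paper's; the paper merely runs it in contrapositive form, following a single infinite $R$-descending sequence down through the $n$ levels (take $S'$ to be its range, conclude that one of $T^{S'}_{0},T^{S'}_{1}$ is ill-founded, repeat until an ill-founded $F_{0}$-bounded relation appears). The step you explicitly leave open, however, is exactly the one that must be closed, and it cannot be left as a remark about where the work ``lies.'' The paper resolves the transitive-closure mismatch by using the cover in the form $T^{S'}_{0}\cup T^{S'}_{1}\supseteq R^{+}\cap S'^{2}$ (this is literally what its proof writes): since $S'$ is the range of an $R$-descending sequence $a_{0},a_{1},\dots$, every pair with $i<j$ satisfies $a_{j}R^{+}a_{i}$, hence $(a_{j},a_{i})\in T^{S'}_{0}\cup T^{S'}_{1}$, so the sequence is a \emph{transitive} descending sequence for the union and the Weak $H$-closure Theorem (Theorem \ref{theorem: PRT-WRT}) yields an infinite descending sequence for one of the two pieces. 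With a cover of $R\cap S'^{2}$ only, the sequence need not be transitive for the union and the argument genuinely fails (a union of two well-founded relations can carry infinite non-transitive descending sequences). So you must either read the definition as covering $R^{+}\cap S'^{2}$, as the paper's own proof does, or pass to $R^{+}$ before instantiating it; your first suggested repair is the intended one.

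Two smaller points. First, your resolution of the $\RCAo$-versus-$\WRT^{2}_{2}$ worry is not adequate as stated: at intermediate levels the relations $T^{S'}_{i}$ come with no bound, only with the property of being $(n-1)$-depth linearly bounded, so the instance of the Termination Theorem invoked there is not visibly one of the $\RCAo$-provable bounded forms. The paper does not address this in Proposition \ref{prop-dlb} either; what actually secures the result at the stated base theory is the finitary $WW_{k}$-bound proved immediately afterwards. Second, both your version and the paper's need the range of the witnessing descending sequence to exist as a set before the depth definition can be instantiated at it; this deserves at least a remark, since ranges of arbitrary sequences are not available in $\RCAo$.
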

\begin{proof}
Assume that $R$ is not well-founded then $R^+$ is not well-founded, let $S'$ be the range of the infinite transitive $R^+$-sequence. By hypothesis there exists $T^{S'}_0$ and $T^{S'}_1$ such that they are $(n-1)$-depth linearly bounded and $T^{S'}_0 \cup T^{S'}_1 \supseteq R^+ \cap {S'}^2$. Then by the Termination Theorem there exists $i < 2$ such that $T^{S'}_1$ is not well-founded. By applying this argument $n$-many times we obtain $T^*$ such that it is $F_0$-bounded and it is not well-founded. This is a contradiction. 
\end{proof}

Similarly, by $n$-times applications of H-closure theorem, we have the following.
\begin{prop}[$\RCAo$]\label{prop-dlHb}
A transition relation $R\subseteq S^{2}$ is well-founded if it is $n$-depth linearly H-bounded.
\end{prop}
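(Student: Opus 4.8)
The plan is to mirror the proof of Proposition~\ref{prop-dlb}, but to replace each bare pigeonhole step by a step that combines the Termination Theorem with Proposition~\ref{proposition: wfHwf}, so that the mere $H$-boundedness of the two pieces can nonetheless be upgraded to genuine well-foundedness. First I would argue by contradiction: assume $R$ is not well-founded. Then $R^{+}$ is not well-founded either, and since $R^{+}$ is transitive, Proposition~\ref{proposition: wfHwf}.2 read contrapositively gives that $R^{+}$ is not $H$-well-founded; hence there is an infinite transitive decreasing $R^{+}$-sequence $\langle a_{j}: j\in\N\rangle$. Let $S'=\{a_{j}: j\in\N\}$ be its range. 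Just as in Proposition~\ref{prop-dlb} the range of a transitive descending witness is $R$-linearly connected, here I would arrange that $S'$ is $R$-homogeneous in the sense of the definition preceding the statement, so that $S'$ may legitimately be fed into the hypothesis.

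Next I would apply the hypothesis that $R$ is $n$-depth linearly $H$-bounded to $S'$. This yields relations $T^{S'}_{0},T^{S'}_{1}$, both $(n-1)$-depth linearly $H$-bounded, with $T^{S'}_{0}\cup T^{S'}_{1}\supseteq R^{+}\cap S'^{2}$, exactly as in the companion proof. The relation $R^{+}\cap S'^{2}$ is transitive and, carrying the sequence $\langle a_{j}\rangle$, it is not well-founded; being transitive it equals its own transitive closure, so its transitive closure is covered by $T^{S'}_{0}\cup T^{S'}_{1}$. By the contrapositive of the Termination Theorem for two relations (Theorem~\ref{theorem: PRT-WRT}.1 with $k=2$, i.e.\ Theorem~\ref{Theorem: Podelski}), at least one of $T^{S'}_{0},T^{S'}_{1}$ is not well-founded. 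Iterating the reduction $n$ times—at each stage $m$ passing from a non-well-founded transitive restriction to one of its two $(m-1)$-depth pieces $T_{i}$, and re-extracting a transitive descending witness by taking $T_{i}^{+}$ and invoking Proposition~\ref{proposition: wfHwf}.2 again—produces a $0$-depth linearly $H$-bounded relation $T^{*}$ that is still not well-founded.

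The last step closes the contradiction, and this is where the conclusion is genuinely \emph{well-foundedness} rather than $H$-well-foundedness. Because $T^{*}$ is obtained as a restriction of a transitive relation to the range of a transitive decreasing sequence, $T^{*}$ is itself transitive; being $0$-depth linearly $H$-bounded it is $F_{0}$-$H$-bounded, hence $H$-well-founded; and by Proposition~\ref{proposition: wfHwf}.2 a transitive $H$-well-founded relation \emph{is} well-founded. This contradicts the non-well-foundedness of $T^{*}$, and the contradiction forces $R$ to be well-founded.

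I expect the main obstacle to lie in the first two steps, and to be exactly the feature distinguishing the $H$-version from Proposition~\ref{prop-dlb}: the definition of $n$-depth linearly $H$-bounded quantifies over $R$-\emph{homogeneous} sets, a notion phrased through $R$ itself, whereas the descending witness supplied by Proposition~\ref{proposition: wfHwf}.2 and the covering $T^{S'}_{0}\cup T^{S'}_{1}\supseteq R^{+}\cap S'^{2}$ demanded by the Termination Theorem are phrased through $R^{+}$. The delicate point is therefore to realize the transitive descending witness on a set $S'$ that is homogeneous for $R$ and to check, as in Proposition~\ref{prop-dlb}, that the two covering relations there absorb all of $R^{+}\cap S'^{2}$; once this bridge between $R$ and $R^{+}$ is secured at every level, the transitivity carried along the iteration makes Proposition~\ref{proposition: wfHwf}.2 applicable at the base and delivers well-foundedness.
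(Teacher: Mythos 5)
The paper disposes of this proposition in one line: it follows ``by $n$-times applications of [the] $H$-closure theorem'', in parallel with the displayed proof of Proposition~\ref{prop-dlb}. Your proposal keeps the same contradiction-and-recursion skeleton but iterates the \emph{Termination Theorem} instead, bridging back and forth between well-foundedness and $H$-well-foundedness via transitivity (Proposition~\ref{proposition: wfHwf}.2). That choice of invariant is where the argument breaks. At each stage the Termination Theorem only tells you that the selected piece $T_i$ is \emph{not well-founded}; to feed $T_i$ back into the definition of $(m-1)$-depth linear $H$-boundedness you need a $T_i$-\emph{homogeneous} set, i.e.\ the range of an infinite \emph{transitive} decreasing $T_i$-sequence, which is the assertion that $T_i$ is not $H$-well-founded --- strictly stronger for a non-transitive $T_i$ (e.g.\ $\{(n+1,n):n\in\N\}$ is not well-founded yet is $H$-well-founded). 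Passing to $T_i^{+}$, as you suggest, does not repair this: the range of a transitive decreasing $T_i^{+}$-sequence is $T_i^{+}$-homogeneous, not $T_i$-homogeneous, so the depth hypothesis still cannot be invoked. You correctly flag this as ``the delicate point'' but leave it unresolved, and along your route it is not resolvable. The same defect resurfaces at the base: you assert that $T^{*}$ is transitive in order to apply Proposition~\ref{proposition: wfHwf}.2, but the pieces supplied by the definition are arbitrary relations equipped only with $H$-bounds, and nothing forces the last one to be transitive.

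The repair, and what the paper's one-line proof is pointing at, is to carry the invariant ``admits an infinite transitive decreasing sequence'' (non-$H$-well-foundedness) through the whole recursion and to use the $H$-closure theorem rather than the Termination Theorem at each step: if $R^{+}\cap S'^{2}\subseteq T^{S'}_{1}\cup T^{S'}_{2}$ and the left-hand side carries an infinite transitive decreasing sequence, then so does $T^{S'}_{1}\cup T^{S'}_{2}$, hence so does one of $T^{S'}_{1},T^{S'}_{2}$; the range of that sequence is automatically homogeneous for that piece, which is precisely what the next level of the definition requires, and at depth $0$ an infinite transitive decreasing sequence contradicts the $F_{0}$-$H$-bound directly, with no transitivity assumption on $T^{*}$. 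The transitivity bridge of Proposition~\ref{proposition: wfHwf}.2 is needed only once, at the very first step, to convert non-well-foundedness of $R$ into non-$H$-well-foundedness of $R^{+}$.
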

We want to calculate a bound for $R$ in the above propositions.
For this, we use several results from proof-theory and reverse mathematics.

In \cite{Bovykin} Bovykin and Andreas Weiermann introduced a notion of density which is weaker than the original one introduced by Paris \cite{Paris}: thanks to that they introduced the iterated version of $\PH^2_2$\footnote{$\PH^2_2$ id the lightface version of $\PHs^2_2$ for sets $X$ which are intervals.}. In order to extract $H$-bounds for the iterated version of Podelski and Rybalchenko's Termination Theorem, in Subsection \ref{subsection: It2} we deal with their definition. In  Subsection \ref{subsection: It1} we define a weak version which arises from $\WPH^2_2$\footnote{$\PH^2_2$ id the lightface version of $\WPHs^2_2$ for sets $X$ which are intervals.} and it is useful to extract bounds.

\subsection{$k$-depth linearly bounded}\label{subsection: It1}
For this case, a bound is given by the iterated version of $\WPH^{2}_{2}$.

\begin{defi}
For a finite set $X\subseteq\N$, we define the notion of \textit{$m$-w-density} (for $m\in\N$) as follows.
\begin{itemize}
  \item A finite set $X$ is said to be \textit{$0$-w-dense} if $|X|>\min X$.
  \item A finite set $X$ is said to be \textit{$m+1$-w-dense} if for any (coloring) function $P:[X]^2\to 2$, there exists a subset $Y\subseteq X$ such that $Y$ is $m$-w-dense and $Y$ is weakly $P$-homogeneous.
\end{itemize}
Note that ``$X$ is $m$-w-dense'' can be expressed by a $\Sigma_0^{0}$-formula.
Then, $m\WPH^{2}_{2}$ ($m$-th iterated $\WPH^{2}_{2}$) is the following assertion:
\begin{itemize}
 \item[] for any $a$ there exists a $m$-w-dense set $X$ such that $\min X>a$.
\end{itemize}
We put $WW_{m}:\N\to \N$ as $WW_{m}(x):=\min \{y: (x,y]$ is $m$-w-dense$\}$.
\end{defi}

\begin{thm}[$\RCAo$]
A transition relation $R\subseteq S^{2}$ is $WW_{k}$-bounded if it is $k$-depth linearly bounded.
\end{thm}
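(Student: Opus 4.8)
The plan is to argue by induction on the depth $k$, mirroring the recursive definition of $m$-w-density: one level of that definition will be consumed by one application of the decomposition coming from $(k+1)$-depth linear boundedness, exactly as one application of the Termination Theorem is consumed per level in the proof of Proposition \ref{prop-dlb}. Concretely, I would establish the contrapositive form: if $R$ is $k$-depth linearly bounded, then no $R$-decreasing sequence $\langle a_{0},\dots,a_{l-1}\rangle$ has length $l > WW_{k}(a_{0})$. I would also first record the monotonicity lemma that $(a_{0},y]$ being $m$-w-dense and $y'\ge y$ imply $(a_{0},y']$ is $m$-w-dense, which makes $WW_{m}$ well defined and lets me pass freely to larger left-anchored intervals.

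For the base case $k=0$, the relation $R$ is $F_{0}$-bounded, so every $R$-decreasing sequence from $a_{0}$ has length $\le F_{0}(a_{0})=a_{0}+1$. A direct computation shows that $(a_{0},y]$ (which has minimum $a_{0}+1$ and cardinality $y-a_{0}$) is $0$-w-dense exactly when $y\ge 2a_{0}+2$, so $WW_{0}(a_{0})=2a_{0}+2\ge a_{0}+1$; hence $F_{0}$-boundedness already yields $WW_{0}$-boundedness. For the inductive step, suppose the claim holds for $k$ and let $R$ be $(k+1)$-depth linearly bounded with an $R$-decreasing sequence $\sigma=\langle a_{0},\dots,a_{l-1}\rangle$. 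The value set $S'=\{a_{0},\dots,a_{l-1}\}$ is $R$-linearly connected (consecutive elements are $R$-related, hence all pairs are $R^{+}$-comparable), so the hypothesis yields $T_{1},T_{2}$, both $k$-depth linearly bounded, with $T_{1}\cup T_{2}$ covering $R\cap S'^{2}$, indeed the $R^{+}$-relation restricted to $S'$. I would then encode the positions of $\sigma$ into an interval $(a_{0},y]$ of suitable length and transport the two-coloring ``which $T_{j}$ realizes the $R^{+}$-step'' to a coloring $P$ on the pairs of $(a_{0},y]$ in such a way that the index order along $\sigma$ matches the natural order on the interval. Assuming $l > WW_{k+1}(a_{0})$, this interval is $(k+1)$-w-dense, so $P$ admits a $k$-w-dense weakly $P$-homogeneous subset $Y$ in some color $j$; pulling $Y$ back through the encoding produces a $T_{j}$-decreasing subsequence whose length equals $|Y|$. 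Since $Y$ is $k$-w-dense, this subsequence is longer than $WW_{k}$ evaluated at its starting value, contradicting the induction hypothesis applied to the $k$-depth linearly bounded relation $T_{j}$.

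The main obstacle is the positional encoding together with the accompanying length bookkeeping. Weak homogeneity is defined through the increasing enumeration of a subset, so I must guarantee that the order used by the density extraction coincides with the $R^{+}$-order along $\sigma$; this is precisely why I would encode positions, rather than raw values, into the interval, and why I need the decomposition to cover the transitive closure on the linearly connected set $S'$ and not merely the one-step relation. Equally delicate is matching the minimum of the extracted $k$-w-dense set $Y$, which lives inside $(a_{0},y]$, against the \emph{actual} starting value of the corresponding $T_{j}$-subsequence that the induction hypothesis constrains; closing this gap requires the monotonicity of $WW_{k}$ together with a careful choice of the interval's left endpoint and length so that the density of $Y$ genuinely forces a violation of the $WW_{k}$-bound. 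Verifying that the $WW_{k}$-recursion produced in this way is exactly the one encoded by the $m$-w-density definition is where the bulk of the combinatorial work lies; once that dictionary between ``one decomposition step'' and ``one density step'' is pinned down, everything else is the routine transcription of the argument of Proposition \ref{prop-dlb} into the finite, length-counting setting.
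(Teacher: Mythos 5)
Your overall strategy --- one decomposition step per level of $w$-density, colouring pairs of positions according to which $T_{j}$ realizes the $R^{+}$-step, and extracting a weakly homogeneous subset one density level down --- is exactly the strategy of the paper's proof, and your base-case computation ($WW_{0}(a_{0})=2a_{0}+2\ge F_{0}(a_{0})$) is fine. The problem is the step that closes your induction. You write: ``Since $Y$ is $k$-w-dense, this subsequence is longer than $WW_{k}$ evaluated at its starting value.'' This does not follow. $k$-w-density of the extracted set $Y$ is a structural, Ramsey-type property of a (generally non-interval) set of \emph{positions}; it gives no lower bound on $|Y|$ in terms of $WW_{k}$ of anything, let alone of the \emph{value} $a_{\min Y}$ at which the extracted subsequence starts. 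Already for $k=0$ the implication fails: $0$-w-density yields only $|Y|>\min Y$, whereas by your own computation $WW_{0}(x)=2x+2$, so $|Y|>WW_{0}(\cdot)$ is not available; for $k\ge 1$ a $k$-w-dense set can have small cardinality with large, spread-out elements. Since $WW_{k}$ is defined only through intervals $(x,y]$, no choice of the original interval's endpoints and no monotonicity lemma will turn the extracted $Y$ back into an interval, so the induction hypothesis ``$T_{j}$ is $WW_{k}$-bounded'' is too weak to be invoked where you invoke it.

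The missing idea is that the induction must be carried on a stronger statement: a $k$-depth linearly bounded relation admits no decreasing sequence whose \emph{index set} is $k$-w-dense (suitably anchored above the starting point); the theorem as stated is the special case in which that index set is an interval. Equivalently, do what the paper does: do not stop after one extraction, but unroll all $k$ levels inside a single argument, carrying the $m$-w-dense position sets $Y_{k}\supseteq Y_{k-1}\supseteq\dots\supseteq Y_{0}$ down to level $0$, where $0$-w-density \emph{is} a pure cardinality statement ($|Y_{0}|>\min Y_{0}$) that can be played off directly against the $F_{0}$-bound of the final relation $T^{*}$. The position-versus-value bookkeeping that you correctly flag (matching $\min Y_{0}$ against the starting value of the $T^{*}$-subsequence) then has to be handled only once, at the bottom level, rather than at every stage of an induction whose hypothesis cannot express it.
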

\begin{proof}
 Assume by contradiction that $R$ is $k$-depth linearly bounded and that there exists a $R$-sequence $X_k=\bp{a_n : n \in m_k}$ such that $m_k > WW_{k}(a_0)$. Put $Y_k = (a_0,m_k]$.
 Since $R$ is $k$-depth linearly bounded and $X_k$ is $R$-connected there exists $T_0$ and $T_1$ such that they are $(k-1)$-depth linear bounded and $T_0\cup T_ 1 \supseteq R^+ \cap X_k$. Define $P:[Y_k]^2 \to 2$ such that $P(\bp{n,m})= i$ for the minimum $i<2$ such that  $a_n T_i a_m$.
 Since $Y_k$ is $k$-w-dense, there exists $Y_{k-1}$ weakly $P$-homogeneous (then $T_i$-connected for some $i <2$) and $(k-1)$-w-dense. Define $X_{k-1} =\bp{a_n : n \in Y_{k-1}}$. By applying the same argument $k-1$-many times we obtain a relation $T^*$ $F_0$-bounded and $Y_0$ weakly $T^*$-homogeneous such that $Y_0$ is $0$-w-dense. Contradiction.
\end{proof}

Now, to know the increasing speed of $WW_{k}$, we use the following several results from proof-theory and reverse mathematics. Recall that the class provable recursive functions of $\WKLo+\CAC$ is exactly the same as the class of primitive recursive functions. On the other hand, we can see the following.

\begin{lem}\label{lemma: mthWPH}
Let $m\in \omega$.
Then, $m\WPH^{2}_{2}$ is provable from $\WKLo+\CAC$, in other words, $WW_{m}$ is a provably recursive function of $\WKLo+\CAC$.
\end{lem}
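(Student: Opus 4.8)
The plan is to prove the lemma by external induction on the standard number $m$, producing for each $m$ a single derivation in $\WKLo+\CAC$. A direct induction on the definition of w-density does not quite work, because weak homogeneity is not inherited by arbitrary subsets: a weakly $P$-homogeneous set constrains only \emph{consecutive} pairs, so an arbitrary $m$-w-dense subset of an infinite weakly homogeneous set need not be weakly homogeneous. I would therefore carry the induction on the following relativized strengthening $(\star_m)$, where a \emph{block} of an infinite set $Z$ means a set of consecutive elements in the increasing enumeration of $Z$: for every infinite $Z$ and every $a$ there is a block $Y$ of $Z$ with $\min Y>a$ that is $m$-w-dense. The decisive fact that makes the block formulation succeed is that a block of a weakly homogeneous set is again weakly homogeneous. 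Taking $Z=\N$ in $(\star_m)$ yields an $m$-w-dense interval with minimum above $a$, which is exactly $m\WPH^2_2$; moreover $m$-w-density is monotone under supersets (a routine induction on $m$, using $|X'|\ge|X|$ and $\min X'\le\min X$ when $X\subseteq X'$), so this also gives the totality of $WW_m$. The base case $(\star_0)$ is provable already in $\RCAo$: given infinite $Z$ and $a$, let $z=\min\{x\in Z:x>a\}$ and take $Y$ to be $z$ together with the next $z$ elements of $Z$; then $Y$ is a block, $\min Y=z>a$, and $|Y|=z+1>\min Y$.

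For the inductive step $(\star_m)\Rightarrow(\star_{m+1})$ I would argue by contradiction, combining compactness with weak Ramsey. Fix infinite $Z$ and $a$, let $z_0<z_1<\dots$ enumerate $\{x\in Z:x>a\}$, and put $B_n=\{z_0,\dots,z_{n-1}\}$. If $(\star_{m+1})$ failed, then no $B_n$ would be $(m+1)$-w-dense, so for each $n$ there is a coloring of $[B_n]^2$ admitting no $m$-w-dense weakly homogeneous subset; call such a coloring \emph{bad}. Since the restriction of a bad coloring is again bad, the bad colorings form an infinite, boundedly branching tree under restriction, and bounded K\"onig's Lemma (equivalent to $\WKLo$) yields a single coloring $P:[\{z_i:i\in\N\}]^2\to 2$ each of whose restrictions $P\rest[B_n]^2$ is bad.

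Because $\CAC$ implies $\WRT^2_2$ (Proposition~\ref{Proposition: WRT-CAC-ADS}), applying $\WRT^2_2$ to $P$ gives an infinite weakly $P$-homogeneous set $H$. Now the induction hypothesis $(\star_m)$ applied to the infinite set $H$ produces a block $Y$ of $H$ with $\min Y>a$ that is $m$-w-dense; as a block of the weakly homogeneous $H$, this $Y$ is weakly $P$-homogeneous. But $Y$ is finite, hence $Y\subseteq B_n$ for some $n$, contradicting that $P\rest[B_n]^2$ is bad. This establishes $(\star_{m+1})$ and completes the induction.

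I expect the main obstacle to be precisely the failure of weak homogeneity to pass to subsets; recognizing that the induction must deliver consecutive blocks rather than arbitrary $m$-w-dense subsets is what unblocks the argument, after which the compactness-plus-weak-Ramsey skeleton is standard. Finally, since the whole argument is carried out in $\WKLo+\CAC$ and $m\WPH^2_2$ is a $\Pi^0_2$ statement, $WW_m$ is a provably recursive function of $\WKLo+\CAC$, and hence primitive recursive by the corollary identifying the provably recursive functions of $\WKLo+\CAC$ with the primitive recursive ones.
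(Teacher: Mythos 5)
Your proof is correct, but it takes a genuinely different and more careful route than the paper's. The paper argues directly: fix a coloring $P$ of $[\N]^2$, apply $\WPH^2_2$ (available from $\CAC$) $m$ times to obtain a nested chain $X^m_0\supseteq X^m_1\supseteq\cdots$ of $1$-large weakly homogeneous sets, and declare that this chain witnesses the $m$-w-density of $X^m_0$. That sketch fixes the coloring \emph{before} the candidate dense set is produced, whereas $m$-w-density quantifies over all colorings of that set, and it makes no use of $\WKLo$ even though the lemma is stated over $\WKLo+\CAC$. Your Paris-style indicator argument supplies exactly what is elided there: the tree of ``bad'' colorings together with bounded K\"onig's Lemma converts the assumption that no finite block above $a$ is $(m+1)$-w-dense into a single infinite coloring, to which $\WRT^2_2$ (from $\CAC$ via Proposition~\ref{Proposition: WRT-CAC-ADS}) and the induction hypothesis can then be applied to reach a contradiction. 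Your other contribution is isolating the obstruction that weak homogeneity is not inherited by arbitrary subsets, and repairing it by strengthening the induction hypothesis so that it returns a \emph{consecutive block} of an arbitrary infinite set; this point is invisible in the paper's sketch but is essential precisely for the weak (consecutive-pairs) version of density, and your base case and monotonicity checks for it are fine. In short, both proofs rest on nesting $m$ applications of weak Ramsey, but yours makes explicit the compactness step and the block discipline that the statement actually requires, and in doing so explains why $\WKLo$ appears in the hypothesis; your concluding reduction from provability of the $\Pi^0_2$ sentence $m\WPH^2_2$ to $WW_m$ being provably recursive agrees with the paper.
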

\begin{proof}
For any natural number $m$, $m\WPH^2_2$ can be proved by $m$ applications of $\WPH^2_2$. In fact, given a coloring $P: [\N]^2 \to 2$, define $X^m_i$ for any $i\leq m$ as follows. Put $X^m_{-1}= \bp{n : n > a}$. Assume we already defined $X^m_i$ and define $X^m_{i+1}$ to be the $1$-large, weakly homogeneous set provided by the application of $\WPH^2_2$ to the restriction of $P$ to $X^m_i$. It is quite easy to verify by induction that $X^{m}_0$ is the witness for $m\WPH^2_2$: it is $m$-w-dense set and $\min X^m_0>a$. In fact $X^0_0$ is $1$-large as required. Now consider a natural number $m$ and $X^{m+1}_0$. Observe that $\ap{X^{m+1}_i : i \leq m+1}$ is the sequence of witnesses needed to prove that $X^{m+1}_0$ is $m+1$-w-dense. 

Since $\CAC$ implies $\WPH^2_2$, the $m$-th iteration of $\WPH^2_2$ is provable from $\WKLo+\CAC$. 
\end{proof}

Combining these, we have a bound for Proposition~\ref{prop-dlb}.
\begin{thm}
A transition relation $R\subseteq S^{2}$ is primitive recursively bounded if it is $n$-depth linearly bounded for some $n\in\omega$.
\end{thm}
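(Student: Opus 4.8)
The plan is to read this off as a corollary of the two results immediately preceding it, with no new combinatorics required. Suppose $R\subseteq S^{2}$ is $n$-depth linearly bounded for some standard $n\in\omega$. First I would invoke the preceding theorem (that any $k$-depth linearly bounded relation is $WW_{k}$-bounded), instantiated at $k=n$, to conclude already within $\RCAo$ that $R$ is $WW_{n}$-bounded; that is, $WW_{n}$ is a bound for $R$ in the sense of Definition~\ref{def:bounds}. Since $\RCAo$ is sound, this is a true statement about the actual relation $R$: every $R$-decreasing sequence starting from $a$ has length at most $WW_{n}(a)$.

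Next I would control the growth of $WW_{n}$. By Lemma~\ref{lemma: mthWPH}, for each standard $m$ the principle $m\WPH^{2}_{2}$ is provable in $\WKLo+\CAC$, so $WW_{m}$ is a provably recursive function of $\WKLo+\CAC$. Applying the earlier corollary — that the provably recursive functions of $\WKLo+\CAC$ are exactly the primitive recursive functions — to $m=n$ yields that $WW_{n}$ is primitive recursive. Hence $R$ is bounded by the primitive recursive function $WW_{n}$, i.e.\ $R$ is primitive recursively bounded. (If one prefers a cleaner majorant, one may note that any function dominating a bound is again a bound, so $R$ is equally bounded by any primitive recursive function dominating $WW_{n}$.)

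The step demanding the most care is the passage from the \emph{level-by-level} results to a single uniform conclusion. Lemma~\ref{lemma: mthWPH} is established by an \emph{external} induction on $m$, unwinding $m$ successive applications of $\WPH^{2}_{2}$ (equivalently of $\CAC$) to derive totality of $WW_{m}$ inside $\WKLo+\CAC$; this is precisely why the hypothesis must read ``for some $n\in\omega$''. Because $n$ is a standard, external natural number, the $n$-fold iteration is genuinely finite, so the corollary may be applied to the single, honestly total function $WW_{n}$ and delivers primitive recursiveness of that one function. Were $n$ permitted to be nonstandard the argument would collapse, as no uniform primitive recursive bound spans all (including nonstandard) depths — indeed the full statement $\forall k\,WW_{k}$ is not primitive recursive. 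I do not anticipate any further obstacle: both ingredients are in place, and the only subtlety is keeping $n$ external throughout.
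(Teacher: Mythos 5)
Your proposal is correct and is exactly the argument the paper intends: the paper gives no explicit proof beyond the phrase ``Combining these,'' meaning precisely the combination of the $WW_k$-bound theorem, Lemma~\ref{lemma: mthWPH}, and the corollary characterizing the provably recursive functions of $\WKLo+\CAC$ as the primitive recursive ones. Your added remark about keeping $n$ external (since Lemma~\ref{lemma: mthWPH} proceeds by external induction on $m$) correctly identifies the one subtlety and matches the paper's reason for phrasing the hypothesis as ``for some $n\in\omega$.''
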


\subsection{$k$-depth linearly H-bounded}\label{subsection: It2}
This case is more complicated. Again, a $H$-bound is given by the iterated version of $\PH^{2}_{2}$. The next definition is the one introduced by Bovykin and Weiermann in \cite{Bovykin}. 

\begin{defi}
For a finite set $X\subseteq\N$, we define the notion of \textit{$m$-density} (for $m\in\N$) as follows.
\begin{itemize}
  \item A finite set $X$ is said to be \textit{$0$-dense} if $|X|>\min X$.
  \item A finite set $X$ is said to be \textit{$m+1$-dense} if for any (coloring) function $P:[X]^2\to 2$, there exists a subset $Y\subseteq X$ such that $Y$ is $m$-dense and $Y$ is $P$-homogeneous.
\end{itemize}
Note that ``$X$ is $m$-dense'' can be expressed by a $\Sigma_0^{0}$-formula. Then, $m\PH^{2}_{2}$ ($m$-th iterated $\PH^{2}_{2}$) is the following assertion:
\begin{itemize}
 \item[] for any $a$ there exists a $m$-dense set $X$ such that $\min X>a$.
\end{itemize}
We put $HH_{m}:\N\to \N$ as $HH_{m}(x):=\min \{y: (x,y]$ is $m$-dense$\}$.
\end{defi}
However, calculating the increasing speed of $HH_{m}$ is difficult. In order to study $H$-bounds we can use the following result:

\begin{theorem}[Bovykin/ Weiermann \cite{Bovykin}]
The set of $\Pi_2$-consequences of $\WKLo+\RT^{2}_{2}$ coincides with the set of $\Pi_2$-consequences of $\Ii + \bigcup\bp{m\PH^{2}_{2}: m\in \omega}$.
\end{theorem}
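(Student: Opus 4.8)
The plan is to prove the two inclusions between the $\Pi_2$ theories separately. For the inclusion of the $\Pi_2$-consequences of $\Ii+\bigcup_m m\PH^2_2$ into those of $\WKLo+\RT^2_2$ it suffices to derive each axiom of the weaker theory inside $\WKLo+\RT^2_2$. Since $\RCAo\subseteq\WKLo+\RT^2_2$, the scheme $\Ii$ is already available. For each fixed standard $m$, the sentence $m\PH^2_2$ is the $\Pi_2$ assertion that above every $a$ there is an $m$-dense set, and its matrix is $\Sigma^0_0$; I would obtain it from the infinite Ramsey theorem $\RT^2_2$ by a compactness argument, using bounded K\"onig's Lemma (i.e. $\WKLo$) to pass from infinite homogeneous sets to arbitrarily large relatively-large finite ones, and iterating this $m$-many times. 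Because $m$ is external this is a genuine external induction on $m$, so every member of $\bigcup_m m\PH^2_2$ is provable in $\WKLo+\RT^2_2$.

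The reverse inclusion is the substantial half, and I would attack it by an indicator argument in the style of Paris and Kirby. Arguing contrapositively, suppose $\forall x\,\exists y\,\theta(x,y)$, with $\theta$ a bounded formula, is not provable in $\Ii+\bigcup_m m\PH^2_2$; fix a countable model $M\models\Ii+\bigcup_m m\PH^2_2$ and an element $a\in M$ with $M\models\forall y\,\neg\theta(a,y)$. The goal is to exhibit a cut $I$ of $M$ with $a\in I$, together with a family $\mathcal S$ of subsets of $I$ coded in $M$, so that $(I,\mathcal S)\models\WKLo+\RT^2_2$. Granting this, provability of $\forall x\,\exists y\,\theta(x,y)$ in $\WKLo+\RT^2_2$ yields some $y\in I$ with $(I,\mathcal S)\models\theta(a,y)$; since $\theta$ is $\Delta_0$ it is absolute between the initial segment $I$ and $M$, so $M\models\theta(a,y)$, contradicting the choice of $a$.

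The construction of $(I,\mathcal S)$ is where the density hypotheses are consumed. By overspill, the fact that $M\models m\PH^2_2$ for every standard $m$, together with monotonicity of density in $m$, produces a nonstandard $b>a$ such that $(a,b]$ is $m$-dense for all standard $m$ simultaneously. Enumerating the countably many colorings and finitely-branching trees coded in $M$, I would repeatedly shrink $(a,b]$ to a homogeneous sub-interval, each application of the density definition realizing one application of $\RT^2_2$ and lowering the available density level, and then take $I$ to be the resulting cut and $\mathcal S$ a Scott set generated along the way. The Scott-set closure secures $\WKLo$, the extracted homogeneous sets (unbounded in $I$) secure $\RT^2_2$, and the residual regularity of the cut secures the first-order induction it must validate.

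The main obstacle is precisely this indicator lemma: showing that a single sufficiently dense interval supports, \emph{simultaneously}, infinite homogeneous sets for every coloring in $\mathcal S$ (the combinatorial heart, specific to $\RT^2_2$), a second-order part closed under relative computability and K\"onig's Lemma so that $\WKLo$ holds, and enough semiregularity for the cut to be a model of the ambient induction. The quantitative bookkeeping here is what matches the growth of the density functions $HH_m$ to the provably total functions of $\WKLo+\RT^2_2$. As a sanity check on the target theory, note that $\WKLo+\RT^2_2\vdash\BII$, while by the cited Paris--Kirby theorem $\BII$ has the same $\Pi^0_3$ (hence $\Pi_2$) consequences as $\Ii$; thus the only genuinely new $\Pi_2$ content to be captured beyond $\Ii$ is the combinatorial totality encoded by the $HH_m$, and the role of $\bigcup_m m\PH^2_2$ is exactly to assert that totality.
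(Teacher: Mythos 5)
First, a point of reference: the paper does not prove this statement at all --- it is imported from Bovykin and Weiermann as a black box, and the only related argument actually carried out in the paper is Lemma~\ref{lemma: mthWPH}, which handles the analogue of your easy direction ($m\WPH^{2}_{2}$ from $\WKLo+\CAC$). So your proposal can only be measured against the known proof in the literature, and at the level of architecture it reproduces that proof faithfully: one inclusion by formalizing, inside $\WKLo+\RT^{2}_{2}$, the compactness derivation of each fixed $m\PH^{2}_{2}$ (external induction on $m$, bounded K\"onig's Lemma to pass from infinite homogeneous sets to relatively large finite ones); the other by an indicator construction that uses overspill on the density levels to find a nonstandard-density interval $(a,b]$ in a countable model of $\Ii+\bigcup\{m\PH^{2}_{2}:m\in\omega\}$, builds a cut $I$ with $(I,\Cod(M/I))\models\WKLo+\RT^{2}_{2}$, and closes the contrapositive by $\Delta_{0}$-absoluteness across the initial segment. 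The logical frame --- which theory plays which role, why only $\Pi_{2}$ sentences transfer, the sanity check via $\BII$ and Paris--Kirby --- is correct.

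The gap is the one you flag yourself: the indicator lemma is named but not proved, and it is the entire mathematical content of the hard direction. Concretely, what is missing is (i) the bookkeeping showing that a single $d$-dense interval, $d$ nonstandard, supports an $\omega$-length shrinking construction in which stage $n$ consumes one density level to handle the $n$-th $M$-coded coloring while keeping the minima of the $X_{n}$ strictly increasing, so that $I=\sup_{n}\min X_{n}$ is a proper cut with each $X_{n}\cap I$ unbounded in $I$; (ii) the verification that this cut is semiregular, which is what actually yields $\WKLo$ in $(I,\Cod(M/I))$ --- ``Scott-set closure'' is not a free by-product of extracting homogeneous sets, it must either be built into the enumeration or squeezed out of the density hypothesis, and this is precisely where the growth rate of $HH_{m}$ is matched to the provably recursive functions of $\WKLo+\RT^{2}_{2}$; and (iii) in the easy direction, the compactness argument for $m\ge 2$ is genuinely more delicate than for $m=1$, since the tree must code witnesses to the failure of $m$-density recursively through the levels. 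None of these steps is wrong in conception --- they are the standard Paris--Harrington indicator technology and they do go through --- but as written the proposal is an accurate outline of the Bovykin--Weiermann proof rather than a proof.
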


Thanks to the previous result we get the next corollary.
\begin{cor}
\begin{itemize}
\item $HH_{m}$ is a provably recursive function of $\WKLo+\RT^{2}_{2}$.
\item Every provably recursive function of $\WKL+\RT^{2}_{2}$ is bounded by $HH_{m}$ for some $m\in\omega$.
\end{itemize}
\end{cor}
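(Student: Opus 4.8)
The plan is to read both bullets off the Bovykin--Weiermann coincidence theorem together with a proof-theoretic bounding lemma for $\Ii+m\PH^{2}_{2}$. First I would record two elementary facts about $m$-density. Since ``$X$ is $m$-dense'' is $\Sigma^0_0$, the function $HH_{m}(x)=\mu y.\,[(x,y]\text{ is }m\text{-dense}]$ has a $\Delta^0_1$ graph and is computable outright; what is at issue is only its totality. Moreover $m$-density is monotone under supersets (an easy induction on $m$: the base case is $|X|>\min X$, and the inductive step simply restricts a colouring of the larger set to the smaller one). Consequently the totality statement $\forall x\,\exists y\,[(x,y]\text{ is }m\text{-dense}]$ is equivalent over $\RCAo$ to $m\PH^{2}_{2}$: from an $m$-dense $X$ with $\min X>x$ one passes to the interval $(x,\max X]\supseteq X$, which is again $m$-dense by monotonicity.

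For the first bullet it then suffices to see $\WKLo+\RT^{2}_{2}\vdash m\PH^{2}_{2}$. But $m\PH^{2}_{2}$ is a $\Pi_2$ sentence and is trivially among the axioms of $\Ii+\bigcup\bp{m\PH^{2}_{2}:m\in\omega}$, hence is a $\Pi_2$-consequence of that theory; by the coincidence theorem it is a $\Pi_2$-consequence of $\WKLo+\RT^{2}_{2}$ as well. Thus $HH_{m}$ is provably recursive in $\WKLo+\RT^{2}_{2}$.

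For the second bullet, let $f$ be provably recursive in $\WKL+\RT^{2}_{2}$, witnessed by a $\Pi_2$ totality statement $\tau_f$ (say $\forall x\,\exists s\,T(e,x,s)$ for a suitable index $e$). By the coincidence theorem $\tau_f$ is provable in $\Ii+\bigcup\bp{m\PH^{2}_{2}:m\in\omega}$; since proofs are finite and the iterated principles are linearly ordered by strength ($m'\PH^{2}_{2}$ implies $m\PH^{2}_{2}$ for $m\le m'$, again by monotonicity of density), $\tau_f$ is already provable in $\Ii+m\PH^{2}_{2}$ for a single $m$. It remains to bound $f$ by some $HH_{m'}$. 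Here I would invoke the fast-growing analysis of iterated Paris--Harrington: the provably recursive functions of $\Ii+m\PH^{2}_{2}$ are exactly those primitive recursive in $HH_{m}$, and because $HH$ climbs fast enough that one extra density level absorbs a primitive-recursive jump, every such function is dominated by $HH_{m+1}$. Renaming $m+1$ as $m$ gives the claim.

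The main obstacle is precisely this last bounding lemma for $\Ii+m\PH^{2}_{2}$. Everything else is bookkeeping around the cited coincidence theorem, but the statement that the provable $\Sigma_1$-definable functions of $\Ii+m\PH^{2}_{2}$ are cofinally generated by $HH_{m}$ (up to a fixed number of extra levels) rests on the Ketonen--Solovay/Weiermann correspondence between density and the fast-growing hierarchy already exploited in Section~\ref{Section: boundedbounds}. Concretely I would match each density level against a level of the hierarchy so that $HH_{m}$ plays the role of the relevant fast-growing function, and then use the standard fact that $\Ii$ together with the totality of such a function $g$ has all its provably recursive functions dominated by one further iterate of $g$.
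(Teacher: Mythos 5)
Your treatment of the first bullet is correct and is exactly what the paper leaves implicit behind its one-line ``Thanks to the previous result'': $m\PH^{2}_{2}$ is a $\Pi_2$ sentence, trivially a consequence of $\Ii+\bigcup\bp{m\PH^{2}_{2}:m\in\omega}$, hence provable in $\WKLo+\RT^{2}_{2}$ by the coincidence theorem, and the monotonicity of $m$-density under supersets converts this into the totality of $HH_{m}$. The compactness step at the start of the second bullet (a single $m$ suffices) is also fine.

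The gap is in your final bounding lemma for the second bullet. You assert that every provably recursive function of $\Ii+m\PH^{2}_{2}$ --- equivalently, by relativized Parsons, every function primitive recursive in $HH_{m}$ --- is dominated by $HH_{m+1}$, justified by a ``standard fact'' that the provably recursive functions of $\Ii+\Tot(g)$ are dominated by one further iterate of $g$. Both claims are false. For the first: as the paper notes immediately after this corollary (via the Patey--Yokoyama conservation theorem), each $HH_{m}$ is itself primitive recursive, so the functions primitive recursive in $HH_{m}$ are exactly the primitive recursive functions, and no single primitive recursive function $HH_{m+1}$ can dominate them all. For the second: the witnessing theorem for $\Ii+\Tot(g)$ yields domination by the full relativized fast-growing hierarchy built over $g$ --- $\omega$-many further diagonalization levels, not one (already $x\mapsto g^{(x)}(x)$ is outrun by its own diagonalization, which is still primitive recursive in $g$). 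What your argument actually needs is that for every $k$ the $k$-th level of the relativized hierarchy over $HH_{m}$ is absorbed by $HH_{m'}$ for some $m'$ depending on $k$; equivalently, a lower bound showing that each additional density level climbs at least one level of the fast-growing hierarchy, so that $\bp{HH_{m'}:m'\in\omega}$ is cofinal under domination in the functions primitive recursive in $HH_{m}$. This is a genuine combinatorial fact about iterated Paris--Harrington densities: it does not follow from the $\Pi_2$-coincidence theorem as cited (deriving it that way is circular), and it is the real content the corollary defers to Bovykin--Weiermann. To close the gap you would have to prove this lower bound, in the spirit of the Ketonen--Solovay estimates used in Section \ref{Section: boundedbounds}, or cite it explicitly.
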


In a recent work \cite{KeitaPatey}, Ludovic Patey and the second author proved the following:
\begin{thm}
$\WKLo+\RT^{2}_{2}$ is a ${\Pi}^0_3$-conservative extension of $\II$. 
\end{thm}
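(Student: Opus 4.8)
The plan is to combine a model-theoretic conservation theorem for $\RT^2_2$ with the Paris--Kirby theorem quoted above. First I would reduce to a conservation statement over $\BII$. Since $\RT^2_2$ proves $\BII$ over $\RCAo$ (Cholak--Jockusch--Slaman, Hirst), we have the chain $\II\subseteq\RCAo+\BII\subseteq\WKLo+\RT^2_2$, and $\Pi^0_3$-conservativity composes along such a chain. As $\RCAo+\BII$ is already $\Pi^0_3$-conservative over $\II$ (the second-order packaging of Paris--Kirby: the first-order part of $\RCAo$ is $\II$, the first-order part of $\RCAo+\BII$ is $\mathrm{B}\Sigma_2$, and $\mathrm{B}\Sigma_2$ has the same $\Pi^0_3$ consequences as $\mathrm{I}\Sigma_1$), it suffices to prove that $\WKLo+\RT^2_2$ is $\Pi^0_3$-conservative over $\RCAo+\BII$.

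For the latter I would argue contrapositively and build models. Let $\psi\equiv\forall x\,\exists y\,\forall z\,\theta(x,y,z)$ be a $\Pi^0_3$ sentence (with $\theta$ bounded) that is \emph{not} provable in $\RCAo+\BII$, and fix a countable model $(M,\mathcal S)\models\RCAo+\BII+\neg\psi$. The goal is to enlarge the second-order part to some $\mathcal S'\supseteq\mathcal S$ with $(M,\mathcal S')\models\WKLo+\RT^2_2$ while leaving the first-order universe $M$ untouched. Because $\neg\psi$ is arithmetical with (at most) set parameters from $\mathcal S\subseteq\mathcal S'$, it is automatically preserved once $M$ is unchanged, so $(M,\mathcal S')$ witnesses that $\psi$ is unprovable in $\WKLo+\RT^2_2$.

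The enlargement would be carried out by a countable iteration which, at successive stages, (i) adds a path through each infinite subtree of $2^{<M}$ in the current second-order part, supplying $\WKLo$, and (ii) adds an infinite homogeneous set for each colouring $c:[M]^2\to 2$. For step (ii) I would use the $\RCAo$-provable decomposition of $\RT^2_2$ into cohesiveness plus the stable theorem: first drive $\mathcal S'$ to be cohesive, after which every colouring becomes stable relative to a cohesive set, and then adjoin homogeneous sets for stable colourings by Mathias forcing. Throughout, one closes off under $\Delta^0_1$-comprehension (Turing join and reducibility), so that each approximation remains a model of the base comprehension.

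The main obstacle is the preservation requirement. The extension must still satisfy $\Sigma^0_1$-induction, so that it remains a model of $\RCAo$, and, crucially, it must still satisfy $\BII$; otherwise the iteration could verify $\mathrm{I}\Sigma^0_2$ and thereby change the $\Pi^0_3$ theory. Preserving $\BII$ is the technical heart: a generic homogeneous set must not yield a $\Sigma^0_2$-definable unbounded injection contradicting $\Sigma^0_2$-bounding. This is precisely the phenomenon Chong--Slaman--Yang exploited to separate $\RT^2_2$ from $\mathrm{I}\Sigma^0_2$, and the plan is to reuse their control of the (second) jump of Mathias-generic objects --- first-jump control for the stable part together with a lemma that cohesive extensions preserve $\BII$ --- but now uniformly along the whole iteration rather than for a single instance. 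The hard part will be to show that the limit model of this iteration still satisfies $\BII$ and $\Sigma^0_1$-induction simultaneously; once that preservation lemma is in hand, the conservation statement, and with it the theorem, follows. An alternative, more proof-theoretic route would replace the model construction by a finitary Ramsey-type statement with primitive-recursive bounds after the reduction to $\BII$, concluded by an indicator argument in the spirit of the Ketonen--Solovay machinery used elsewhere in this paper.
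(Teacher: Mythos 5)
The paper does not actually prove this theorem: it is quoted as an external result of Patey and the second author \cite{KeitaPatey}, so there is no internal proof to compare against. Judged on its own terms, your proposal contains a genuine gap at exactly the point you yourself flag as ``the technical heart''. The outer reduction is fine: $\RT^2_2$ proves $\BII$ over $\RCAo$, Paris--Kirby gives $\Pi^0_3$-conservativity of $\BII$ over $\Ii$, and conservation composes along the chain, so everything reduces to $\Pi^0_3$-conservativity of $\WKLo+\RT^2_2$ over $\RCAo+\BII$. But the iterated Mathias/cohesive forcing you propose for that remaining step is not an existing argument that merely needs to be ``reused uniformly along the whole iteration''. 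The Chong--Slaman--Yang machinery you invoke is tailored to one carefully constructed nonstandard model of $\BII+\neg\III$ and to adjoining a single solution; making the jump control work over an arbitrary countable model of $\RCAo+\BII$, for every colouring appearing along the way, and then showing that the union of the iteration still satisfies $\II$ and $\BII$ simultaneously, is precisely the problem that was open and that the cited paper resolves --- and it resolves it by a different method.

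Concretely, Patey and Yokoyama do not iterate forcing while preserving $\BII$; they prove a finite, primitive recursively bounded combinatorial statement (a grouping principle, analysed via largeness notions in the Ketonen--Solovay style, the same machinery this paper uses in Section \ref{Section: boundedbounds}) and then run an indicator/cut argument to extract the $\Pi^0_3$-conservation. That is the route you mention only in your closing sentence as an ``alternative''. As written, your main plan reduces the theorem to an unproved preservation lemma that is essentially as hard as the theorem itself, so the proposal does not constitute a proof.
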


Thus, the provably recursive functions of $\WKLo+\RT^{2}_{2}$ are the primitive recursive ones, hence we can conclude that $HH_m$ is primitive recursive. 
Hence we have the following.
\begin{thm}
A  transition relation $R\subseteq S^{2}$ is bounded by $F_{m}$ for some $m \in \omega$ if it is $n$-depth linearly H-bounded for some $n\in\omega$.
\end{thm}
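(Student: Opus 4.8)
The plan is to mirror, step for step, the treatment of the $k$-depth linearly bounded case in Subsection~\ref{subsection: It1}, systematically replacing weak density by density and bounds by $H$-bounds. The argument splits into a purely combinatorial core provable in $\RCAo$ and a proof-theoretic majorization. For the core, I would establish the intermediate statement that \emph{a transition relation which is $n$-depth linearly $H$-bounded is $HH_{n}$-$H$-bounded}, in perfect analogy with the theorem asserting that $k$-depth linearly bounded relations are $WW_{k}$-bounded. Granting this, the theorem follows: by the corollary to the Bovykin--Weiermann result each $HH_{m}$ is provably recursive in $\WKLo+\RT^{2}_{2}$, and by the $\Pi^{0}_{3}$-conservativity of $\WKLo+\RT^{2}_{2}$ over $\Ii$ (Patey--Yokoyama) these are exactly the primitive recursive functions; since the primitive recursive functions are precisely $\bigcup_{m}\Ack_{m}$ and each such function is majorized by $F_{m}$ for some $m\in\omega$, the $HH_{n}$-$H$-bound is dominated by an $F_{m}$-$H$-bound, giving the asserted bound by $F_{m}$.

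For the intermediate statement I would argue by contradiction inside $\RCAo$, exactly as in the $WW_{k}$ theorem. Suppose $R$ is $n$-depth linearly $H$-bounded but there is a transitive decreasing $R$-sequence $X_{n}=\bp{a_{j}:j\in m_{n}}$ with $m_{n}>HH_{n}(a_{0})$. The decisive point---and the reason $H$-bounds are the right notion here---is that the underlying set of a \emph{transitive} decreasing sequence is $R$-homogeneous, which is exactly the hypothesis under which the depth recursion is permitted to split $R$. Put $Y_{n}=(a_{0},m_{n}]$, which is $n$-dense by the choice of $HH_{n}$. Since $X_{n}$ is $R$-homogeneous, the definition of $n$-depth linearly $H$-bounded supplies $T_{0},T_{1}$, both $(n-1)$-depth linearly $H$-bounded, with $T_{0}\cup T_{1}\supseteq R\cap X_{n}^{2}$; colour $[Y_{n}]^{2}$ by letting $P(\bp{i,j})$ be the least $c<2$ for which $a_{i}$ and $a_{j}$ are $T_{c}$-related. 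Applying $n$-density yields a $P$-homogeneous, $(n-1)$-dense $Y_{n-1}\subseteq Y_{n}$, so that $X_{n-1}=\bp{a_{j}:j\in Y_{n-1}}$ is $T_{c}$-homogeneous for a single colour $c$ and the recursion reapplies to the $(n-1)$-depth linearly $H$-bounded relation $T_{c}$. After $n$ rounds one reaches an $F_{0}$-$H$-bounded relation $T^{*}$ together with a $0$-dense, $T^{*}$-homogeneous set $Y_{0}$; the transitive decreasing $T^{*}$-sequence it carries has length $|Y_{0}|>\min Y_{0}$, which contradicts the $F_{0}$-$H$-bound $F_{0}(x)=x+1$.

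The step I expect to be the main obstacle is the bookkeeping that keeps the two homogeneity notions synchronised through the recursion. At each level one must verify that ordinary $P$-homogeneity of the $m$-dense witness $Y$ translates into $T_{c}$-homogeneity of the corresponding \emph{states} $\bp{a_{j}:j\in Y}$, so that the depth hypothesis genuinely applies to $T_{c}$ on a homogeneous set, and that the density index drops by exactly one at each step, so that after $n$ rounds one lands on a truly $0$-dense set facing an $F_{0}$-$H$-bound; in particular one has to track carefully the interplay between the sequence indices $j$ and the state values $a_{j}$ in order to turn $|Y_{0}|>\min Y_{0}$ into the quantitative contradiction. By contrast, the proof-theoretic ingredient, that $HH_{n}$ is primitive recursive and hence below some $F_{m}$, requires no new work: it is already delivered by the conservativity results cited above, so once the combinatorial core is in place the theorem is immediate. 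Finally, since we are bounding transitive decreasing sequences, the output is an $H$-bound by $F_{m}$, which is the bound asserted and which, via Proposition~\ref{prop-dlHb}, is accompanied by the well-foundedness of $R$.
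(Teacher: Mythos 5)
Your proposal follows exactly the route the paper intends: reduce to the combinatorial lemma that an $n$-depth linearly $H$-bounded relation is $HH_{n}$-$H$-bounded (the direct analogue of the $WW_{k}$ theorem of Subsection~\ref{subsection: It1}, with density in place of w-density and with transitive decreasing sequences supplying the $R$-homogeneous sets on which the depth recursion may be applied), and then majorize $HH_{n}$ by some $F_{m}$ via the Bovykin--Weiermann corollary together with the $\Pi^{0}_{3}$-conservativity of $\WKLo+\RT^{2}_{2}$ over $\Ii$. The one point worth making explicit is that, as you correctly observe at the end, this argument controls only transitive decreasing sequences, so what it delivers is an $H$-bound by $F_{m}$ rather than a bound in the sense of Definition~\ref{def:bounds}; that is indeed how the theorem must be read, since $H$-boundedness does not imply boundedness.
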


\section{Open Questions}

In Section \ref{Section: zoo} we proved that the full Termination Theorem is equivalent to the full Weak Ramsey Theorem. Moreover, since the full Weak Ramsey Theorem is provable within $\CAC$ plus full induction, which is strictly weaker than the full Ramsey Theorem for pairs, we answered negatively to \cite[Open Problem 3]{Gasarch}.
However we wonder if these two results have the same verification power, namely if the first order statements which are provable within $\forall k \WRT^2_k$ and within $\forall k \RT^2_k$ are the same. 
\begin{question}
Do $\forall k \WRT^2_k$ and $\forall k \RT^2_k$ have the same first order part? 
\end{question}

The proofs presented in Subsection \ref{subsection: FirstComparison} cannot be carried out within $\RCAs$. We conjecture that such results hold over $\RCAs$. Hence a question we wonder is:
\begin{question}
Does $\WPH^{h,2}_k$ imply $\Tot(\Ack_{\max\bp{k,h}})$ over $\RCAs$?
\end{question}

Eventually, in Subsection \ref{Subection: linearlybounded}, we proved that given a deterministic relation bounded by $F_k$ we can find a transition invariant composed of $(k+2)$-many linearly bounded relations. Moreover given a relation bounded in $F_k$ we provided a transition invariant composed of $(k+2)$-many linearly $H$-bounded relations. Thus we wonder whether such results are improvable:
\begin{question}
Given a relation bounded by $F_k$, what is the minimal number of linearly bounded relations whose union contains the transitive closure of $R$?
\end{question}

\paragraph{Acknowledgment}
The authors would like to thanks Stefano Berardi for his careful readings and his precious remarks. We are grateful to Emanuele Frittaion, Alexander Kreuzer, Alberto Marcone and Sylvain Schmitz  for their helpful suggestions.

\bibliographystyle{plain}
\bibliography{referencesThesis}

\end{document}